\newtheorem{theo}{Theorem}[section]
\newtheorem{lemm}[theo]{Lemma}
\newtheorem{prop}[theo]{Proposition}
\newtheorem{defi}[theo]{Definition}
\newtheorem{remark}[theo]{Remark}
\def\proof {{\noindent \bf{Proof:\hspace{4pt}}}}
\def\endproof{\hfill$\square$\vspace{6pt}}
\numberwithin{equation}{section}
\title{
{\bf\Large  On the management fourth-order Schr\"{o}dinger-Hartree equation}}
\author{{Carlos Banquet} \\
{\small Universidad de C\'{o}rdoba, Departamento de Matem\'{a}ticas y Estad\'{\i}stica}\\
{\small A.A. 354, Monter\'{\i}a, Colombia.}\\
{\small \texttt{E-mail:cbanquet@correo.unicordoba.edu.co}}\vspace{.5cm}\\
{{\'Elder J. Villamizar-Roa}{\thanks{%
Corresponding author.}}}\\
{\small Universidad Industrial de Santander, Escuela de Matem\'{a}ticas}\\
{\small A.A. 678, Bucaramanga, Colombia.} \\
{\small \texttt{E-mail:jvillami@uis.edu.co}}}
\date{}
\begin{document}
\maketitle
%%%%%%%%%%%%%% ABSTRACT %%%%%%%%%%%%%% ABSTRACT %%%%%%%%%%%%%%%%%%%%%%%%%%%%%%%
\begin{abstract}
We consider the Cauchy problem associated to the fourth-order nonlinear Schr\"{o}dinger-Hartree equation with variable dispersion coefficients. The variable dispersion coefficients are 
assumed to be continuous or periodic and piecewise constant in time functions. We prove local and global well-posedness results for initial data in $H^s$-spaces. We also analyze the scaling limit of fast dispersion management and the convergence to a model with averaged dispersions.\newline
{\bf Key words.} dispersion management, fourth-order nonlinear Schr\"{o}dinger-Hartree equation.\\

{\bf AMS subject classifications.} 35Q55; 35A01; 35B40; 35G25.
\end{abstract}

%%%%%%%%%%%%% INTRODUCTION %%%%%%%%%%%%%%% INTRODUCTION %%%%%%%%%%%%%%%%%%%%%%
\section{Introduction}
A canonical model for propagation of intense laser beams in a bulk medium with Kerr nonlinearity is given by the nonlinear Schr\"{o}dinger equation
\begin{equation}\label{sch1}
i\psi_{t}(t,x,y)+\Delta \psi+\vert\psi\vert^2\psi=0.
\end{equation}
Two interesting situations related to model (\ref{sch1}) can be propounded. First, the role of introducing high order dispersion terms in the Schr\"{o}dinger equation (e.g. fourth oder), including models with variable dispersion coefficients, and second, the interactions described by the potential function $V(t,x_1,x_2)$ ($V=\vert \psi\vert^2$ in (\ref{sch1})). In the first case, as described in Fibich {\it et al.} \cite{Fibich}, the traditional derivation of the Schr\"{o}dinger equation in nonlinear optics comes from the nonlinear Helmholtz equation
\[
(\partial_{xx}+\partial_{yy}+\partial_{zz})E(x,y,z)+k^2E(x,y,z)=0,\ k^2=k_0^2\left(1+\frac{4n_2}{n_0}\vert E\vert^2\right),
\]
where $E$ is the electric field, $n_0$ is the index of refraction, $n_2$ is the Kerr coefficient and $k_0$ is wavenumber. Then, separating the slowly varying amplitude from the fast oscillations and changing to the following nondimensional variables
\begin{eqnarray*}
\tilde{x}=\frac{x}{r_0},\ \tilde{y}=\frac{y}{r_0},\ \tilde{t}=\frac{z}{2k_0r_0^2}, \ \psi(\tilde{x},\tilde{y},\tilde{t})=2r_0k_0\sqrt{\frac{n_2}{n_0}}E(x,y,z)e^{-ik_0z},
\end{eqnarray*}
one gets the nondimesional nonlinear Helmholtz equation
\begin{equation}\label{nondH}
\frac{\delta}{4}\psi_{\tilde{t}\tilde{t}}(\tilde{x},\tilde{y},\tilde{t})+i\psi_{\tilde{t}}+\Delta \psi+\vert\psi\vert^2\psi=0,
\end{equation}
where $\Delta=\partial_{\tilde{x}\tilde{x}}+\partial_{\tilde{y}\tilde{y}},$ $\delta=1/r_0^2k_0^2.$ Physically, $r_0$ is much larger than its wavelength $2\pi/k_0$ and therefore $\delta<<1,$ which permits to neglect the term $\psi_{\tilde{t}\tilde{t}}$ in (\ref{nondH}) and obtain the classical Schr\"{o}dinger equation. However, as point out in \cite{Fibich}, the neglected term $\psi_{\tilde{t}\tilde{t}}$ can becomes important, for instance, to prevent the collapse. Thus, we can consider the approximation of $\psi_{\tilde{t}\tilde{t}}\approx-\Delta^2\psi+O(\delta),$ where $\Delta^2$ is the biharmonic operator, in order to obtain
\begin{equation}\label{nondHc}
i\psi_{t}+\Delta \psi+\epsilon \Delta^2\psi+\vert\psi\vert^2\psi=0,\ \epsilon<0.
\end{equation}
Model (\ref{nondHc}) has been considered in a serie of papers, see for instance, \cite{Fibich,GuoCui4,GuoCui3,Guo,GuoCui5,Kar,Banquet} and references there in. Indeed, the nonlinear Schr\"{o}dinger equation with
mixed-dispersion
\begin{equation}\label{INLS}
i\partial _{t}\psi+\alpha \Delta \psi+\beta\Delta^2 \psi+|\psi|^\lambda \psi=0, \ \alpha, \beta=\mbox{constant},
\end{equation}
was initially considered by Karpman \cite{Kar} and Karpman
and Shagalov \cite{KarSha}, and it has been used as a model to
investigate the role played by the higher-order dispersion terms, in
formation and propagation of solitary waves in magnetic materials
where the effective quasi-particle mass becomes infinite. A particular case of (\ref{INLS}) corresponds to the Biharmonic equation 
\[
i\partial _{t}\psi+\beta \Delta^2 \psi+|\psi|^\lambda \psi=0,\ \beta=\mbox{constant,}
\]
which was introduced in \cite{Kar} and \cite{KarSha}, to take into account the role played by the higher
fourth-order dispersion terms in formation and propagation of intense laser beams in a bulk medium with Kerr nonlinearity, see also Ivano and Kosevich  \cite{Ivano}.\newline 

An additional point related to model (\ref{INLS}), also motivated by models in nonlinear optics, corresponds to the  case of dispersion managed $\alpha=\alpha(t),\beta=\beta(t),$ modelling varying dispersion along the fiber, which permits to balance the effects of nonlinearity and dispersion in such a way that stable nonlinear pulses (solitary waves) are supported over long distances (cf. \cite{Agrawal,Saut,Kurtzke, Lushnikov,Lushnikov2, Sulem, Zharnitsky}). See also Carvajal, Panthee and Scialom \cite{Carvajal}, and some references therein, to the case of a third-order nonlinear Schr\"{o}dinger equation with time-dependent coefficients.\newline

The second situation is related to the posible interactions described by the potential $V.$ An interesting interaction is given by the following coupled system
\begin{align}\label{FoSch0}
\left\{
\begin{array}{lc}
i\partial _{t}\psi(t,x)+\Delta \psi(t,x)=V(t,x)\psi(t,x), & x\in \mathbb{R}^{n},\ \ t\in \mathbb{R}, \\
\Delta V(t,x)=-\vert \psi(t,x)\vert^2, &  x\in \mathbb{R}^{n}, \ \ t\in\mathbb{R},
\end{array}
\right.
\end{align}
where $V(t,x)$ is a potential function. If $n\geq 3,$ the potential $V$ can be explicitly written as a solution of the Poisson equation (\ref{FoSch0})$_2$ as
\begin{equation}\label{pot}
V=C_n(\vert x\vert^{-(n-2)}*\vert \psi\vert^2),
\end{equation}
where $C_n$ is a constant which only depends on $n.$ Thus, substituting (\ref{pot}) into the Schr\"{o}dinger equation (\ref{FoSch0})$_1$ we obtain the so called Schr\"{o}dinger-Hartree equation
\begin{equation}\label{FoSch0b}
i\partial _{t}\psi(t,x)+\Delta \psi(t,x)=(|x|^{-(n-2)}*|\psi(t,x)|^{2})\psi(t,x),  x\in \mathbb{R}^{n},\ \ t\in \mathbb{R}.
\end{equation}
The nonlinearity in equation (\ref{FoSch0b}) has been generalized by considering the Hartree type nonlinearity $(|\cdot|^{-\lambda}*|\psi|^{2})\psi,$ $\lambda>0,$ which is relevant to describing several physical phenomena, as for instance, the dynamics of the mean-field limits of many-body quantum systems such as coherent states and condensates, the quantum transport in semiconductors superlattices, the study of mesoscopic structures in Chemistry, among others (cf. \cite{Elgart, Lieb,Yu}). From the mathematical point of view, some significative results on well-posedness in energy spaces has been obtained in \cite{Gao,Miao,Cho} and references therein.\newline

Based on the previous considerations, in this paper we study the Cauchy problem associated to  the following fourth-order Schr\"{o}dinger-Hartree equation 
with variable dispersion coefficients
\begin{equation}\label{FoSch}
\left\{
\begin{array}{lc}
i\partial _{t}u+\alpha(t)\Delta u+\beta(t) \Delta^2u+\theta(|x|^{-\lambda}*|u|^{2})u=0, & x\in \mathbb{R}^{n},\ \ t\in \mathbb{R}, \\
u(x,t_0)=u_{0}(x), &  x\in \mathbb{R}^{n},
\end{array}
\right.
\end{equation}
where the unknown $u(x,t)$ is a complex-valued function in space-time $
\mathbb{R}^n\times \mathbb{R}, n\geq 1,$ and  $u_0$ denotes the initial
data in $t_0\in\mathbb{R}.$

The coefficients $\alpha,\beta$ are real-valued functions which represent the variable dispersion coefficients. 
The constant $\theta\neq 0$ is a real coefficient which denotes the focusing or defocousing behavior (when diffraction and nonlinearity are working against or with each other). The nonlinearity coefficient $\lambda>0.$\newline

The general IVP (\ref{FoSch}) has not been considered in the literature. Thus, in this paper, we are interested in studying the well-posedness issues for the IVP (\ref{FoSch}) for given data based in the $L^2$-Sobolev spaces and, $\alpha,\beta$ continuous or piecewise constant periodic functions. The novelty of our results is summarizes in the following aspects: For initial data $u_0\in H^s(\mathbb{R}^n),$ $s\geq \mbox\{0,\lambda/2-2\}$ and $0<\lambda< n,$ we prove the existence of local in time solution $u\in C([-T+t_0,T+t_0];H^s(\mathbb{R}^n)).$ The proof is based on $L^p_tL^q_x$ properties of the linear propagator, as well as the Hardy-Littlewood-Sobolev inequality which allow us to control the Hartree nonlinearity. For initial data in $L^2,$ by using the conserved quantity $\Vert u(t)\Vert_{L^2(\mathbb{R}^n)}=\Vert u_0\Vert_{L^2(\mathbb{R}^n)}$ we are able to extend the local solution globally. We also prove the existence of global solution in $H^1$ by combining the  $L^2$-conservative law, the local well-posedness in $H^1$ and argument of blow up alternative. If the nonlinearity is given by $\theta |u|^{2}u,$ we also analyze the existence of global solution in $H^s,$ $s\geq 0.$
Finally, we will address the scaling limit to fast dispersion management, that is, for each $\epsilon>0,$ we consider the $\epsilon$-scaled fourth-order nonlinear Schr\"{o}dinger equation by making $\beta_\epsilon(t)=\beta(\frac{t}{\epsilon}),$ $\alpha_\epsilon(t)=\alpha(\frac{t}{\epsilon}),$ and then, we analyze the scaling limit $\epsilon\rightarrow 0^+$ of the solutions.  \newline

This article is organized as follows. In Section 2, we establish some linear estimates which are fundamental for obtaining our results of local and global mild solutions. In Section 3, we prove the existence of local solutions in $H^s$ for $s\geq \lambda/2$. In Section 4, we analyze the existence of local solutions in $H^s$ for $ \mbox\{0,\lambda/2-2\}\leq s<\lambda/2.$ In Section 5, we prove some results of global existence. Finally, in Section 6, we give a result about the scaling limit to fast dispersion management.

\section{Linear propagator}
Before studying the nonlinear Cauchy problem we
give some properties of the linear problem associated to (\ref{FoSch}), which is given by
\begin{equation}\label{LinFoScha}
\left\{
\begin{array}{lc}
i\partial _{t}u+\alpha(t)\Delta u+\beta(t) \Delta^2u=0, & x\in \mathbb{R}^{n},\ \ t\in \mathbb{R}, \\
u(x,t_0)=u_{0}(x), &   x\in \mathbb{R}^{n}.
\end{array}
\right.
\end{equation}
For $\alpha$ and $\beta$ being integrable functions, we define the {\it cumulative dispersions}
$A(t_0,t)$ and $B(t_0,t)$ on the closed interval $[t_0,t]$ by
$$A(t,r)=\int_{r}^{t}\alpha(\tau)d\tau \ \ \text{and}\ \ B(t,r)=\int_{r}^{t}\beta(\tau)d\tau.$$
We denote by $U_{\alpha, \beta}(t,t_0)$ the linear propagator which describes the solution $u(x,t)$ of (\ref{LinFoScha}). It holds that
\[
 U_{\alpha,\beta}(t,t_0) u_0(x)=
\left[e^{-i\vert\xi\vert^2A(t,t_0)+i\vert\xi\vert^4B(t,t_0)}\widehat{u_0}(\xi)\right]^{\vee}(x).
\]
Then, for any $t,r,l \in \mathbb{R},$ it holds
\begin{equation}\label{grup1a}
U_{\alpha,\beta}(t,r)=U_{\alpha,\beta}(t,l)U_{\alpha,\beta}(l,r)
\end{equation}
and
\begin{equation}\label{grup1ba}
U_{\alpha,\beta}(t,r)=U_{\alpha,\beta}(r,t)^{-1}=U_{-\alpha,-\beta}(r,t).
\end{equation}
We will use the notation $U(t,t_0):=U_{\alpha,\beta}(t,t_0)$ and $U(t):=U(t,0).$ Then, (\ref{grup1a})-(\ref{grup1ba}) imply that
\[U(t,r)=U(t)U(r)^{-1}.\]
For each $s\in \mathbb{R},$ the propagator $U(t,t_0)$ is an isometry on $H^s(\mathbb{R}),$ that is, for any $f\in H^s(\mathbb{R})$ it holds
\begin{equation}\label{grup2a}
\|U(t,t_0)f\|_{H^s}=\|f\|_{H^s}=\Vert \langle\xi\rangle^s\hat{f}(\xi)\Vert_{L^2(\mathbb{R}^n)}.
\end{equation}
However, $U(t,r)\neq U(t-r,0),$ since
\begin{eqnarray*}
\int_r^t\alpha(\tau)d\tau=\int_0^{t-r}\alpha(\tau+r)d\tau\neq\int_0^{t-r}\alpha(\tau)d\tau,\ \int_r^t\beta(\tau)d\tau=\int_0^{t-r}\beta(\tau+r)d\tau\neq\int_0^{t-r}\beta(\tau)d\tau,
\end{eqnarray*}
unless $\alpha,\beta$ be constant functions. Thus, $U(t,r)$ is not a group.
\begin{lemm}\label{VanCorpus1a} Let $\phi(\xi)=b\vert\xi\vert^4+a\vert\xi\vert^2,$ $\xi\in \mathbb{R}^n,$ $a,b\in\mathbb{R}$ and $b\neq 0.$ For each $f\in \mathscr{S}(\mathbb{R}^n),$ consider the operator
\[\mathcal{L}f(x)=\int_{\mathbb{R}^n}e^{i\phi(\xi)+ix\cdot \xi}\widehat{f}(\xi)d\xi=(f*I_{\phi})(x),\]
where
\[I_{\phi}(x)=\int_{\mathbb{R}^n}e^{i\phi(\xi)+ix\cdot \xi} d\xi.\]
Then, for $0\leq \theta \leq 1,$ $\frac 1p+\frac 1{p'}=1$ and $\frac 1p=\frac{1-\theta}{2}$ it holds
\[\|\mathcal{L}f\|_{L^p(\mathbb{R}^n)}\leq C |b|^{-\frac{n\theta}{4}}\|f\|_{L^{p'}(\mathbb{R}^n)}.\]
\end{lemm}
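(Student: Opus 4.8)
The plan is to establish the two endpoint bounds $L^{2}\to L^{2}$ and $L^{1}\to L^{\infty}$ and then interpolate. Since $\mathcal L$ is the Fourier multiplier with symbol $e^{i\phi(\xi)}$ and $|e^{i\phi(\xi)}|=1$, Plancherel's theorem gives $\|\mathcal Lf\|_{L^{2}}\le C\|f\|_{L^{2}}$ with $C$ depending only on $n$ (in particular independent of $a,b$); this is the case $\theta=0$, where $\tfrac1p=\tfrac12$. For $\theta=1$ I would use the convolution structure $\mathcal Lf=f*I_{\phi}$ together with Young's inequality, $\|\mathcal Lf\|_{L^{\infty}}\le\|I_{\phi}\|_{L^{\infty}}\|f\|_{L^{1}}$, so the whole matter reduces to the kernel decay estimate
\[
\|I_{\phi}\|_{L^{\infty}}\le C|b|^{-n/4}.
\]
Granting this, the pairs $(L^{2},L^{2})$ and $(L^{1},L^{\infty})$ correspond to $\theta=0$ and $\theta=1$, and Riesz--Thorin interpolation produces an operator $L^{p'}\to L^{p}$ with $\tfrac1p=\tfrac{1-\theta}{2}$ and norm at most $C\,(|b|^{-n/4})^{\theta}=C|b|^{-n\theta/4}$, which is exactly the claim; the relation $\tfrac1p+\tfrac1{p'}=1$ then holds automatically.

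Everything thus rests on the kernel estimate, which I would obtain by scaling followed by an oscillatory-integral bound. Substituting $\xi=|b|^{-1/4}\eta$ in $I_{\phi}(x)=\int_{\mathbb R^{n}}e^{i(b|\xi|^{4}+a|\xi|^{2}+x\cdot\xi)}\,d\xi$ extracts the factor $|b|^{-n/4}$ and normalizes the leading coefficient to $\pm1$, leaving
\[
I_{\phi}(x)=|b|^{-n/4}\int_{\mathbb R^{n}}e^{i(\pm|\eta|^{4}+c|\eta|^{2}+y\cdot\eta)}\,d\eta,\qquad c=a|b|^{-1/2},\ \ y=|b|^{-1/4}x.
\]
It therefore suffices to bound $J(c,y)=\int_{\mathbb R^{n}}e^{i(\pm|\eta|^{4}+c|\eta|^{2}+y\cdot\eta)}\,d\eta$ by a constant \emph{uniformly} in $c\in\mathbb R$ and $y\in\mathbb R^{n}$, since that uniformity is what keeps the final constant independent of $a,b$.

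The uniform bound on $J(c,y)$ is the \textbf{main obstacle}. In one dimension it is immediate from van der Corput's lemma: the phase $g(s)=\pm s^{4}+cs^{2}+ys$ satisfies $|g^{(4)}(s)|\equiv 24$, and the van der Corput estimate with a fourth-derivative lower bound controls the integral by a constant independent of the interval and of the lower-order coefficients $c,y$, which is precisely the $-1/4$ scaling. In $n$ dimensions the polynomial part is radial, so I would pass to polar coordinates $\eta=\rho\omega$ and integrate out the sphere, writing the angular average of $e^{i\rho\, y\cdot\omega}$ as a Bessel function $(\rho|y|)^{-(n-2)/2}J_{(n-2)/2}(\rho|y|)$; this reduces $J$ to a single radial oscillatory integral in $\rho$ with phase $\pm\rho^{4}+c\rho^{2}$ and a slowly varying amplitude. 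Splitting into the regions $\rho|y|\lesssim1$ and $\rho|y|\gtrsim1$, invoking the standard Bessel asymptotics, and applying van der Corput (now with a monotone amplitude handled by an integration by parts) yields the bound uniformly in $c$ and $|y|$. The delicate point is carrying all constants through these regions so that they remain independent of $c$ and $y$; alternatively one may quote the dispersive estimate of Ben-Artzi, Koch and Saut for fourth-order Schr\"odinger operators, of which this kernel bound is exactly the time-frozen form. Once $\sup_{c,y}|J(c,y)|\le C$ is in hand, the kernel estimate, and hence the lemma, follow.
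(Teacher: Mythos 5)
Your overall skeleton is exactly the paper's: Plancherel for the $L^2\to L^2$ endpoint, the kernel bound $\|I_{\phi}\|_{L^\infty}\leq C|b|^{-n/4}$ combined with the trivial convolution inequality for the $L^1\to L^\infty$ endpoint, Riesz--Thorin to conclude, and van der Corput on the fourth derivative (which is constant, hence uniform in the lower-order coefficients) for $n=1$; your preliminary rescaling $\xi=|b|^{-1/4}\eta$ is only a cosmetic reorganization of the paper's direct application of van der Corput with $|h^{(4)}(\xi)|=24|b|$. The one genuine divergence is the case $n\geq 2$. The paper disposes of it by citing Theorem 2 of Cui on oscillatory integrals with real elliptic polynomial phases whose top-degree part $b|\xi|^4$ has nondegenerate Hessian, applied directly to the unscaled phase; you propose either a self-contained polar-coordinates/Bessel argument or a citation of the Ben-Artzi--Koch--Saut dispersive estimate for fourth-order Schr\"odinger operators. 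The citation route is sound: after your rescaling, what is needed is $\sup_{c,y}|J(c,y)|\leq C$, and the Ben-Artzi--Koch--Saut bound $|I(t,\cdot)|\lesssim|t|^{-n/4}$, uniform in $t$ for each fixed $\epsilon\in\{0,\pm1\}$, is by the same scaling equivalent to uniformity in the normalized coefficient $c=\pm\epsilon|t|^{1/2}$, so it delivers exactly the statement you need. With that branch your proof is correct; it simply trades the paper's reference for an essentially equivalent one, with the small advantage that your scaling step makes the $|b|$-dependence transparent.

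Be aware, however, that your primary self-contained sketch is not merely ``delicate'' --- as described it would fail. In the region $\rho|y|\lesssim 1$ the radial amplitude is $\rho^{n-1}$ times a bounded factor, and van der Corput with an amplitude costs $\|\psi\|_{L^\infty}+\|\psi'\|_{L^1}\sim|y|^{-(n-1)}$ over the interval $[0,1/|y|]$, which is not uniform in $y$ (note also that the kernel integral is only conditionally defined, so crude splitting must be justified). In the region $\rho|y|\gtrsim 1$ the Bessel oscillation $e^{\pm i\rho|y|}$ enters the phase, producing stationary points whose contributions must be tracked uniformly in both $c$ and $y$. Handling these two issues is the entire content of the Cui and Ben-Artzi--Koch--Saut results; it cannot be dispatched by ``van der Corput plus an integration by parts'' in a few lines. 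So the honest form of your proof is: same skeleton as the paper, with Cui's theorem replaced by the Ben-Artzi--Koch--Saut estimate, and the Bessel sketch discarded or expanded into a genuine stationary-phase analysis.
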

\proof From Plancherel's Theorem we get
\begin{equation}\label{L2L2a}
\|\mathcal{L}f\|_{L^2(\mathbb{R}^n)}=\|f\|_{L^2(\mathbb{R}^n)}.
\end{equation}
Assuming for a moment that
\begin{equation}\label{EstIphia}
|I_{\phi}(x)|\leq C|b|^{-\frac{n}{4}},
\end{equation}
we can conclude that
\begin{equation}\label{LinffL1a}
\|\mathcal{L}f\|_{L^{\infty}(\mathbb{R}^n)}\leq C|b|^{-\frac{n}{4}}\|f\|_{L^1(\mathbb{R}^n)}.
\end{equation}
Thus, the result follows directly from (\ref{L2L2a}), (\ref{LinffL1a}) and the Riesz-Thorin interpolation Theorem. In order to finish the proof, we just have to show (\ref{EstIphia}). We begin taking $n=1.$ Define $h(\xi)=a\vert\xi\vert^2+b\vert\xi\vert^4+x\xi.$ Since $|h^{(4)}(\xi)|=24\vert b\vert,$ by using Van der Curput's lemma, we arrived at
\[|I_{\phi}(x)|\leq C|b|^{-\frac{1}{4}}.\]
The result for $n\geq 2$ can be obtained from Theorem 2 in Cui \cite{Cui}, taking $\mu=0,$ $m=4,$ $P(\xi)=\phi(\xi),$ $q=p$ and $p=p'.$  Indeed, it is clear that $\left(\frac{1}{p'},\frac{1}{p}\right)\in\Delta_0,$ where we use the notation $\Delta_0$ to denote the region in the $(\frac{1}{p}, \frac{1}{q})$ plane occupied by the quadrilateral $R_0P_0BQ_0,$ comprising the apices $R_0=(\frac12,\frac12),$  $B=(1,0)$ and all edges $P_0B, BQ_0, P_0R_0,$ and $Q_0R_0$, but not comprising the apices $P_0=(\frac23,0)$ and $Q_0=(1,\frac13).$
Also, $\phi$ is a real elliptic polynomial, with $\phi(0)=0,$ $\mathrm{deg}(\phi)=4;$ moreover, denoting by $P_4(\xi)=b|\xi|^4,$ we obtain that the Hessian 
\[HP_4(\xi)=3b^n4^n\left(\sum_{i=1}^{n}\xi_i^2\right)^n, \]
for $\xi\in \mathbb{R}^n\setminus\{0\},$ is nondegenerate, which implies the desired result for $n\geq 2.$
\endproof

Next lemmas will be useful in order to estimate the nonlinearity in (\ref{FoSch}).
\begin{lemm}[Hardy inequality]\cite{KatoLib} \label{MaxSti}Let $0<\lambda <n.$ Then there exists $c=c(n,\lambda)>0$ such that for all $f\in{\dot{H}^{\frac{\lambda}{2}}},$
\[\||x|^{-\lambda}*|f|^{2}\|_{L^{\infty}}\leq C(n,\lambda) \|f\|^2_{\dot{H}^{\frac{\lambda}{2}}}.\]
\end{lemm}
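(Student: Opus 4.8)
The plan is to deduce the estimate from the translation invariance of the homogeneous Sobolev norm together with the classical fractional Hardy inequality centered at the origin. Observe first that for every fixed $x\in\mathbb{R}^n$ one has, after the change of variables $w=x-y$,
\[
(|x|^{-\lambda}*|f|^2)(x)=\int_{\mathbb{R}^n}\frac{|f(y)|^2}{|x-y|^\lambda}\,dy=\int_{\mathbb{R}^n}\frac{|g_x(w)|^2}{|w|^\lambda}\,dw,\qquad g_x(w):=f(x-w).
\]
Since $\widehat{g_x}(\xi)=e^{-ix\cdot\xi}\,\widehat{f}(-\xi)$, we have $|\widehat{g_x}(\xi)|=|\widehat{f}(-\xi)|$ and hence $\|g_x\|_{\dot H^{\lambda/2}}=\|f\|_{\dot H^{\lambda/2}}$ for every $x$. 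Thus it suffices to prove the \emph{pointwise} Hardy inequality
\[
\int_{\mathbb{R}^n}\frac{|g(w)|^2}{|w|^\lambda}\,dw\le C(n,\lambda)\,\|g\|_{\dot H^{\lambda/2}}^2,\qquad 0<\lambda<n,
\]
and then take the supremum over $x$, which yields the claimed $L^\infty$ bound with a constant independent of $x$.

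For the weighted $L^2$ estimate I would pass to Fourier variables. Using that, for $0<\lambda<n$, the Fourier transform of $|x|^{-\lambda}$ equals $c_{n,\lambda}\,|\xi|^{\lambda-n}$ as a tempered distribution, the left-hand side rewrites as the bilinear Fourier integral
\[
\int_{\mathbb{R}^n}\frac{|g(w)|^2}{|w|^\lambda}\,dw
=c_{n,\lambda}\int_{\mathbb{R}^n}\int_{\mathbb{R}^n}\frac{\widehat g(\xi)\,\overline{\widehat g(\eta)}}{|\xi-\eta|^{\,n-\lambda}}\,d\xi\,d\eta.
\]
Writing $\widehat g(\xi)=|\xi|^{-\lambda/2}F(\xi)$ with $F(\xi):=|\xi|^{\lambda/2}\widehat g(\xi)$ (so that $\|F\|_{L^2}=\|g\|_{\dot H^{\lambda/2}}$), the integral becomes a Stein--Weiss (weighted Hardy--Littlewood--Sobolev) form with weights $|\xi|^{-\lambda/2}$, $|\eta|^{-\lambda/2}$, kernel exponent $n-\lambda$, and $p=q=2$; the scaling balance $\tfrac12+\tfrac12+\tfrac{\lambda/2+\lambda/2+(n-\lambda)}{n}=2$ and the admissibility condition $\lambda/2<n/2$ are both equivalent to $0<\lambda<n$, and Stein--Weiss returns $C\|F\|_{L^2}^2=C\|g\|_{\dot H^{\lambda/2}}^2$. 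Equivalently, one may simply invoke the classical fractional Hardy inequality as recorded in \cite{KatoLib}, the hypothesis $0<\lambda<n$ being exactly what guarantees local integrability of the weight $|w|^{-\lambda}$.

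The delicate point is this weighted step rather than the translation reduction: plain Hardy--Littlewood--Sobolev combined with the Sobolev embedding $\dot H^{\lambda/2}\hookrightarrow L^{2n/(n-\lambda)}$ would only place $|x|^{-\lambda}*|f|^2$ at the \emph{endpoint} mapping $L^{n/(n-\lambda)}\to L^\infty$ of the Riesz potential $I_{n-\lambda}$, and this endpoint fails. This is precisely why the argument must exploit the weight $|w|^{-\lambda}$ directly through the Hardy/Stein--Weiss mechanism, where the symmetric splitting $\lambda=\tfrac{\lambda}{2}+\tfrac{\lambda}{2}$ of the homogeneity is essential. Finally, the passage from the formal Fourier pairing to a rigorous identity, and the endpoints $\lambda=0,n$ that must be excluded, are handled by approximating $f$ by Schwartz functions and using the density of $\mathscr S(\mathbb{R}^n)$ in $\dot H^{\lambda/2}(\mathbb{R}^n)$.
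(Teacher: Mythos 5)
Your proof is correct, but note that the paper does not actually prove this lemma: it is stated as a known result and simply cited from the literature (the reference \cite{KatoLib}), so there is no internal argument to compare against. What you supply is the standard self-contained proof, and it is sound: the reduction of the $L^\infty$ bound to the origin-centered fractional Hardy inequality via translation/reflection invariance of the $\dot H^{\lambda/2}$ norm is exactly right ($\widehat{g_x}(\xi)=e^{-ix\cdot\xi}\widehat f(-\xi)$, and $|\xi|^{\lambda}$ is even, so the norm is preserved uniformly in $x$); the passage to the Fourier-side bilinear form with kernel $|\xi-\eta|^{-(n-\lambda)}$ uses the correct distributional identity $\widehat{|x|^{-\lambda}}=c_{n,\lambda}|\xi|^{\lambda-n}$ valid precisely for $0<\lambda<n$; and the Stein--Weiss application with weights $|\xi|^{-\lambda/2},|\eta|^{-\lambda/2}$, exponents $p=q=2$, satisfies all the hypotheses (scaling balance, $\lambda/2<n/2$, $0<n-\lambda<n$) exactly when $0<\lambda<n$. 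Your remark about why the naive route fails is also a genuine insight worth keeping: Sobolev embedding $\dot H^{\lambda/2}\hookrightarrow L^{2n/(n-\lambda)}$ followed by Hardy--Littlewood--Sobolev would land at the forbidden endpoint $q=\infty$ of the Riesz potential, which is precisely why the weight must be split symmetrically as $\lambda=\frac{\lambda}{2}+\frac{\lambda}{2}$ and absorbed into the two copies of $f$; this is the same mechanism that makes the paper's Lemma \ref{HLSineq} insufficient here and explains why the authors need Lemma \ref{MaxSti} as a separate tool. The only detail you leave compressed is the limiting argument from Schwartz functions to general $f\in\dot H^{\lambda/2}$, but since the integrand $|f(y)|^2|x-y|^{-\lambda}$ is nonnegative, Fatou's lemma along an approximating sequence closes it routinely, and density of $\mathscr{S}$ in $\dot H^{\lambda/2}$ holds because $\lambda/2<n/2$.
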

%The next lemma will be used repeatedly in this work; its proof can be found in Lieb and Loss \cite{LiebLoss}.
\begin{lemm}[Hardy-Littlewood-Sobolev] \cite{LiebLoss}\label{HLSineq}
Let $0< \lambda<n ,$ $1< p<q<\infty$ with $\frac{1}{q}=\frac{1}{p}+\frac{\lambda}{n}-1.$ Then it holds
\[
\| |x|^{-\lambda}*f\|_{L^q(\mathbb{R}^n)}\leq C\|f\|_{L^p(\mathbb{R}^n)}.
\]
Moreover, if $l,r>1$ such that $\frac{1}{r}+\frac{1}{l}+\frac{\lambda}{n}=2$ and $f\in L^r, g\in L^l,$ then
\[
\left\vert\int_{\mathbb{R}^n}\int_{\mathbb{R}^n} f(x)|x-y|^{-\lambda}g(y)dxdy\right\vert\leq C\|f\|_{L^r}\|g\|_{L^l}.
\]
\end{lemm}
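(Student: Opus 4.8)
The plan is to prove the single-function estimate first and then deduce the bilinear form by duality, since the two assertions are equivalent. Write $I_\lambda f(x) = (|x|^{-\lambda}*f)(x) = \int_{\mathbb{R}^n}|x-y|^{-\lambda}f(y)\,dy$. The heart of the matter is that the kernel $|z|^{-\lambda}$ belongs to the weak space $L^{n/\lambda,\infty}(\mathbb{R}^n)$: indeed $|\{z:|z|^{-\lambda}>t\}| = |\{|z|<t^{-1/\lambda}\}| = c_n\,t^{-n/\lambda}$, so convolution against it should behave like Young's inequality with the borderline exponent $n/\lambda$, yielding precisely $1/q = 1/p + \lambda/n - 1$.

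First I would establish a weak-type $(p,q)$ bound by the truncation method. Split the kernel at a radius $R>0$ as $|z|^{-\lambda} = K_{\le R} + K_{>R}$, where $K_{\le R}=|z|^{-\lambda}\mathbf 1_{\{|z|\le R\}}$ and $K_{>R}$ is its complement. Because $0<\lambda<n$ one has $K_{\le R}\in L^1$ with $\|K_{\le R}\|_{L^1}=cR^{n-\lambda}$, and from the hypothesis $1/q>0$ one checks $\lambda p'>n$, so $K_{>R}\in L^{p'}$ with $\|K_{>R}\|_{L^{p'}}=cR^{n/p'-\lambda}$ where the exponent $n/p'-\lambda<0$. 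Young's inequality then gives $\|K_{\le R}*f\|_{L^p}\le cR^{n-\lambda}\|f\|_{L^p}$, while H\"older gives $\|K_{>R}*f\|_{L^\infty}\le cR^{n/p'-\lambda}\|f\|_{L^p}$. For fixed $t>0$ I would choose $R$ so that the $L^\infty$-part is at most $t/2$; then $\{|I_\lambda f|>t\}\subset\{|K_{\le R}*f|>t/2\}$, and Chebyshev's inequality applied to the $L^p$-bound of the first piece yields $|\{|I_\lambda f|>t\}|\le (C\|f\|_{L^p}/t)^q$ after substituting the chosen $R$. The bookkeeping of exponents is the step that must be done with care: one uses the identity $q = np'/(\lambda p'-n)$, which is exactly the scaling relation, so that the powers of $\|f\|_{L^p}/t$ collapse to $q$.

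Having the weak-type estimate in hand for every admissible pair, I would upgrade it to the strong $L^p\to L^q$ bound via the Marcinkiewicz interpolation theorem: choosing $p_0<p<p_1$ with associated exponents $q_0\ne q_1$ determined by the same relation, the operator $I_\lambda$ (which is positive, hence sublinear) satisfies the weak-type bounds at the two endpoints, and interpolation produces the strong-type bound in the open range $1<p<q<\infty$. The endpoints $p=1$ and $q=\infty$ are genuinely excluded, in agreement with the statement.

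Finally, the bilinear inequality follows by duality. Writing the double integral as $\langle f, I_\lambda g\rangle$ and applying H\"older's inequality, $\bigl|\int f\,(I_\lambda g)\,\bigr|\le \|f\|_{L^r}\|I_\lambda g\|_{L^{r'}}$; applying the just-proved mapping property with $p=l$ and $q=r'$ requires $1/r'=1/l+\lambda/n-1$, which is precisely the hypothesis $1/r+1/l+\lambda/n=2$. I expect the interpolation step to be the main conceptual obstacle, since the truncation argument only delivers weak-type control, while the cleanest route to strong type passes through Marcinkiewicz; keeping track of the scaling exponents so that they match $q$ is the most error-prone computation.
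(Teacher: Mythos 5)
Your proof is correct, but note that the paper contains no proof of this lemma to compare against: it is quoted as a known result from Lieb and Loss \cite{LiebLoss}, so the only meaningful comparison is with the proof in that cited source. Your argument is the classical Stein route through fractional integration: the truncation $|z|^{-\lambda}=K_{\le R}+K_{>R}$ gives a weak-type $(p,q)$ bound (your exponent bookkeeping checks out: $q<\infty$ is equivalent to $\lambda p'>n$, so $K_{>R}\in L^{p'}$, and optimizing $R$ so that the $L^\infty$ piece is $t/2$ makes the powers collapse to $q=np'/(\lambda p'-n)$); the upgrade to strong type by off-diagonal Marcinkiewicz interpolation is legitimate because the scaling relation is affine in $(1/p,1/q)$, so for any admissible $p\in(1,\tfrac{n}{n-\lambda})$ one can choose $p_0<p<p_1$ on the same line with $q_0\ne q_1$ and $p_i<q_i$; and the bilinear form then follows from H\"older with exponents $r,r'$, where the hypothesis $1/r+1/l+\lambda/n=2$ together with $r>1$ guarantees that $(l,r')$ is an admissible pair for the operator bound. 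The proof in \cite{LiebLoss} goes in the opposite order and by different means: Lieb and Loss establish the bilinear inequality first, by an elementary argument based on the layer-cake representation of $f$, $g$, and the kernel (reducing everything to characteristic functions of level sets and optimizing), with no interpolation theorem at all, and the convolution-operator bound is then read off by duality. Your approach buys generality and reusability (the same weak-type/Marcinkiewicz machinery handles any kernel in weak $L^{n/\lambda}$); the Lieb--Loss approach buys self-containedness and is the natural starting point for the sharp-constant version via rearrangement, which interpolation arguments cannot reach.
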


%\begin{lemm}[Hardy-Littlewood-Sobolev] \label{HLSineq}
%Let $0< \lambda<n ,$ $1< p<q<\infty$ with $\frac{1}{q}=\frac{1}{p}+\frac{\lambda}{n}-1.$ Then, we have
%\begin{eqnarray}
%\| |x|^{-\lambda}*f\|_{L^q(\mathbb{R}^n)}\leq C\|f\|_{L^p(\mathbb{R}^n)}.\label{hardy1}
%\end{eqnarray}
%Moreover, if $l,r>1$ such that $\frac{1}{r}+\frac{1}{l}+\frac{\lambda}{n}=2$ and $f\in L^r, g\in L^l,$ then
%\begin{eqnarray}\label{hardy2}
%\left\vert\int_{\mathbb{R}^n}\int_{\mathbb{R}^n} f(x)|x-y|^{-\lambda}g(y)dxdy\right\vert\leq C\|f\|_{L^r}\|g\|_{L^l}.\label{hardy1}
%\end{eqnarray}
%\end{lemm}
\begin{lemm}\cite{Kato} \label{LeibRule}For any $s\geq 0,$ we have
\[ \Vert D^s(uv)\Vert_{L^r}\apprle  \Vert D^su\Vert_{L^{p_1}} \Vert v\Vert_{L^{q_2}}+ \Vert v\Vert_{L^{q_1}} \Vert D^sv\Vert_{L^{p_2}},\]
where $D^s=(-\Delta)^{s/2}$ and $\frac{1}{r}=\frac{1}{p_1}+\frac{1}{q_2}=\frac{1}{q_1}+\frac{1}{p_2},$ $p_i\in(1,\infty),$ $q_i\in(1,\infty],$ for $i=1,2.$
\end{lemm}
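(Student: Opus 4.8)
The statement is the Kato--Ponce fractional Leibniz rule, and the plan is to prove it through a Littlewood--Paley paraproduct decomposition that places the fractional derivative $D^s$ on whichever factor carries the higher frequency. Let $\Delta_j$ denote the dyadic frequency projection onto $|\xi|\sim 2^j$ and $S_j=\sum_{k\le j}\Delta_k$ the low-frequency cutoff. Starting from Bony's decomposition
\[
uv=\sum_{j}S_{j-2}u\,\Delta_j v+\sum_{j}\Delta_j u\,S_{j-2}v+\sum_{|j-k|\le 2}\Delta_j u\,\Delta_k v=:\Pi_1+\Pi_2+\Pi_3,
\]
I would bound $\|D^s\Pi_i\|_{L^r}$ for $i=1,2,3$ separately. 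Since $\Pi_1$ and $\Pi_2$ are symmetric under interchanging $u\leftrightarrow v$, it suffices to treat $\Pi_2$ and $\Pi_3$.

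For the high--low paraproduct $\Pi_2=\sum_j\Delta_j u\,S_{j-2}v$, each summand has frequency support in $|\xi|\sim 2^j$, so $D^s$ acts there essentially as the scalar factor $2^{js}$. First I would write $D^s\Pi_2$ as a bilinear operator and, after renormalizing each dyadic block, identify its symbol with a Coifman--Meyer multiplier; the Coifman--Meyer multiplier theorem then yields
\[
\|D^s\Pi_2\|_{L^r}\lesssim\Big\|\big(\textstyle\sum_j|2^{js}\Delta_j u|^2\big)^{1/2}\Big\|_{L^{p_1}}\,\|v\|_{L^{q_2}}.
\]
The square function is comparable to $\|D^s u\|_{L^{p_1}}$ by the Littlewood--Paley characterization of Bessel-potential norms (valid for $p_1\in(1,\infty)$), while the low-frequency factor is absorbed into $\|v\|_{L^{q_2}}$ through the Fefferman--Stein vector-valued maximal inequality applied to the pieces $S_{j-2}v$. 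By symmetry $\Pi_1$ is controlled by $\|u\|_{L^{q_1}}\|D^s v\|_{L^{p_2}}$, which produces the two terms on the right-hand side.

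The diagonal term $\Pi_3=\sum_{|j-k|\le 2}\Delta_j u\,\Delta_k v$ is the delicate piece, since neither factor dominates in frequency and the output is no longer frequency-localized. Here I would split the output into dyadic pieces $\Delta_m\Pi_3$ and use that each product block has frequency $\lesssim 2^j$, so that $D^s$ contributes at most $2^{ms}\lesssim 2^{js}$; the hypothesis $s\ge 0$ is exactly what lets one sum the resulting geometric series over $m\le j$ without loss. Estimating by Cauchy--Schwarz in the overlapping indices and then applying Hölder together with the square-function characterization reduces $\|D^s\Pi_3\|_{L^r}$ to $\|D^s u\|_{L^{p_1}}\|v\|_{L^{q_2}}$ (or the symmetric product, whichever is more convenient).

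The main obstacle I anticipate is twofold: verifying the Coifman--Meyer symbol bounds uniformly across the dyadic blocks, and pushing the estimates through the full admissible range of exponents, in particular allowing $q_i=\infty$ while keeping $p_i\in(1,\infty)$. The endpoint behaviour of the vector-valued maximal inequality, and the summation of the high--high interactions when the output sits at low frequency, are where the regularity assumption $s\ge 0$ and the precise Hölder relations $\frac1r=\frac1{p_1}+\frac1{q_2}=\frac1{q_1}+\frac1{p_2}$ must be used most carefully; this is precisely the content established in the cited work of Kato \cite{Kato}, which we invoke directly.
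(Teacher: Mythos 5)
You should first be aware that the paper contains no proof of this lemma: it is stated purely as a quotation of Kato \cite{Kato}, and the authors never argue it internally, so there is no in-paper argument to compare yours against. Your Littlewood--Paley sketch is therefore a genuinely different, more self-contained route, and it is the standard modern one for Kato--Ponce type estimates: Bony decomposition, treatment of the two paraproducts by bilinear (Coifman--Meyer) multiplier bounds together with square-function and Fefferman--Stein maximal estimates, and a frequency-summation argument for the high--high piece. What your approach buys is transparency about where each hypothesis enters --- the two H\"older relations $\frac1r=\frac1{p_1}+\frac1{q_2}=\frac1{q_1}+\frac1{p_2}$ are used once per paraproduct, and the sign of $s$ matters only in the diagonal term --- at the price of invoking nontrivial harmonic-analysis machinery that the paper sidesteps entirely by citing the literature.

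Two concrete soft spots in your outline. First, your assertion that ``$s\ge 0$ is exactly what lets one sum the geometric series over $m\le j$'' fails at the endpoint: $\sum_{m\le j}2^{ms}\sim 2^{js}$ requires $s>0$, and at $s=0$ that series diverges; the case $s=0$ must be split off and handled by H\"older's inequality alone (where the statement is trivial). Second, the stated exponent range allows $q_i=\infty$ and even $r<1$ (e.g.\ $p_1=q_2=3/2$ gives $r=3/4$), and at these endpoints the Littlewood--Paley characterization of $L^r$ and the vector-valued maximal inequality are not available in their textbook form; handling them needs the refined bilinear theory (Grafakos--Oh type extensions to quasi-Banach targets and $L^\infty$ inputs). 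These are precisely the points at which you fall back on ``invoking \cite{Kato} directly,'' so your proposal is an outline of a known proof rather than a complete argument --- which, to be fair, is no less than what the paper itself provides.
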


%The next lemma will be used repeatedly in this work; its proof can be found in Lieb and Loss \cite{LiebLoss}.

\subsection{Linear propagator with piecewise constant dispersion}
%In this subsection we establish some Strichartz estimates related to the linear propagator in the case when $\alpha(t)$ and $\beta(t)$ are piecewise constant.
%Without loss of generality we consider that $T_1<t_2<T_2.$ Notice that
%\begin{eqnarray*}
%\mathbb{R}=\bigcup_{m\in\mathbb{Z}}(mT_1,mT_1+t_1]\cup (mT_1+t_1,(m+1)T_1]\cup((m+1)T_1,(m+1)t_2]\cup((m+1)t_2,(m+1)T_2].
%\end{eqnarray*}
In this subsection we establish some Strichartz estimates related to the linear propagator with $\alpha$ and $\beta$ piecewise constant functions of kind
\begin{equation}\label{alpha_beta}
\alpha(t)=\left\{
\begin{array}{lc}
\alpha^+,\ \ 0<t\leq t_+, \\
-\alpha^{-}, \ \ t_+-T_1<t\leq 0,
\end{array}
\right.
\ \ \beta(t)=\left\{
\begin{array}{lc}
\beta^+,\ \ 0<t\leq \tau_+, \\
-\beta^{-}, \ \ \tau_+-T_2<t\leq 0,
\end{array}
\right.
\end{equation}
where $\alpha^+,\alpha^-,\beta^+$ and $\beta^-$ are positive constants, $t_+\in (0,T_1),$ $\tau_+\in(0,T_2),$ $\alpha(t+T_1)=\alpha(t),$ and $\beta(t+T_2)=\beta(t)$ for all $t\in \mathbb{R}$ (see Figure 1).\newline
\\
\vspace{0.1cm}
\begin{minipage}{\linewidth}
\begin{center}
{\includegraphics[scale=1.0]{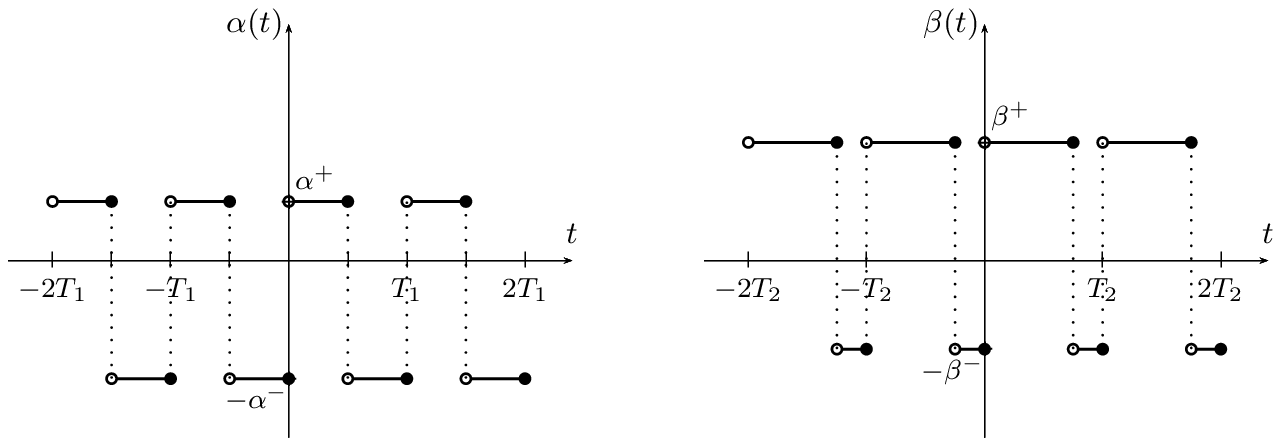}}\\%%%width=6.0in
\end{center}
\end{minipage}
\begin{center}
\vspace{0.1cm}
\small{{\bf Fig. 1}{ Sketch of the dispersion functions} }
 \end{center}
Without loss of generality we assume that $T_2=1.$ Then, we can split the real line as follows
\begin{equation}\label{descomp}
\mathbb{R}=\bigcup_{m\in\mathbb{Z}}(m,m+\tau_+]\cup (m+\tau_+,m+1].
\end{equation}
We have the following estimate:
\begin{lemm}\label{VanCorput2}
Let $\tau_+\in(0,1)$ and $t,r\in(m,m+\tau_+]$ or $t,r\in(m+\tau_+,m+1],$ for $m$ an integer. Then, for $r\neq t,$ it holds
\[
\|U(t,r)f\|_{L^p(\mathbb{R}^n)}\leq C|t-r|^{-\frac{n\theta }{4}} \| f\|_{L^{p'}(\mathbb{R}^n)},
\]
where $0\leq \theta \leq 1,$ $\frac 1p+\frac 1{p'}=1$ and $\frac 1p=\frac{1-\theta}{2}.$
\end{lemm}
\proof The proof is similar to the proof of Lemma \ref{VanCorpus1a} by taking $a=A(t,r)=\int_{r}^{t}\alpha(\tau)d\tau$ and $b=B(t,r)=\int_{r}^{t}\beta(\tau)d\tau.$
Note that if $t, r\in (m,m+\tau_+]$ or $t,r\in(m+\tau_+,m+1]$ we have $b=\pm(t-r)\beta^{\pm};$ consequently,
\[|I_{\phi}(x)|\leq C|t-r|^{-\frac{n}{4}}.\]
\endproof
%\begin{remark}
%Observe that, as a consequence of the Van der Curput's lemma, the proof of Lemma \ref{VanCorput2} continues true if we only assume that  $\alpha$ and $\beta$  are integrable on $(0,T_1)$ and $(0,T_2),$ respectively.  
%\end{remark}
\begin{defi}
A pair $(p,q)$ is said {\it admisible} if 
\begin{eqnarray*}
\left\{\begin{array}{lr}
        2\leq p<\frac{2n}{n-4} & \text{if}\  n\geq5\\
        2\leq p<\infty & \text{if}\  n=4\\
        2\leq p \leq\infty & \text{if}\  n\leq 3
        \end{array}\right\} \ \ \text{and}\ \ \frac {4}{q}={n}\left(\frac12-\frac {1}{p}\right).
\end{eqnarray*} 
\end{defi}
The linear propagator $U(t,r)$ satisfies some Strichartz estimates on each time-interval $(m,m+\tau_+]$ and $(m+\tau_+,m+1]$. More exactly, we have the following result.
\begin{prop}\label{pr1a} Let $m\in\mathbb{Z},$ $\tau_+\in(0,1)$ be given. Then, for each admisible pairs $(q,p),$ $(q_1,p_1),$ $(q_2,p_2),$ and each time-interval $I_m\subset (m,m+\tau_+]$ or $I_m\subset (m+\tau_+,m+1],$ it holds:
\begin{enumerate}
 \item There exists $C_1=C_1(p,I_m)>0$ such that for $t_0\in I_m$ and for any $f\in L^2(\mathbb{R}^n)$ it holds
%\begin{equation}\label{IneLin1a}
%\|U(t,r)f\|_{L^2(I_m;L^2(\mathbb{R}^n))}\leq C\vert I_m\vert^{1/2}\|f\|_{L^2(\mathbb{R}^n)},
%\end{equation}
\begin{equation}\label{IneLin2a}
\|U(t,t_0)f\|_{L^q(I_m;L^p(\mathbb{R}^n))}\leq C\|f\|_{L^2(\mathbb{R}^n)}.
\end{equation}
\item There exists $C_2=C_2(p_1,p_2,I_m)>0$ such that for $t_0\in I_m$ and any $g\in {L^{q'_2}(I_m;L^{p'_2})}$ it holds
\begin{equation}\label{StriSti1}
\left\Vert \int_{I_m\cap \{\tau\leq t\}} U(t,\tau)g(\tau)d\tau\right\Vert_{L^{q_1}(I_m;L^{p_1}(\mathbb{R}^n))}\leq C_2 \Vert g\Vert_{L^{q'_2}(I_m;L^{p'_2})}.
\end{equation}
\end{enumerate}
\end{prop}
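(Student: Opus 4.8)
The plan is to run the standard $TT^\ast$ Strichartz machinery on each constant-coefficient subinterval, feeding in the dispersive decay of Lemma~\ref{VanCorput2} together with the $L^2$-isometry (\ref{grup2a}), and then to supply the Hardy--Littlewood--Sobolev inequality for the time integration and the Christ--Kiselev lemma for the causal truncation. The crucial structural point --- and the reason the statement is localized to a single $I_m\subset(m,m+\tau_+]$ or $I_m\subset(m+\tau_+,m+1]$ --- is that Lemma~\ref{VanCorput2} only provides the decay $\|U(t,\tau)f\|_{L^p}\lesssim|t-\tau|^{-n\theta/4}\|f\|_{L^{p'}}$ when $t$ and $\tau$ lie in the same interval on which $\beta$ is constant; restricting to such an $I_m$ (with $t_0\in I_m$) keeps every time argument appearing below inside one constant-coefficient region, so the decay is available throughout. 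I also record two exponent facts used repeatedly: combining $\tfrac1p=\tfrac{1-\theta}2$ with admissibility gives $n\theta/4=2/q$, so the dispersive decay rate matches the Strichartz time exponent; and every admissible pair satisfies $q\in(2,\infty]$, the endpoint $q=2$ (attained at $p=\tfrac{2n}{n-4}$ when $n\ge5$, at $p=\infty$ when $n=4$, and never attained for $n\le3$) being excluded. Thus we work entirely in the non-endpoint regime.

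For part (1) I would argue by $TT^\ast$. Writing $T_mf=U(\cdot,t_0)f$, the estimate (\ref{IneLin2a}) is equivalent, by duality, to the boundedness of $T_mT_m^\ast g(t)=\int_{I_m}U(t,\tau)g(\tau)\,d\tau$ from $L^{q'}(I_m;L^{p'})$ into $L^{q}(I_m;L^{p})$, where I have used $U(\tau,t_0)^\ast=U(t_0,\tau)$ (from (\ref{grup1ba})) and the composition law (\ref{grup1a}) to collapse $U(t,t_0)U(t_0,\tau)=U(t,\tau)$. Minkowski's inequality and the dispersive bound of Lemma~\ref{VanCorput2} then give, for fixed $t$, $\|T_mT_m^\ast g(t)\|_{L^p}\lesssim\int_{I_m}|t-\tau|^{-2/q}\|g(\tau)\|_{L^{p'}}\,d\tau$, and taking the $L^q_t$-norm and invoking the one-dimensional Hardy--Littlewood--Sobolev inequality (Lemma~\ref{HLSineq} with $n=1$ and $\lambda=2/q$, admissible precisely because $0<2/q<1$, i.e.\ $q>2$) yields $\|T_mT_m^\ast g\|_{L^qL^p}\lesssim\|g\|_{L^{q'}L^{p'}}$. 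The remaining value $q=\infty$ (the case $p=2$, $\theta=0$) carries no decay and is handled directly by the isometry (\ref{grup2a}).

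For part (2) I would first establish the \emph{untruncated} inhomogeneous estimate and then restore the causal cutoff. Dualizing the homogeneous bound (\ref{IneLin2a}) for the pair $(q_2,p_2)$ gives $\big\|\int_{I_m}U(t_0,\tau)g(\tau)\,d\tau\big\|_{L^2}\lesssim\|g\|_{L^{q_2'}(I_m;L^{p_2'})}$; setting $\psi=\int_{I_m}U(t_0,\tau)g(\tau)\,d\tau$ and using $\int_{I_m}U(t,\tau)g(\tau)\,d\tau=U(t,t_0)\psi$ (again by (\ref{grup1a})), the homogeneous bound for $(q_1,p_1)$ gives $\big\|\int_{I_m}U(t,\tau)g(\tau)\,d\tau\big\|_{L^{q_1}L^{p_1}}\lesssim\|\psi\|_{L^2}\lesssim\|g\|_{L^{q_2'}L^{p_2'}}$. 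To pass from this global-in-time operator to the retarded operator $\int_{I_m\cap\{\tau\le t\}}$ of (\ref{StriSti1}), I would apply the Christ--Kiselev lemma, whose hypothesis $q_2'<q_1$ holds automatically here: since $q_1,q_2>2$ one has $q_2'<2<q_1$. This yields (\ref{StriSti1}) and completes part (2).

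The main obstacle is conceptual rather than computational: because $U(t,r)$ is only a two-parameter family and not a group, the dispersive estimate is genuinely local in time, so one must keep every time argument inside a single constant-coefficient interval $I_m$ and may not chain estimates across the switching instants $m$ and $m+\tau_+$. Once this localization is respected, the argument is the classical one. A secondary point requiring care is the exponent bookkeeping --- verifying $n\theta/4=2/q$ and $q\in(2,\infty]$ --- which is exactly what guarantees that the Hardy--Littlewood--Sobolev step and the Christ--Kiselev step both operate in the non-endpoint range; the constants thereby acquire a harmless dependence on $p$, $p_1$, $p_2$ and on the length of $I_m$, as recorded in the statement.
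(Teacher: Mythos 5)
Your part (1) coincides with the paper's argument: both run the duality/$TT^\ast$ scheme of Tomas, feed in the dispersive bound of Lemma \ref{VanCorput2}, and close with the one-dimensional Hardy--Littlewood--Sobolev inequality, using the exponent identity $n\theta/4=2/q$; your separate treatment of the case $q=\infty$, $p=2$ (where there is no decay and HLS is unavailable) is a small extra care the paper leaves implicit. Part (2) is where you genuinely diverge. The paper never leaves the retarded operator: it proves the diagonal retarded bound $L^{q_1'}L^{p_1'}\to L^{q_1}L^{p_1}$ (the HLS step tolerates the causal truncation because the kernel bound $|t-\tau|^{-2/q}$ is pointwise and positive), proves the retarded bound into $L^\infty(I_m;L^2)$ by unitarity together with the dual homogeneous estimate, interpolates between these two, and finally dualizes to reach the mixed pair --- the classical non-endpoint interpolation-plus-duality argument. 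You instead prove the \emph{untruncated} estimate by factoring the operator through $L^2$: the dual homogeneous bound for $(q_2,p_2)$ gives $\|\psi\|_{L^2}\lesssim\|g\|_{L^{q_2'}L^{p_2'}}$ and the homogeneous bound for $(q_1,p_1)$ applied to $U(t,t_0)\psi$ finishes; then you invoke the Christ--Kiselev lemma to restore the cutoff $\{\tau\le t\}$, correctly verifying its hypothesis $q_2'<2<q_1$ from the fact that every admissible pair here has $q>2$. Both routes are valid in this strictly non-endpoint setting. Yours is shorter and makes the $L^2$-factorization structure transparent, at the price of importing Christ--Kiselev, a nontrivial external lemma the paper does not use; the paper's route is self-contained (only interpolation and duality on the retarded operator itself) and produces directly the truncated estimate, which is the form actually consumed by the Duhamel term in the subsequent well-posedness proofs.
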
 

%Under the assumptions, the points $t,r$ are in the same time-interval $I_m$ corresponding to either a focusing or defocusing step, we have, $A(r,t)=\pm (t-r)\alpha^{\pm}$ %and  $B(r,t)=\pm (t-r)\beta^{\pm}.$ Thus, inequality (\ref{IneLin1a}) follows from Plancherel's identity and the definition of $U(t,r)$. 
\proof  In order to prove (\ref{IneLin2a}) we use a duality argument. For that, it is enough to show that for any $g\in {L^{q'}(I_m;L^{p'}({\mathbb{R}^n})})
$ it holds
\[
\left| \int_{I_m} \int_{\mathbb{R}^n} g(x,t)\overline{U(t,t_0)f(x)}dxdt\right|\leq C\|f\|_{L^2(\mathbb{R}^n)}\|g\|_{L^{q'}(I_m;L^{p'}({\mathbb{R}^n}))}.
\]
From Fubini's Theorem, Cauchy-Schwarz inequality and Plancherel's identity, we arrived at
\[
\left| \int_{I_m} \int_{\mathbb{R}^n}g(x,t)\overline{U(t,t_0)f(x)}dxdt\right|\leq C\|f\|_{L^2(\mathbb{R}^n)}\left\|\int_{I_m}{U}^{-1}(t,t_0)g(\cdot, t)dt\right\|_{L^2(\mathbb{R}^n)}.
\]
%where
%\[\widetilde{U}(t,r)g(x,t)=U_{-\alpha,-\beta}(t,r)g(x,t).\]
Following the arguments in Tomas \cite{tomas}, p.477, we get
\begin{align}\label{IneTho1}
\left\|\int_{I_m}{U}^{-1}(t,t_0)g(\cdot, t)dt\right\|^2_{L^2(\mathbb{R}^n)}&=\int_{\mathbb{R}^n}\int_{I_m}
\overline{g(x,t)}\int_{I_m}U(t,\tau)g(x,\tau)d\tau dt dx\notag\\
&\leq \|g\|_{L^{q'}(I_m;L^{p'}(\mathbb{R}^n))}\left\|\int_{I_m}U(\tau,t)g(\cdot, \tau)d\tau\right\|_{L^q(I_m;L^{p}(\mathbb{R}^n))}.
\end{align}
In order to conclude the proof it is enough to show that
\begin{equation}\label{IneTho2}
\left\|\int_{I_m}U(\tau,t)g(\cdot,\tau)d\tau\right\|_{L^{q}(I_m;L^{p}(\mathbb{R}^n))}\leq C \|g\|_{L^{q'}(I_m;L^{p'}(\mathbb{R}^n))}.
\end{equation}
%For $t,\tau \in I_n$ we define
%\[
% I_{t,\tau}(x)=
%\int_{\mathbb{R}^n}e^{-i\xi^2[A(t,r)-A(\tau,r)]+i\xi^4[B(t,r)-B(\tau,r)]+ix\cdot\xi}d\xi.
%\]
%Since $|\frac{d}{dt}A(t,r)|=|\alpha(t)|\geq \delta_0:=\min\{\alpha^+,\alpha^-\}>0$ and $|\frac{d}{dt}B(t,r)|=|\beta(t)|\geq \delta_1:=\min\{\beta^+,\beta^-\}>0$ for all $t\in I_n,$ from the Mean Value Theorem we get
%\[
%|A(t,r)-A(\tau,r)|\geq \delta_0|t-\tau|,
%\]
%and
%\[
%|B(t,r)-B(\tau,r)|\geq \delta_1|t-\tau|.
%\]
Note that for $t,r,\tau \in I_m,$ we have $|B(t,r)-B(\tau,r)|= \beta^{\pm}|t-\tau|,$ then by Fubini's Theorem, Lemma \ref{VanCorput2} and the Hardy-Littlewood-Sobolev inequality, we have
\begin{eqnarray}
\left\|\int_{I_m}U(\tau,t)g(\cdot,\tau)d\tau\right\|_{L^{q}(I_m;L^{p}(\mathbb{R}^n))}&\leq & \left\|\int_{I_m}\|U(\tau,t)g(\cdot,\tau)\|_{L^{p}(\mathbb{R}^n)}d\tau\right\|_{L^{q}(I_m)}\nonumber\\
&\leq & \left\|\int_{I_m}\frac{\|g(\cdot,\tau)\|_{L^{p'}(\mathbb{R}^n)}}{|t-\tau|^{\frac{n\theta}{4}}}d\tau\right\|_{L^{q}(I_m)}
\leq C \|g\|_{L^{q'}(I_m;L^{p'}(\mathbb{R}^n))}.\nonumber
\end{eqnarray}
%From Lemma \ref{VanCorpus1a} and the Hardy-Littlewood-Sobolev inequality, we have
%\[
%\left\|\int_{I_n}\|g(\cdot,\tau)*I_{\tau,t}(\cdot)\|_{L^{p}(\mathbb{R}^n)}d\tau\right\|_{L^{q}(I_n)}
%\leq \left\|\int_{I_n}\frac{\|g(\cdot,\tau)\|_{L^{p'}(\mathbb{R}^n)}}{|t-\tau|^{\frac{n\theta}{4}}}d\tau\right\|_{L^{q}(I_n)}
%\leq C \|g\|_{L^{q'}(I_n;L^{p'}(\mathbb{R}^n))}.
%\]
%If $L=\Delta^2,$ from Lemma \ref{VanCorpus2} and the Hardy-Littlewood-Sobolev inequality, we also obtain
%\[
%\left\|\int_{\mathbb{R}}\|g(\cdot,\tau)*I_{\tau,t}(\cdot)\|_{L_x^{p}}d\tau\right\|_{L_t^{q}}
%\leq \left\|\int_{\mathbb{R}}\frac{\|g(\cdot,\tau)\|_{L_x^p}}{|t-\tau|^{\frac{\theta(2n-d)}{4}}}d\tau\right\|_{L_t^{q}}
%\leq C \|g\|_{L_t^{q'}L_x^{p'}}.
%\]
Now, from (\ref{IneTho1}) and (\ref{IneTho2}) we obtain
\[
\left\|\int_{I_m}U^{-1}(t,t_0)g(\cdot,t)dt\right\|_{L^{2}(\mathbb{R}^n)}\leq C \|g\|_{L^{q'}(I_m;L^{p'}(\mathbb{R}^n))}.
\]
This finishes the proof of (\ref{IneLin2a}).

Now we prove (\ref{StriSti1}). By hypothesis, the points $(p_2,q_2)$ and $(p_1,q_1)$ are in the segment of the line connecting $P=\left(\frac{1}{2},0\right)$ with $Q=\left( \frac{1}{p(n)}, \frac{n}{8}-\frac{1}{4p(n)}\right).$ Then, $p(n)=\infty$ if $n=1,2,3,4$ and, $P(n)=\frac{2n}{n-4}$ if $n\geq 5.$ Therefore, without loss of generality we can assume that $p_2\in [2,p_1).$ This implies that $q_2\in [q_1,\infty).$ Combining inequalities (\ref{IneTho1})-(\ref{IneTho2}) we arrive at
\[
\left\|\int_{t_0}^tU(t,\tau)g(\cdot,\tau)d\tau\right\|_{L^{q_1}(I_m;L^{p_1}(\mathbb{R}^n))}\leq C \|g\|_{L^{q_1'}(I_m;L^{p_1'}(\mathbb{R}^n))}.
\]
and 
\begin{align*}
\sup_{t\in I_m}\left\|\int_{t_0}^tU(t,\tau)g(\cdot,\tau)d\tau\right\|_{L^{2}(\mathbb{R}^n)}&=\sup_{t\in I_m}\left\|U(t,t_0)\int_{t_0}^tU(t_0,\tau)g(\cdot,\tau)d\tau\right\|_{L^{2}(\mathbb{R}^n)}\notag \\
&= \sup_{t\in I_m}\left\|\int_{t_0}^tU(t_0,\tau)g(\cdot,\tau)d\tau\right\|_{L^{2}(\mathbb{R}^n)}\leq C \|g\|_{L^{q_1'}(I_m;L^{p_1'}(\mathbb{R}^n))}.
\end{align*}
From the last estimates and an interpolation argument we have
\[
\left\|\int_{t_0}^tU(t,\tau)g(\cdot,\tau)d\tau\right\|_{L^{q_2}(I_m;L^{p_2}(\mathbb{R}^n))}\leq C \|g\|_{L^{q_1'}(I_m;L^{p_1'}(\mathbb{R}^n))}.
\]
From the last inequality and an argument of duality, we obtain
\[
\left\|\int_{t_0}^tU(t,\tau)g(\cdot,\tau)d\tau\right\|_{L^{q_1}(I_m;L^{p_1}(\mathbb{R}^n))}\leq C \|g\|_{L^{q_2'}(I_m;L^{p_2'}(\mathbb{R}^n))},
\]
which yields the result. 
\endproof

\subsection{Linear propagator with continuous dispersion}
In this section we analyze the propagator associated to the linear problem (\ref{LinFoScha}), where the dispersion functions $\alpha, \beta\in C([-T+ t_0 , t_0 + T])$ such that $\beta(t)\neq 0$ for all $t\in \mathbb{R}.$ Below, we establish some Strichartz estimates related to the linear propagator $U(t,r)$. From now on, we will use the notation $$\Vert f\Vert_{L_T^{q}L_x^{p}}:=\Vert f\Vert_{L^q([-T+t_0, t_0 + T];L^p(\mathbb{R}^n))},\ T>0.$$
 
 \begin{prop}\label{pr1} For each admisible pairs $(q,p),$ $(q_1,p_1),$ $(q_2,p_2),$ it holds:
\begin{enumerate}
 \item There exists $C_1=C_1(p)>0$ such that for any $t\in [-T+t_0, t_0 + T]$ and $f\in L^2(\mathbb{R}^n)$ it holds
%\begin{equation}\label{IneLin1a}
%\|U(t,r)f\|_{L^2(I_m;L^2(\mathbb{R}^n))}\leq C\vert I_m\vert^{1/2}\|f\|_{L^2(\mathbb{R}^n)},
%\end{equation}
\begin{equation}\label{IneLinCont}
\|U(t,r)f\|_{L_t^qL_x^p}\leq C\|f\|_{L^2(\mathbb{R}^n)}.
\end{equation}
\item There exists $C_2=C_2(p_1,p_2)>0$ such that for any $F\in {L^{q'_2}(I;L^{p'_2})},$ $I=[-T+t_0, t_0 + T],$ it holds
\begin{equation}\label{StriSti2}
\left\Vert \int_{I} U(t,t_0)F(\tau)d\tau\right\Vert_{L^{q_1}_TL_x^{p_1}}\leq C_2 \Vert F\Vert_{L_T^{q'_2}L_x^{p'_2}}.
\end{equation}
\end{enumerate}
\end{prop}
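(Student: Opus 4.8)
The plan is to reduce everything to the single dispersive bound
\[
\|U(t,r)f\|_{L^p(\mathbb{R}^n)}\leq C|t-r|^{-\frac{n\theta}{4}}\|f\|_{L^{p'}(\mathbb{R}^n)},\qquad t\neq r,\ t,r\in I,
\]
valid now on the \emph{whole} interval $I=[-T+t_0,t_0+T]$, and then to run exactly the duality ($TT^*$) and interpolation arguments already carried out in the proof of Proposition~\ref{pr1a}. The only genuinely new input, compared with the piecewise-constant case, is that this decay estimate must hold globally in time rather than merely on each interval of constancy; once it is established, the chain (\ref{IneTho1})--(\ref{IneTho2}) together with the Hardy--Littlewood--Sobolev inequality (Lemma~\ref{HLSineq}) transfers verbatim.

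First I would extract the consequences of the standing hypothesis that $\beta\in C(I)$ with $\beta(t)\neq 0$ for all $t$. Since $\beta$ is continuous and never vanishes on the compact interval $I$, the intermediate value theorem forces $\beta$ to keep a constant sign on $I$, and $|\beta|$ attains a strictly positive minimum $\beta_0:=\min_{t\in I}|\beta(t)|>0$. Hence, for $t,r\in I$,
\[
|B(t,r)|=\left|\int_r^t\beta(\tau)\,d\tau\right|=\int_{\min(r,t)}^{\max(r,t)}|\beta(\tau)|\,d\tau\geq \beta_0\,|t-r|.
\]
Applying Lemma~\ref{VanCorpus1a} with $a=A(t,r)$ and $b=B(t,r)$ (observe $b\neq0$ when $t\neq r$) gives
\[
\|U(t,r)f\|_{L^p(\mathbb{R}^n)}\leq C|B(t,r)|^{-\frac{n\theta}{4}}\|f\|_{L^{p'}(\mathbb{R}^n)}\leq C\beta_0^{-\frac{n\theta}{4}}|t-r|^{-\frac{n\theta}{4}}\|f\|_{L^{p'}(\mathbb{R}^n)},
\]
which is the desired global dispersive estimate; the same bound holds for $U(r,t)=U(t,r)^{-1}$ because $|B(r,t)|=|B(t,r)|$.

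With this in hand, estimate (\ref{IneLinCont}) follows by duality exactly as in Proposition~\ref{pr1a}: it suffices to prove
\[
\left\|\int_I U(\tau,t)g(\cdot,\tau)\,d\tau\right\|_{L_T^qL_x^p}\leq C\|g\|_{L_T^{q'}L_x^{p'}},
\]
and, after bounding the $L^p_x$-norm of the integrand by the dispersive estimate, the remaining time integral $\int_I |t-\tau|^{-n\theta/4}\|g(\cdot,\tau)\|_{L^{p'}_x}\,d\tau$ is controlled in $L^q_t$ by Lemma~\ref{HLSineq}. Admissibility is precisely what makes the exponents match: since $\frac1p=\frac{1-\theta}{2}$ one has $\frac12-\frac1p=\frac\theta2$, so $\frac4q=n\bigl(\frac12-\frac1p\bigr)=\frac{n\theta}{2}$, i.e. $\frac1q=\frac{n\theta}{8}$, and then $\frac1q=\frac1{q'}+\frac{n\theta}{4}-1$, which is exactly the Hardy--Littlewood--Sobolev balance for the one-dimensional kernel $|t-\tau|^{-n\theta/4}$ (the endpoint $p=2$, $\theta=0$, $q=\infty$ being handled directly by the $L^2$-isometry (\ref{grup2a})). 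Combined with (\ref{IneTho1})--(\ref{IneTho2}) this yields (\ref{IneLinCont}).

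Finally, for the inhomogeneous estimate (\ref{StriSti2}) I would repeat the interpolation-and-duality scheme from the end of the proof of Proposition~\ref{pr1a}, using the group relation (\ref{grup1a}) and the $H^s$-isometry (\ref{grup2a}) to split the $L^2$-endpoint from the $(q_1,p_1)$-endpoint and then interpolate to reach a general admissible pair $(q_2,p_2)$. The main obstacle of the whole argument is concentrated in the dispersive step: everything hinges on promoting the decay estimate from the subintervals of constancy (where it came for free in Lemma~\ref{VanCorput2}) to the full interval $I$, and this is made possible solely by the hypothesis $\beta(t)\neq0$, which simultaneously fixes the sign of $\beta$ and furnishes the uniform lower bound $\beta_0>0$. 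Once the global decay estimate is secured, no new difficulty arises and the Strichartz estimates follow by the same reasoning as in the piecewise-constant setting.
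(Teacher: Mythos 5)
Your proposal is correct and follows essentially the same route as the paper: the hypothesis $\beta\in C(I)$, $\beta\neq 0$ yields the uniform lower bound $|B(t,r)|\geq \delta_1|t-r|$ (the paper asserts this directly, while you justify the constant sign via the intermediate value theorem), which promotes Lemma \ref{VanCorpus1a} to a global-in-time dispersive estimate, and then the $TT^*$ duality argument with the Hardy--Littlewood--Sobolev inequality, followed by the interpolation-and-duality step for the inhomogeneous bound, is carried over verbatim from Proposition \ref{pr1a}. Your explicit verification of the exponent balance $\frac{1}{q}=\frac{n\theta}{8}$ and the treatment of the endpoint $\theta=0$ via the $L^2$-isometry are details the paper leaves implicit, but they do not change the argument.
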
 
\proof In order to proof (\ref{IneLinCont}) we again use a duality argument. Thus, it is enough to show that for any $g\in L_t^{q'}L_x^{p'}$ it holds
\[
\left| \int_{\mathbb{R}} \int_{\mathbb{R}^n} g(x,t)\overline{U(t,r)f(x)}dxdt\right|\leq C\|f\|_{L^2}\|g\|_{L_t^{q'}L_x^{p'}}.
\]
From Fubini's Theorem, Cauchy-Schwarz inequality and Plancherel's identity, we arrived at
\[
\left| \int_{\mathbb{R}} \int_{\mathbb{R}^n}g(x,t)\overline{U(t,r)f(x)}dxdt\right|\leq C\|f\|_{L^2}\left\|\int_{\mathbb{R}}{U}^{-1}(t,r)g(\cdot, t)dt\right\|_{L_x^2(\mathbb{R}^n)},
\]
%where
%\[\widetilde{U}(t,r)g(x,t)=U_{-\alpha,-\beta}(t,r)g(x,t).\]
Similarly to the proof of Proposition \ref{pr1a} we get
\begin{align*}
\left\|\int_{\mathbb{R}}{U}^{-1}(t,r)g(\cdot, t)dt\right\|^2_{L_x^2(\mathbb{R}^n)}\leq \|g\|_{L_t^{q'}L_x^{p'}}\left\|\int_{\mathbb{R}}U(\tau,t)g(\cdot, \tau)d\tau\right\|_{L_t^{q}L_x^{p}}.
\end{align*}
In order to conclude the proof it is enough to show that
\[
\left\|\int_{\mathbb{R}}U(\tau,t)g(\cdot,\tau)d\tau\right\|_{L_t^{q}L_x^{p}}\leq C \|g\|_{L_t^{q'}L_x^{p'}}.
\]
Since $|\beta(t)|\geq \delta_1>0$ for all $t\in \mathbb{R},$ then
\[
|B(t,r)|\geq \delta_1|t-\tau|.
\]
Therefore, using the Fubini's Theorem, following Lemma \ref{VanCorpus1a} and the Hardy-Littlewood-Sobolev inequality (Lemma \ref{HLSineq}), we obtain
\begin{eqnarray*}
\left\|\int_{\mathbb{R}}U(\tau,t)g(\cdot,\tau)d\tau\right\|_{L_t^{q}L_x^{p}}\leq \left\|\int_{\mathbb{R}}\|U(\tau,t)g(\cdot,\tau)\|_{L_x^{p}}d\tau\right\|_{L_t^{q}}\leq \left\|\int_{\mathbb{R}}\frac{\|g(\cdot,\tau)\|_{L_x^{p'}}}{|t-\tau|^{\frac{n\theta}{4}}}d\tau\right\|_{L_t^{q}}
\leq C \|g\|_{L_t^{q'}L_x^{p'}}.
\end{eqnarray*}
%If $L=\Delta^2,$ from Lemma \ref{VanCorpus2} and the Hardy-Littlewood-Sobolev inequality, we also obtain
%\[
%\left\|\int_{\mathbb{R}}\|g(\cdot,\tau)*I_{\tau,t}(\cdot)\|_{L_x^{p}}d\tau\right\|_{L_t^{q}}
%\leq \left\|\int_{\mathbb{R}}\frac{\|g(\cdot,\tau)\|_{L_x^p}}{|t-\tau|^{\frac{\theta(2n-d)}{4}}}d\tau\right\|_{L_t^{q}}
%\leq C \|g\|_{L_t^{q'}L_x^{p'}}.
%\]
This rest of the proof of (\ref{IneLinCont}) is similar to Proposition \ref{pr1a}. On the other hand, a duality argument analogous to the proof of (\ref{StriSti1}) permits to prove (\ref{StriSti2}).
\endproof

\section{Local well-posedness in $H^s(\mathbb{R}^n)$  with $s\geq \frac{\lambda}{2}$}
In this section we prove the local existence in $H^s(\mathbb{R}^n)$  with $s\geq \frac{\lambda}{2}.$ We assume that the variable dispersion $\alpha,\beta$ verifies either 
$\alpha,\beta\in C([-T+t_0,T+t_0])$  with $\beta(t)\neq 0$, for all $t\in[-T+t_0,T+t_0],$ or $\alpha,\beta$ are periodic piecewise constants. Results of local well-posedness in the case of   the Schr\"{o}dinger-Hartree equation with constant dispersion ($\alpha(t)=$constant and $\beta(t)=0$) were obtained in Miao {\it et al} \cite{Miao}.
The proof is obtained through the contraction mapping argument. For that, as usual, we consider the solution of (\ref{FoSch}) via the Duhamel's formula which is given by
\begin{equation}\label{duhamel0}
u(t)=U(t,t_0)u_0+i\theta\int_{t_0}^tU(t,\tau)\{(|x|^{-\lambda}*|u(\tau)|^{2})u(\tau)\}d\tau.
\end{equation}

\subsection{Local well-posedness with continuous dispersion}
\begin{theo}\label{teo1}
Let $n\geq 1,$ $0<\lambda<n,$  $u_0\in H^s(\mathbb{R}^n),$ $s\geq \frac{\lambda}{2},$ and $\alpha,\beta\in C([-T+t_0,T+t_0])$  with $\beta(t)\neq 0$, for all $t\in[-T+t_0,T+t_0].$ Then there exists $T_0=T_0(\Vert u_0 \Vert_{H^s})\leq T$ and a unique solution $u$ of (\ref{duhamel0}) in the class $C([-T_0+t_0,T_0+t_0];H^s(\mathbb{R}^n))$ verifying $\Vert u\Vert_{L_{T_0}^{\infty}H^s}\leq C\Vert u_0\Vert_{H^s}.$
%Let $u_0\in H^s(\mathbb{R}^n),$ $s\geq 0,$ and $\frac{2n}{n+4}<\lambda<2.$ Let $\alpha,\beta\in C([-T+t_0,T+t_0];\mathbb{R}).$Then there exists $T^*=T^*(\Vert u_0 \Vert_{H^s})<T$ and a unique solution $u\in C([-T^*+t_0,T^*+t_0];H^s(\mathbb{R}^n)).$
\end{theo}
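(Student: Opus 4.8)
The plan is to solve the integral equation \eqref{duhamel0} by a contraction mapping argument, closing all estimates directly in the space $C([-T_0+t_0,T_0+t_0];H^s)$. Since $s\ge\lambda/2$ the Hartree nonlinearity maps $H^s$ into itself, so no Strichartz estimate is needed in this regime and the isometry property \eqref{grup2a} of the propagator suffices (this is what distinguishes Section~3 from the low-regularity Section~4). Concretely, I would set
\[
\Phi(u)(t)=U(t,t_0)u_0+i\theta\int_{t_0}^t U(t,\tau)N(u(\tau))\,d\tau,\qquad N(u)=\bigl(|x|^{-\lambda}*|u|^2\bigr)u,
\]
and work in $X_{M,T_0}=\{u\in C([-T_0+t_0,T_0+t_0];H^s):\|u\|_{L^\infty_{T_0}H^s}\le M\}$ with $M=2C\|u_0\|_{H^s}$. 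Applying \eqref{grup2a} inside the Duhamel integral gives
\[
\|\Phi(u)(t)\|_{H^s}\le\|u_0\|_{H^s}+|\theta|\int_{t_0}^{t}\|N(u(\tau))\|_{H^s}\,d\tau\le\|u_0\|_{H^s}+|\theta|\,T_0\sup_{\tau}\|N(u(\tau))\|_{H^s},
\]
so the whole problem reduces to a pointwise-in-time nonlinear estimate in $H^s$, the factor $T_0$ being what makes $\Phi$ contractive.

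The heart of the argument is the claim that $\|N(u)\|_{H^s}\le C\|u\|_{H^s}^3$. To prove it I would write $N(u)=Vu$ with $V=|x|^{-\lambda}*|u|^2$ and split $\|N(u)\|_{H^s}$ into an $L^2$ part and a $\|D^sN(u)\|_{L^2}$ part. For the $L^2$ part, Hardy's inequality (Lemma~\ref{MaxSti}) yields $\|V\|_{L^\infty}\le C\|u\|_{\dot H^{\lambda/2}}^2$, and because $\lambda/2\le s$ one has $\|u\|_{\dot H^{\lambda/2}}\le\|u\|_{H^s}$, so $\|N(u)\|_{L^2}\le\|V\|_{L^\infty}\|u\|_{L^2}\le C\|u\|_{H^s}^3$. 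For $\|D^sN(u)\|_{L^2}$ I apply the fractional Leibniz rule (Lemma~\ref{LeibRule}): the term carrying the derivatives on $u$ is again bounded by $\|V\|_{L^\infty}\|D^su\|_{L^2}\le C\|u\|_{H^s}^3$; for the term carrying the derivatives on $V$ I use that $D^s$ commutes with the convolution, so $D^sV=|x|^{-\lambda}*D^s(|u|^2)$, estimate the convolution by the Hardy–Littlewood–Sobolev inequality (Lemma~\ref{HLSineq}), and then apply Lemma~\ref{LeibRule} once more together with Sobolev embeddings, choosing the Hölder/HLS exponents so that every factor collapses to $\|u\|_{H^s}$.

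For the contraction I would estimate the difference in the weaker $L^2$ norm, using the identity
\[
N(u)-N(v)=\bigl(|x|^{-\lambda}*|u|^2\bigr)(u-v)+\bigl(|x|^{-\lambda}*(|u|^2-|v|^2)\bigr)v
\]
together with Hardy's inequality and Cauchy--Schwarz, to obtain $\|N(u)-N(v)\|_{L^2}\le C(\|u\|_{H^s}^2+\|v\|_{H^s}^2)\|u-v\|_{L^2}$. The closed ball $X_{M,T_0}$ is complete for the metric $d(u,v)=\|u-v\|_{L^\infty_{T_0}L^2}$, so taking $T_0=T_0(\|u_0\|_{H^s})$ small enough (and shrinking against $M$) makes $\Phi$ both a self-map of $X_{M,T_0}$ and a contraction in $d$.

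Banach's fixed point theorem then produces the solution; the bound $\|u\|_{L^\infty_{T_0}H^s}\le C\|u_0\|_{H^s}$ is read off directly from the self-mapping estimate, uniqueness in the full class follows from the $L^2$ difference estimate and a Gronwall argument, and time continuity $u\in C([-T_0+t_0,T_0+t_0];H^s)$ comes from the strong continuity of $U(t,t_0)$ and dominated convergence applied to the Duhamel term. I expect the main obstacle to be the cubic nonlinear estimate, and within it the bookkeeping of the Hölder, HLS and Sobolev exponents in the term $\|D^sV\|_{L^{p_1}}\|u\|_{L^{q_2}}$, namely verifying that an admissible choice of exponents exists throughout the entire parameter range $n\ge1$, $0<\lambda<n$, $s\ge\lambda/2$.
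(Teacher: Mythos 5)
Your proposal is correct and follows essentially the same route as the paper's proof: a contraction argument for $\Phi_1$ in a ball of $L^\infty_T H^s$ equipped with the weaker metric $d(u,v)=\|u-v\|_{L^\infty_T L^2}$, where the self-mapping bound $\|N(u)\|_{H^s}\lesssim\|u\|_{H^s}^3$ is obtained exactly as you describe (isometry of $U(t,t_0)$, Hardy's inequality for $\||x|^{-\lambda}*|u|^2\|_{L^\infty}$, the fractional Leibniz rule, HLS applied to $D^s$ of the potential, and Sobolev embedding with the exponents $\frac{2n}{\lambda}$, $\frac{2n}{2n-\lambda}$, $\frac{2n}{n-\lambda}$, all collapsing to $\|u\|_{H^s}$ since $s\geq\lambda/2$), the contraction uses your same two-term decomposition of $N(u)-N(v)$, and time continuity is proved afterwards via dominated convergence on the Duhamel formula. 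The only slight difference is that the paper defines the ball inside $L^\infty_T(H^s)$ rather than $C([-T_0+t_0,T_0+t_0];H^s)$, which makes completeness under the $L^\infty_T L^2$ metric immediate (a ball of continuous-in-$H^s$ functions need not be closed under that weaker metric), and then recovers the $H^s$-continuity of the fixed point as a final separate step, exactly as in your last paragraph.
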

\proof Consider the mapping
\begin{equation}\label{fixedpoin}
\Phi_1(u)(t)=U(t,t_0)u_0+i\theta\int_{t_0}^tU(t,\tau)\{(|x|^{-\lambda}*|u(\tau)|^{2})u(\tau)\}d\tau.
\end{equation}
Let $R>0$ and $(X_{T,R}^s,d)$ be the complete metric space 
\[ X_{T,R}^s=\left\{ u\in L_T^{\infty}(H^s(\mathbb{R}^n)): \Vert u\Vert_{L_T^{\infty}H^s}\leq R \right\},
\]
with metric $d(u,v)=\Vert u-v\Vert_{L_T^{\infty}L^2}.$\\ 

From (\ref{grup2a}), Lemmas \ref{MaxSti}, \ref{HLSineq} and \ref{LeibRule}, and the Sobolev embedding, we obtain
\begin{align}
\|\Phi_1(u)\|_{H^s}&\leq  \|U(t,t_0)u_0\|_{H^s}+\left\|\theta\int_{t_0}^tU(t,\tau)\{(|x|^{-\lambda}*|u(\tau)|^{2})u(\tau)\}d\tau\right\|_{H^s}\nonumber\\
&\apprle \|u_0\|_{H^s}+\int_{t_0-T}^{t_0+T}\left\|(|x|^{-\lambda}*|u(\tau)|^{2})u(\tau)\right\|_{H^s}d\tau\nonumber\\
&\apprle  \|u_0\|_{H^s}+T\left\|(|x|^{-\lambda}*|u|^{2})u\right\|_{L^{\infty}_TH^s}\nonumber\\
&\apprle  \|u_0\|_{H^s}+T\left\||x|^{-\lambda}*|u|^{2}\|_{L^{\infty}_TL^{\infty}_x} \|u\right\|_{L^{\infty}_TH^s}+T\||x|^{-\lambda}*|u|^{2}\|_{L^{\infty}_TH^s_{\frac{2n}{\lambda}}}
\|u\|_{L^{\infty}_TL^{\frac{2n}{n-\lambda}}_x}\nonumber\\
&\apprle  \|u_0\|_{H^s}+T\|u\|^2_{L^{\infty}_T\dot{H}^{\lambda/2}} \|u\|_{L^{\infty}_TH^s}+T\||u|^{2}\|_{L^{\infty}_TH^s_{\frac{2n}{2n-\lambda}}}
\|u\|_{L^{\infty}_TL^{\frac{2n}{n-\lambda}}_x}\nonumber\\
&\apprle  \|u_0\|_{H^s}+T\|u\|^2_{L^{\infty}_T\dot{H}^{\lambda/2}} \|u\|_{L^{\infty}_TH^s}+T\|u\|_{L^{\infty}_TH^s}
\|u\|^2_{L^{\infty}_TL^{\frac{2n}{n-\lambda}}_x}\nonumber\\
&\apprle  \|u_0\|_{H^s}+T\|u\|^2_{L^{\infty}_T\dot{H}^{\lambda/2}} \|u\|_{L^{\infty}_TH^s}\nonumber\\
&\apprle  \|u_0\|_{H^s}+T \|u\|^3_{L^{\infty}_TH^s}.\label{es1}
\end{align}
Therefore, $\|\Phi_1(u)\|_{L^{\infty}_TH^s}\apprle \|u_0\|_{H^s}+T \|u\|^3_{L^{\infty}_TH^s}.$ If we choose $R$ and $T_0\leq T$ such that $C\Vert u_0\Vert_{H^s}\leq \frac R2$ and $CT_0R^2\leq \frac 12,$ we have that $\Phi_1$ maps $X_{T_0,R}^s$ to itself. Now, from the H\"{o}lder inequality, Lemma \ref{MaxSti} and the Sobolev embedding, we get
\begin{align*}
\|\Phi_1(u)-\Phi_1(v)\|_{L^{\infty}_TL^2_x}&\apprle T\left\|(|x|^{-\lambda}*|u|^{2})u-(|x|^{-\lambda}*|v|^{2})v\right\|_{L^{\infty}_TL^2}\\
&\apprle T\left\|(|x|^{-\lambda}*|u|^{2})(u-v)\right\|_{L^{\infty}_TL^2}+T\left\|(|x|^{-\lambda}*(|u|^{2}-|v|^{2}))v\right\|_{L^{\infty}_TL^2}\\
&\apprle T\left\|(|x|^{-\lambda}*|u|^{2})\right\|_{L^{\infty}_TL^{\infty}}  \|u-v\|_{L^{\infty}_TL^2}\\
&+T\left\||x|^{-\lambda}*(|u|^{2}-|v|^{2})\right\| _{L^{\infty}_TL^{\frac{2n}{\lambda}}}\|v\|_{L^{\infty}_TL^{\frac{2n}{n-\lambda}}}\\
&\apprle T\|u\|^2_{L^{\infty}_T\dot{H}^{\lambda/2}} \|u-v\|_{L^{\infty}_TL^2}+T\||u|^{2}-|v|^{2}\| _{L^{\infty}_TL^{\frac{2n}{2n-\lambda}}}\|v\|_{L^{\infty}_T\dot{H}^{\lambda/2}}\\
&\apprle T\|u\|^2_{L^{\infty}_T\dot{H}^{\lambda/2}} \|u-v\|_{L^{\infty}_TL^2}+T\|u-v\| _{L^{\infty}_TL^2}\|u+v\| _{L^{\infty}_TL^{\frac{2n}{n-\lambda}}}\|v\|_{L^{\infty}_T\dot{H}^{\lambda/2}}\\
&\apprle T\|u-v\|_{L^{\infty}_TL^2}\left( \|u\|^2_{L^{\infty}_T\dot{H}^{\lambda/2}}  + \|u+v\| _{L^{\infty}_TL^{\frac{2n}{2n-\lambda}}}\|v\|_{L^{\infty}_T\dot{H}^{\lambda/2}}\right)\\
&\apprle TR^2\|u-v\|_{L^{\infty}_TL^2}.
\end{align*}
Thus, if we take $T_0\leq T$ small enough, $\Phi_1$ is a contraction. Consequently, $\Phi_1$ has a unique fixed point at $X_{T_0,R}^s$ which is solution of (\ref{duhamel0}).
Finally, we will prove the time-continuity of the solution. For that, let $t_1\in[-T_0+t_0,t_0+T_0].$ We will show that
\begin{equation}\label{lim2}
\lim_{t\rightarrow t_1}\Vert u(t)-u(t_1)\Vert_{H^s}=0.
\end{equation}
From integral equation (\ref{duhamel0}) we have
\begin{equation}\label{duhamel2}
u(t_1)=U(t_1,t_0)u_0+i\theta\int_{t_0}^{t_1}U(t_1,\tau)\{(|x|^{-\lambda}*|u(\tau)|^2)u(\tau)\}d\tau.
\end{equation}
Then, taking the $H^s$-norm of the difference between (\ref{duhamel0}) and (\ref{duhamel2}) we get
\begin{align}
&\Vert u(t)-u(t_1)\Vert_{H^s}\leq \Vert U(t,t_0)u_0-U(t_1,t_0)u_0\Vert_{H^s}\nonumber\\
&\hspace{0.5cm} +\left\Vert \theta\int_{t_0}^{t}U(t,\tau)\{(|x|^{-\lambda}*|u(\tau)|^2)u(\tau)\}d\tau-\theta\int_{t_0}^{t_1}U(t,\tau)\{(|x|^{-\lambda}*|u(\tau)|^2)u(\tau)\}d\tau\right\Vert_{H^s}\nonumber\\
&\hspace{0.5cm}:=J_1+J_2.\nonumber
\end{align}
Notice that
\[
J_1=\left\Vert\langle\xi\rangle^s(e^{-i\xi^2A(t,t_0)+i\xi^4B(t,t_0)}-e^{-i\xi^2A(t_1,t_0)+i\xi^4B(t_1,t_0)})\hat{u}_0(\xi)\right\Vert_{L^2(\mathbb{R}^n)}.
\]
Since $A(t,t_0)$ and $B(t,t_0)$ are continuous in the variable $t,$ and $u_0\in H^s,$ then the Lebesgue Dominated Convergence Theorem implies that $\lim\limits_{t\rightarrow t_1}J_1=0.$ On the other hand we have
\begin{align*}
J_2 &\leq  \left\Vert (U(t,t_0)-U(t_1,t_0))\theta\int_{t_0}^{t_1}U(t_0,\tau)\{(|x|^{-\lambda}*|u(\tau)|^2)u(\tau)\}d\tau\right\Vert_{H^s}\\
&+ \left\Vert \theta\int_{t_1}^{t}U(t,\tau)\{(|x|^{-\lambda}*|u(\tau)|^2)u(\tau)\}d\tau\right\Vert_{H^s}:=J_3+J_4.
\end{align*}
From (\ref{grup1a}), (\ref{grup2a}) and taking into account that $u(t_1)\in H^s$ we get
\begin{eqnarray}
\left\Vert \int_{t_0}^{t_1}U(t_0,\tau)\{(|x|^{-\lambda}*|u(\tau)|^2)u(\tau)\}d\tau\right\Vert_{H^s}&=&\left\Vert U(t_0,t_1)\int_{t_0}^{t_1}U(t_1,\tau)\{(|x|^{-\lambda}*|u(\tau)|^2)u(\tau)\}d\tau\right\Vert_{H^s}\nonumber\\
&=& \left\Vert \int_{t_0}^{t_1}U(t_1,\tau)\{(|x|^{-\lambda}*|u(\tau)|^2)u(\tau)\}d\tau\right\Vert_{H^s}<\infty.\nonumber
\end{eqnarray}
Therefore, analogously to the $\lim\limits_{t\rightarrow t_1}J_1=0,$ we obtain that $\lim\limits_{t\rightarrow t_1}J_3=0.$ Finally, in order to conclude (\ref{lim2}) we need to prove that $\lim\limits_{t\rightarrow t_1}J_4=0.$ For that, following the calculus in estimate (\ref{es1}) we obtain
\begin{eqnarray*}
J_4 &\leq &\left\vert \theta\int_{t_1}^{t}\Vert U(t_1,\tau)\{(|\cdot|^{-\lambda}*|u(\tau)|^2)u(\tau)\}\Vert_{H^s}d\tau\right\vert\\
&\leq & C\vert t-t_1\vert R^3\rightarrow 0,\ \mbox{as}\ t\rightarrow t_1.
\end{eqnarray*}
Thus we conclude the proof of Theorem \ref{teo1}.
\endproof
\begin{remark}\label{glo1}
As consequence of Theorem \ref{teo1} we have the local existence in $H^s(\mathbb{R}^n)$ with $s\geq \frac{\lambda}{2},$ where $\alpha,\beta$ are constants, $\beta\neq 0.$ In this case, if $s=2,$ $0<\lambda<n,$ $\theta\leq 0,$ $\alpha\geq 0,$ $\beta>0$ and $0<\lambda\leq 4,$ the existence time of solution $u,$ provided by Theorem \ref{teo1}, is $T_0=\infty.$ Indeed, in this case, the solution $u$ of (\ref{FoSch}) satisfies the following energy conservation law
\begin{equation}\label{lcx}
E(u(t))=-\beta\Vert \Delta u(t)\Vert^2_{L^2}+\alpha\Vert \nabla u(t)\Vert^2_{L^2}-\frac{\theta}{4}\int\int\frac{1}{\vert x-y\vert^{\lambda}}\vert u(t,x)\vert^2\vert u(t,y)\vert^2dxdy.
\end{equation}
Then, if $\alpha\geq 0$, $\beta>0,$ and $\theta\leq 0,$ it holds
\begin{eqnarray*}
\beta\Vert \Delta u\Vert^2_{L^2}\leq-E(u_0)+\alpha\Vert \nabla u\Vert^2_{L^2}\leq -E(u_0)+\alpha \Vert \Delta u\Vert_{L^2}\Vert \Vert u_0\Vert_{L^2},
\end{eqnarray*}
which shows that $\Vert \Delta u\Vert_{L^2}$ is globally bounded. 
\end{remark}
\subsection{Local well-posedness with piecewise constant dispersion}
Consider the integral formulation (\ref{duhamel0}). Using the decomposition (\ref{descomp}), for each $t_0$ there exists an integer $m$ such that $t_0\in I_m^1$ or $t_0\in I_m^2,$
with $I_m^1=(m,m+\tau_+]$ and $I_m^2=(m+\tau_+,m+1].$ Without loss of generality we assume that $t_0\in(0,1].$  In this case,
\[t_0\in(0,\tau_+)\cup \{\tau_+\}\cup(\tau_+,1)\cup\{1\}.\]
If $t_0\in(0,\tau_+),$ considering the function $\beta(t)=\beta^+$, following the proof of Theorem \ref{teo1}, there exists $T_0>0$ such that $[-T_0+t_0,T_0+t_0]\subset (0,\tau_+)$  and a unique solution $u\in C([-T_0+t_0,T_0+t_0];H^s(\mathbb{R}^n)),$ $s\geq \lambda/2.$  Analogously, if $t_0\in(\tau_+,1),$ considering the function $\beta(t)=-\beta^-$,  there exists $T_0>0$ such that  $[-T_0+t_0,T_0+t_0]\subset (\tau_+,1),$ and a unique solution $u\in C([-T_0+t_0,T_0+t_0];H^s(\mathbb{R}^n)),$ $s\geq \lambda/2.$ If $t_0=\tau_+,$ considering the function $\beta_1(t)=\beta^+,$  there exists $T^*_1>0$ such that $[-T^*_1+t_0,t_0]\subset (0, \tau_+]$ and a unique solution $u_1\in C([-T^*_1+t_0,t_0];H^s(\mathbb{R}^n)),$ $s\geq \lambda/2.$ On the other hand, considering the function $\beta_2(t)=-\beta^-,$ there exists $T^*_2>0$ such that $[t_0,t_0+T^*_2]\subset (\tau_+,1]$ and a unique solution $u_2\in C([t_0,t_0+T^*_2];H^s(\mathbb{R}^n)),$ $s\geq \lambda/2.$ Thus, defining $T_0=\min\{T^*_1,T^*_2\}$ and
\[
u(t)=\left\{\begin{array}{lr}
        u_1(t)\ \ \ \text{if}\  -T_0+t_0\leq t\leq t_0,\\
        u_2(t)\ \ \ \text{if}\  t_0< t\leq t_0+T_0,
        \end{array}\right.
\]
we have that $u$ solves (\ref{FoSch}) on $[-T_0+t_0,t_0+T_0]$ with $u(t_0) = u_0.$ The continuity in time of $u(t)$ follows from $\lim_{t\rightarrow 0}\Vert u(t)-u_0\Vert_{H^s} = 0$ (see the proof of (\ref{lim2})). If $t_0=1,$ the proof follows in a similar way. The previous argument works for $\alpha$ piecewise constant, and independent of the discontinuity point of $t_+$ in (\ref{alpha_beta}).
Thus, we have proved the following result. 
\begin{theo}\label{teo1c}
Let $u_0\in H^s(\mathbb{R}^n),$ $s\geq \lambda/2,$ and $0<\lambda<n,$ $n\geq 1.$ Consider $\alpha,\beta$ periodic and piecewise constant as in (\ref{alpha_beta}).  Then there exists $T_0=T_0(\Vert u_0 \Vert_{H^s})\leq T$ and a unique solution $u$ of the Cauchy problem  (\ref{FoSch}) in the class $C([-T_0+t_0,T_0+t_0];H^s(\mathbb{R}^n))$ verifying $\Vert u\Vert_{L_{T_0}^{\infty}H^s}\leq C\Vert u_0\Vert_{H^s}.$

%Let $u_0\in H^s(\mathbb{R}^n),$ $s\geq 0,$ and $\frac{2n}{n+4}<\lambda<2.$ Let $\alpha,\beta\in C([-T+t_0,T+t_0];\mathbb{R}).$Then there exists $T^*=T^*(\Vert u_0 \Vert_{H^s})<T$ and a unique solution $u\in C([-T^*+t_0,T^*+t_0];H^s(\mathbb{R}^n)).$
\end{theo}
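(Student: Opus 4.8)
The plan is to reduce the piecewise-constant problem to repeated applications of Theorem \ref{teo1}, exploiting the fact that on each subinterval of the partition (\ref{descomp}) the coefficients $\alpha,\beta$ are genuinely constant, hence in particular continuous with $\beta\neq 0$. By the periodicity $\alpha(t+T_1)=\alpha(t)$ and $\beta(t+1)=\beta(t)$, one may assume $t_0\in(0,1]$, and I would split into the four cases $t_0\in(0,\tau_+)$, $t_0=\tau_+$, $t_0\in(\tau_+,1)$, and $t_0=1$, according to the position of $t_0$ relative to the single discontinuity of $\beta$ in $(0,1]$.

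In the interior cases $t_0\in(0,\tau_+)$ and $t_0\in(\tau_+,1)$ the argument is immediate: one freezes $\beta$ to the constant value ($\beta^+$ or $-\beta^-$) it takes on that open piece and chooses $T_0>0$ so small that the whole interval $[-T_0+t_0,T_0+t_0]$ remains inside the piece. On such an interval the Cauchy problem (\ref{FoSch}) coincides with a constant-coefficient problem to which Theorem \ref{teo1} applies verbatim, producing a unique $u\in C([-T_0+t_0,T_0+t_0];H^s(\mathbb{R}^n))$ with $\|u\|_{L_{T_0}^{\infty}H^s}\leq C\|u_0\|_{H^s}$.

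The only genuinely new point is the endpoint cases $t_0=\tau_+$ (and, symmetrically, $t_0=1$), where $t_0$ sits on the jump of $\beta$. Here I would solve the equation separately on each side. Using the constant value $\beta^+$ on $(0,\tau_+]$, a one-sided version of the fixed-point argument of Theorem \ref{teo1}, run backward from $t_0$, yields $T_1^{*}>0$ and a unique $u_1\in C([-T_1^{*}+t_0,t_0];H^s)$; using $-\beta^-$ on $(\tau_+,1]$ and running forward gives $T_2^{*}>0$ and $u_2\in C([t_0,t_0+T_2^{*}];H^s)$. Setting $T_0=\min\{T_1^{*},T_2^{*}\}$ and gluing $u_1$ and $u_2$ at $t_0$, where both equal $u_0$, produces a candidate solution on $[-T_0+t_0,t_0+T_0]$.

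The main obstacle is to check that this glued function really solves the Duhamel equation (\ref{duhamel0}) across the discontinuity and is continuous in $H^s$ at $t_0$. The observation that makes this clean is that the cumulative dispersions $A(t,r)=\int_r^t\alpha(\tau)\,d\tau$ and $B(t,r)=\int_r^t\beta(\tau)\,d\tau$ are continuous in $t$ even though $\alpha,\beta$ jump, being integrals of bounded functions, so the propagator $U(t,r)$ depends continuously on its time arguments and the composition law (\ref{grup1a}) holds without any restriction at the jump point. Continuity at $t_0$ then follows exactly as in the derivation of (\ref{lim2}): the $H^s$-continuity of $t\mapsto U(t,t_0)u_0$ comes from dominated convergence applied to the phase $e^{-i|\xi|^2A(t,t_0)+i|\xi|^4B(t,t_0)}$, while the Duhamel integral term is controlled by the cubic estimate (\ref{es1}), giving a bound of order $|t-t_0|\,R^3$ that vanishes as $t\to t_0$. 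Finally, the same construction applies with $\alpha$ piecewise constant and with the discontinuity point $t_+$ of $\alpha$ independent of $\tau_+$, since only the local constancy and integrability of the coefficients, not the precise location of the jumps, are used.
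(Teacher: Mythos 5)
Your proposal is correct and follows essentially the same route as the paper: reduce to the partition $(m,m+\tau_+]\cup(m+\tau_+,m+1]$, apply the continuous-dispersion result (Theorem \ref{teo1}) on each piece where $\beta$ is constant, handle the jump points $t_0=\tau_+$ and $t_0=1$ by solving one-sided problems and gluing at $t_0$, with $H^s$-continuity at the junction obtained exactly as in the proof of (\ref{lim2}). Your explicit verification that the glued function satisfies the Duhamel equation across the discontinuity, via the continuity of the cumulative dispersions $A(t,r)$, $B(t,r)$ and the composition law (\ref{grup1a}), is a point the paper leaves implicit, but it is the same underlying argument.
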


\section{Local existence in $H^s(\mathbb{R}^n)$ with $\max\{0, \frac{\lambda}{2}-2\}\leq s<\frac{\lambda}{2}$}
In this section we prove the local existence in $H^s(\mathbb{R}^n)$  with $\max\{0, \frac{\lambda}{2}-2\}\leq s<\frac{\lambda}{2}.$ As before, we assume that the variable dispersion $\alpha,\beta$ satisfies either 
$\alpha,\beta\in C([-T+t_0,T+t_0])$  with $\beta(t)\neq 0$, for all $t\in[-T+t_0,T+t_0]$ or $\alpha,\beta$ are periodic piecewise constants.
The proof is obtained through the contraction mapping argument. However, it is not easy to obtain the solution by using the contraction mapping approach only in $C([0,T];H^s(\mathbb{R}^n)).$ As usual, we use the Strichartz estimates, obtained in Section 3, and the Hardy and Hardy-Littlewood-Sobolev inequalities in order to obtain the existence of local solutions in $C([0,T];H^s(\mathbb{R}^n))\cap L_T^q(H^s_p(\mathbb{R}^n)),$ for some admissible pair $(q,p).$ Consider the mapping $\Phi_1$ defined in (\ref{fixedpoin}), and let   
\[ Y_{T,R}^s=\left\{ u\in L_T^{\infty}(H^s(\mathbb{R}^n))\cap L_T^q(H^s_p(\mathbb{R}^n)): \Vert u\Vert_{L_T^{\infty}H^s}+ \Vert u\Vert_{L_T^{q}H_p^s}\leq R \right\},\]
with metric $d(u,v)=\Vert u-v\Vert_{L_T^{\infty}H^s}+ \Vert u-v\Vert_{L_T^{q}H_p^s}$ and the admissible pair $(q,p)=\left(\frac{12}{\lambda-2s}, \frac{6n}{3n+4s-2\lambda}\right).$ 

\subsection{Local well-posedness with continuous dispersion}

\begin{theo}\label{teo2}
Let $n\geq 1,$ $0<\lambda<n,$ $u_0\in H^s(\mathbb{R}^n),$ with $\max\{0, \frac{\lambda}{2}-2\}\leq s<\frac{\lambda}{2}$ and $(q,p)$ the admissible pair $(q,p)=\left(\frac{12}{\lambda-2s}, \frac{6n}{3n+4s-2\lambda}\right).$ Consider $\alpha,\beta\in C([-T+t_0,T+t_0])$  with $\beta(t)\neq 0$, for all $t\in[-T+t_0,T+t_0].$ Then there exists $T_0=T_0(\Vert u_0 \Vert_{H^s})\leq T$ and a unique solution $u$ of (\ref{duhamel0}) in the class $C([-T_0+t_0,T_0+t_0];H^s(\mathbb{R}^n))\cap L_{T_0}^q(H^s_p(\mathbb{R}^n)).$  
%Let $u_0\in H^s(\mathbb{R}^n),$ $s\geq 0,$ and $\frac{2n}{n+4}<\lambda<2.$ Let $\alpha,\beta\in C([-T+t_0,T+t_0];\mathbb{R}).$Then there exists $T^*=T^*(\Vert u_0 \Vert_{H^s})<T$ and a unique solution $u\in C([-T^*+t_0,T^*+t_0];H^s(\mathbb{R}^n)).$
\end{theo}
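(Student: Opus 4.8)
The plan is to run a Banach fixed point argument for the Duhamel map $\Phi_1$ of (\ref{fixedpoin}) on the ball $(Y_{T,R}^s,d)$, closing it this time entirely through the Strichartz estimates of Proposition \ref{pr1} rather than through the pointwise Hardy bound used in Theorem \ref{teo1}. The linear term is controlled by the isometry property (\ref{grup2a}) for the $L_T^\infty H^s$ component and by (\ref{IneLinCont}) for the $L_T^q H_p^s$ component, commuting the multiplier $D^s$ past $U(t,t_0)$, so that $\|U(t,t_0)u_0\|_{L_T^\infty H^s}+\|U(t,t_0)u_0\|_{L_T^q H_p^s}\lesssim \|u_0\|_{H^s}$. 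For the inhomogeneous term I would apply the retarded estimate (\ref{StriSti2}) twice to $D^s$ of the nonlinearity: once with the admissible pair $(q,p)$ on the left, and once with the pair $(\infty,2)$ on the left (which is admissible and recovers the $L_T^\infty H^s$ norm), in both cases measuring the source in the dual norm $L_T^{q'}H^s_{p'}$. This reduces the whole construction to a single trilinear estimate for the Hartree nonlinearity $N(u)=(|x|^{-\lambda}*|u|^2)u$.

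That estimate is the heart of the matter, and the obstacle is precisely that $s<\frac{\lambda}{2}$ makes Lemma \ref{MaxSti} unavailable: the potential $|x|^{-\lambda}*|u|^2$ can no longer be placed in $L^\infty$, so its regularity must be borrowed from the Strichartz space. The scheme I would follow is to distribute $D^s$ with the fractional Leibniz rule (Lemma \ref{LeibRule}), both on the outer factor $u$ and, after commuting $D^s$ through the convolution, on the two factors inside $|u|^2$; to resolve the convolution with the Hardy--Littlewood--Sobolev inequality (Lemma \ref{HLSineq}); and then to balance the spatial exponents by Hölder. The key bookkeeping is that the two factors inside $|u|^2$ cannot both be taken in a Sobolev space embedded from $H_p^s$ (that balance forces $s=\frac{\lambda}{2}$); instead one inner factor is placed in a Sobolev-embedded space coming from $L_T^q H_p^s$ and the other in one coming from $L_T^\infty H^s$, which is exactly what makes the intermediate HLS exponent match the required $\tfrac1A=1-\tfrac2p$. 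Performing then the Hölder inequality in time — two factors in $L_t^q$, one in $L_t^\infty$, over an interval of length $2T$ — yields the gain
\[
\|(|x|^{-\lambda}*|u|^2)u\|_{L_T^{q'}H^s_{p'}}\lesssim T^{\frac{s-s_c}{2}}\,\|u\|_{L_T^\infty H^s}\,\|u\|^2_{L_T^q H_p^s},\qquad s_c=\tfrac{\lambda}{2}-2,
\]
the exponent $\tfrac{s-s_c}{2}$ being nonnegative precisely on the admissible range $\max\{0,\frac{\lambda}{2}-2\}\le s<\frac{\lambda}{2}$, and strictly positive above the scaling-critical value $s_c$.

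With this estimate the remaining steps are routine. Choosing $R\simeq\|u_0\|_{H^s}$ and then $T_0\le T$ so small that $C T_0^{(s-s_c)/2}R^2\le\frac12$ makes $\Phi_1$ map $Y_{T_0,R}^s$ into itself; the contraction follows by applying the same HLS--Hölder--Sobolev scheme to the telescoped difference $N(u)-N(v)=(|x|^{-\lambda}*|u|^2)(u-v)+(|x|^{-\lambda}*(|u|^2-|v|^2))v$, producing $d(\Phi_1 u,\Phi_1 v)\lesssim T_0^{(s-s_c)/2}R^2\,d(u,v)$. The unique fixed point solves (\ref{duhamel0}), and its continuity in time into $H^s$ is obtained exactly as for $J_1$--$J_4$ in the proof of Theorem \ref{teo1}, using the continuity of the cumulative dispersions and dominated convergence for the free part together with the absolute continuity of the Duhamel integral for the nonlinear part; the piecewise-constant case is then glued across the discontinuities as in Theorem \ref{teo1c}.

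The step I expect to be genuinely delicate is the exponent balancing in the trilinear estimate: one must verify simultaneously that $(q,p)$ is admissible (it is, since $\tfrac4q=n(\tfrac12-\tfrac1p)$ holds for $(q,p)=(\tfrac{12}{\lambda-2s},\tfrac{6n}{3n+4s-2\lambda})$), that every HLS exponent lies strictly between $1$ and $\infty$, and that the Sobolev embeddings $H_p^s\hookrightarrow L^{2B_1}$ and $H^s\hookrightarrow L^{2B_2}$ are legitimate (in particular $s<n/p$) throughout the stated range of $(s,\lambda,n)$. A further point needing care is the endpoint $s=s_c=\frac{\lambda}{2}-2$ (relevant only when $\lambda\ge 4$, hence $n>4$): there the power $T^{(s-s_c)/2}$ degenerates and the smallness must instead be extracted from $\|U(t,t_0)u_0\|_{L_{T_0}^q H_p^s}\to0$ as $T_0\to0^+$, so that local existence still holds although the existence time then depends on the profile of $u_0$ and not merely on $\|u_0\|_{H^s}$.
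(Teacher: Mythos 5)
Your proposal follows essentially the same route as the paper's proof: a contraction on $Y^s_{T,R}$ built from Proposition \ref{pr1}, with the trilinear Hartree estimate closed by the fractional Leibniz rule (Lemma \ref{LeibRule}), the Hardy--Littlewood--Sobolev inequality, the embedding $H^s_p\hookrightarrow L^b_x$ and H\"older in time, yielding exactly the paper's gain $T^{\rho}$ with $\rho=1+\frac{s}{2}-\frac{\lambda}{4}=\frac{s-s_c}{2}$, followed by the same telescoped difference for the contraction and the same continuity argument as in Theorem \ref{teo1}. One remark: your closing observation about the endpoint $s=\max\{0,\frac{\lambda}{2}-2\}$ when $\lambda\geq 4$ (where $\rho=0$) flags a case the paper's own proof silently overlooks --- there the requirement $CT_0^{\rho}R^2\leq\frac{1}{2}$ cannot be met by shrinking $T_0$ --- and your fix of extracting smallness from $\|U(t,t_0)u_0\|_{L^q_{T_0}H^s_p}\rightarrow 0$ is the standard correct repair, at the price (which you rightly note) that $T_0$ then depends on the profile of $u_0$ and not merely on $\|u_0\|_{H^s}$ as the statement of the theorem asserts.
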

\proof 
Since $U(t,t_0)$ is unitary in $H^s,$ using Proposition \ref{pr1}, Lemma \ref{LeibRule}, Hardy and H\"{o}lder inequalities and Sobolev embeddings, we obtain
\begin{align*}
\|\Phi_1(u)\|_{L^{\infty}_TH^s}&\leq  \|U(t,t_0)u_0\|_{L^{\infty}_TH^s}+\left\|\theta\int_{t_0}^tU(t,\tau)\{(|x|^{-\lambda}*|u(\tau)|^{2})u(\tau)\}d\tau\right\|_{L^{\infty}_TH^s}\\
&\apprle \|u_0\|_{H^s}+\left\|(|x|^{-\lambda}*|u|^{2})u\right\|_{L^{q'}_TH_{p'}^s}\\
&\apprle \|u_0\|_{H^s}+\left\||x|^{-\lambda}*|u|^{2}\right\|_{L^{q'}_TL_x^{\frac{3n}{\lambda-2s}}}\|u\|_{L_T^{\infty}H^s} +
 \left\||x|^{-\lambda}*|u|^{2}\right\|_{L^d_TH_{\frac{3n}{2\lambda-s}}^s}\|u\|_{L_T^qL_x^b}\\
&\apprle \|u_0\|_{H^s}+\left\|u\right\|^2_{L^{2q'}_TL_x^b}\|u\|_{L_T^{\infty}H^s} +
 \left\||u|^{2}\right\|_{L^d_TH_{\frac{3n}{3n-s-\lambda}}^s}\|u\|_{L_T^qL_x^b}\\
 &\apprle \|u_0\|_{H^s}+\left\|u\right\|^2_{L^{2q'}_TL_x^b}\|u\|_{L_T^{\infty}H^s} +
 \|u\|_{L^{\infty}_TH^s}\|u\|_{L^d_TL_x^b}\|u\|_{L_T^qL_x^b}\\
&\apprle \|u_0\|_{H^s}+\left\|u\right\|^2_{L^{2q'}_TH_p^s}\|u\|_{L_T^{\infty}H^s} +
 \|u\|_{L^{\infty}_TH^s}\|u\|_{L^d_TH_p^s}\|u\|_{L_T^qH_p^s}\\
 &\apprle \|u_0\|_{H^s}+T^{\rho}\left\|u\right\|^2_{L^q_TH_p^s}\|u\|_{L_T^{\infty}H^s},
 \end{align*}
with $\rho=1+\frac{s}{2}-\frac{\lambda}{4},$ $d=\frac{6}{6-\lambda+2s}$  and  $b=\frac{6n}{3n-2s-2\lambda}.$ In the same way, we also have
\begin{align*}
\|\Phi_1(u)\|_{L_T^qH_p^s}&\leq  \|u_0\|_{H^s}+\left\|(|x|^{-\lambda}*|u|^{2})u\right\|_{L^{q'}_TH_{p'}^s}\\
&\apprle  \|u_0\|_{H^s}+T^{\rho}\left\|u\right\|^2_{L^q_TH_p^s}\|u\|_{L_T^{\infty}H^s}.\\
\end{align*}
Thus, if we choose $R$ and $T_0\leq T$ such that $C\Vert u_0\Vert_{H^s}\leq \frac R2$ and $CT_0^{\rho}R^2\leq \frac 12,$ we have that $\Phi_1$ maps $Y_{T_0,R}^s$ to itself. Now, let $u,v\in Y_{T,R}^s.$ Then, from Proposition \ref{pr1}  we get
\begin{align*}
d(\Phi_1(u),\Phi_1(v))&\apprle \left\|(|x|^{-\lambda}*|u|^{2})u-(|x|^{-\lambda}*|v|^{2})v\right\|_{L_T^{q'}H^s_{p'}}\\
&\apprle \left\|(|x|^{-\lambda}*|u|^{2})(u-v)\right\|_{L_T^{q'}H^s_{p'}}+\left\|(|x|^{-\lambda}*(|u|^{2}-|v|^{2}))v\right\|_{L_T^{q'}H^s_{p'}}.
\end{align*}
Using Lemma \ref{LeibRule}, the Hardy and H\"{o}lder inequalities, we have
\begin{align*}
\left\|(|x|^{-\lambda}*|u|^{2})(u-v)\right\|_{L_T^{q'}H^s_{p'}}&\apprle \left\||x|^{-\lambda}*|u|^{2}\right\|_{L^{q'}_TL_x^{\frac{3n}{\lambda-2s}}}\|u-v\|_{L_T^{\infty}H^s} +
\left\||x|^{-\lambda}*|u|^{2}\right\|_{L^d_TH_{\frac{3n}{2\lambda-s}}^s}\|u-v\|_{L_T^qL_x^b}\\
&\apprle \left\|u\right\|^2_{L^{2q'}_TL_x^b}\|u-v\|_{L_T^{\infty}H^s} +
 \|u\|_{L^{\infty}_TH^s}\|u\|_{L^d_TL_x^b}\|u-v\|_{L_T^qL_x^b}\\
  &\apprle T^{\rho}\left\|u\right\|^2_{L^q_TH_p^s}\|u-v\|_{L_T^{\infty}H^s} +
T^{\rho} \|u\|_{L^{\infty}_TH^s}\|u\|_{L^q_TH^s_p}\|u-v\|_{L_T^qH^s_p}\\
&\apprle T^{\rho}R^2 d(u,v).
 \end{align*}
In a similar way,
\begin{align*}
\left\|(|x|^{-\lambda}*(|u|^{2}-|v|^{2}))v\right\|_{L_T^{q'}H^s_{p'}}\apprle T^{\rho}R^2 d(u,v).
 \end{align*}
Thus, if $T_0\leq T$ is small enough, $\Phi_1$ is a contraction. Consequently, $\Phi_1$ has a unique fixed point at $Y_{T_0,R}^s$ which is solution of (\ref{duhamel0}). The time-continuity of the solution follows in the same spirit of the end of the proof of Theorem \ref{teo1}.
\endproof
\\
\subsection{Local well-posedness with piecewise constant dispersion}
Using the decomposition (\ref{descomp}), for each $t_0$ there exists an integer $m$ such that $t_0\in I_m^1$ or $t_0\in I_m^2,$
with $I_m^1=(m,m+\tau_+]$ and $I_m^2=(m+\tau_+,m+1].$ Without loss of generality we assume that $t_0\in(0,1].$  In this case,
\[t_0\in(0,\tau_+)\cup \{\tau_+\}\cup(\tau_+,1)\cup\{1\}.\]
If $t_0\in(0,\tau_+)$ or $t_0\in(\tau_+,1),$ following the proof of Theorem \ref{teo2}, there exists $T_0>0$ and a unique solution $u\in C([-T_0+t_0,T_0+t_0];H^s(\mathbb{R}^n))\cap L_{T_0}^q(H^s_p(\mathbb{R}^n)),$ with $\max\{0, \frac{\lambda}{2}-2\}\leq s<\frac{\lambda}{2}.$ Analogously, if $t_0=\tau_+,$ considering the function $\beta_1(t)=\beta^+,$ there exists $T^*_1>0$ such that $[-T^*_1+t_0,t_0]\subset (0, \tau_+]$ and a unique solution $u_1\in C([-T^*_1+t_0,t_0];H^s(\mathbb{R}^n))\cap L_{T_0}^q(H^s_p(\mathbb{R}^n)),$ with $\max\{0, \frac{\lambda}{2}-2\}\leq s<\frac{\lambda}{2}.$ On the other hand, by considering the function $\beta_2(t)=-\beta^-,$ there exists $T^*_2>0$ such that $[t_0,t_0+T^*_2]\subset (\tau_+,1]$ and a unique solution $u_2\in C([t_0,t_0+T^*_2];H^s(\mathbb{R}^n))\cap L_T^q(H^s_p(\mathbb{R}^n)),$ with $\max\{0, \frac{\lambda}{2}-2\}\leq s<\frac{\lambda}{2}.$ Thus, defining $T_0=\min\{T^*_1,T^*_2\}$ and
\[
u(t)=\left\{\begin{array}{lr}
        u_1(t)\ \ \ \text{if}\  -T_0+t_0\leq t\leq t_0,\\
        u_2(t)\ \ \ \text{if}\  t_0< t\leq t_0+T_0,
        \end{array}\right.
\]
we have that $u$ solves (\ref{FoSch}) on $[-T_0+t_0,t_0+T_0]$ with $u(t_0) = u_0.$ Thus we have the following result. 
\begin{theo}\label{teo2c}
Let $u_0\in H^s(\mathbb{R}^n)$ with $\max\{0, \frac{\lambda}{2}-2\}\leq s<\frac{\lambda}{2}$ and $0<\lambda<n,$ $n\geq 1.$ Consider $\alpha,\beta$ periodic and piecewise constant as in (\ref{alpha_beta}).  Then there exists $T_0=T_0(\Vert u_0 \Vert_{H^s})\leq T$ and a unique solution $u$ of the Cauchy problem  (\ref{FoSch}) in the class $C([-T_0+t_0,T_0+t_0];H^s(\mathbb{R}^n))\cap L_{T_0}^q(H^s_p(\mathbb{R}^n)).$
%Let $u_0\in H^s(\mathbb{R}^n),$ $s\geq 0,$ and $\frac{2n}{n+4}<\lambda<2.$ Let $\alpha,\beta\in C([-T+t_0,T+t_0];\mathbb{R}).$Then there exists $T^*=T^*(\Vert u_0 \Vert_{H^s})<T$ and a unique solution $u\in C([-T^*+t_0,T^*+t_0];H^s(\mathbb{R}^n)).$
\end{theo}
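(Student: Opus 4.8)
The plan is to reduce Theorem \ref{teo2c} to the already-established continuous case of Theorem \ref{teo2} by localizing in time and exploiting that the piecewise constant dispersions (\ref{alpha_beta}) are genuinely constant, and in particular nonzero, on each subinterval of the decomposition (\ref{descomp}). First I would use (\ref{descomp}) to split $\mathbb{R}$ into the pieces $(m,m+\tau_+]$ and $(m+\tau_+,m+1]$, on each of which $\beta$ takes a single nonzero value $\pm\beta^{\pm}$; subdividing further at the jump $t_+$ of $\alpha$ (and at any additional breakpoints arising when the periods of $\alpha$ and $\beta$ differ) makes both coefficients constant on every resulting subinterval. Since any bounded time interval meets only finitely many such pieces, it suffices to construct the solution on each piece and glue.

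Fixing $t_0$ and assuming without loss of generality $t_0\in(0,1]$, so that $t_0\in(0,\tau_+)\cup\{\tau_+\}\cup(\tau_+,1)\cup\{1\}$, I would distinguish whether $t_0$ lies in the interior of a constancy interval or at a junction. If $t_0\in(0,\tau_+)$ or $t_0\in(\tau_+,1)$, then on a sufficiently short symmetric interval $[-T_0+t_0,T_0+t_0]$ contained in that open piece the coefficients are the single nonzero constants $\beta^+$ (respectively $-\beta^-$) and the corresponding value of $\alpha$, so Theorem \ref{teo2} applies verbatim and yields a unique solution of (\ref{duhamel0}) in $C([-T_0+t_0,T_0+t_0];H^s(\mathbb{R}^n))\cap L_{T_0}^q(H_p^s(\mathbb{R}^n))$. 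The delicate case is a junction such as $t_0=\tau_+$ (or $t_0=1$), where $\beta$ jumps; there I would solve two one-sided problems with frozen constants. Using $\beta_1(t)=\beta^+$ I obtain, by Theorem \ref{teo2}, a backward solution $u_1$ on $[-T_1^{*}+t_0,t_0]\subset(0,\tau_+]$, and using $\beta_2(t)=-\beta^-$ a forward solution $u_2$ on $[t_0,t_0+T_2^{*}]\subset(\tau_+,1]$; setting $T_0=\min\{T_1^{*},T_2^{*}\}$ and defining $u$ to equal $u_1$ for $t\leq t_0$ and $u_2$ for $t>t_0$ produces a function on $[-T_0+t_0,t_0+T_0]$ with $u(t_0)=u_0$.

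It then remains to check that the glued function genuinely solves (\ref{FoSch}) on the full interval and has the required regularity. The Duhamel identity (\ref{duhamel0}) holds piecewise by construction, and since both branches take the value $u_0$ at $t_0$, the integral formula matches across the junction; continuity in time in $H^s$ at $t=t_0$ follows exactly as in the argument for (\ref{lim2}) at the end of Theorem \ref{teo1}. For the space-time regularity I would use that the Strichartz norm is additive over the two subintervals, giving $u\in L_{T_0}^q(H_p^s)$ with $\|u\|_{L_{T_0}^qH_p^s}\leq\|u_1\|_{L^qH_p^s}+\|u_2\|_{L^qH_p^s}<\infty$, and likewise $u\in C([-T_0+t_0,t_0+T_0];H^s(\mathbb{R}^n))$, so that $u$ lies in the combined space underlying $Y_{T_0,R}^s$.

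The main obstacle I anticipate is precisely this junction step: one must verify that the two one-sided fixed points assemble into a single solution that is continuous in $H^s$ across the discontinuity and belongs to the joint Strichartz class, rather than merely solving the equation on each half separately. This is resolved by the matching of initial data at $t_0$ together with the one-sided $H^s$-continuity established for each piece, and by the finiteness of the number of breakpoints in any bounded interval, which guarantees that finitely many such gluings suffice regardless of the relative position of the jump $t_+$ of $\alpha$ and the jump $\tau_+$ of $\beta$.
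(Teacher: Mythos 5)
Your proposal is correct and follows essentially the same route as the paper: decompose via (\ref{descomp}), apply Theorem \ref{teo2} with frozen constant coefficients when $t_0$ lies inside a constancy interval, and at a junction point such as $t_0=\tau_+$ solve the two one-sided problems with $\beta_1=\beta^+$ and $\beta_2=-\beta^-$, glue with $T_0=\min\{T_1^*,T_2^*\}$, and check $H^s$-continuity across the junction as in the proof of (\ref{lim2}). Your additional remarks on subdividing at the jump $t_+$ of $\alpha$ and on the additivity of the Strichartz norm over the two subintervals only make explicit what the paper leaves implicit.
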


\section{Global existence}
The aim of this section is to analyze the global well-posedness of  (\ref{FoSch}).  We prove that the local solution of the initial value problem (\ref{FoSch}), with initial data in $L^2$ and $H^1,$ can be extended to the real line  $\mathbb{R}.$ 

\subsection{Global existence in $L^2(\mathbb{R}^n)$}
In this subsection, we analyze the global existence of solutions for the model (\ref{FoSch}) with $\alpha,\beta$ verifying either 
$\alpha,\beta\in C(\mathbb{R})$  with $\beta(t)\neq 0$, for all $t\in\mathbb{R},$ or $\alpha,\beta$ are periodic piecewise constants. Taking into account the mass conservation $\Vert u(t)\Vert_{L^2}=\Vert u(t_0)\Vert_{L^2}$ and the local theory in $L^2,$ we are able to extend the local solution obtained in Theorem \ref{teo2} globally in time. This is the content of next theorem. 

\begin{theo}\label{Global1}
Let $u_0\in L^2(\mathbb{R}^n)$ and $0<\lambda<\mbox{min}\{n,4\}$. Then, the local solution to the initial value problem (\ref{FoSch})
obtained in Theorems \ref{teo2}, \ref{teo2c} can be extended to the real line  $\mathbb{R}.$
\end{theo}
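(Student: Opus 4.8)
The plan is to combine the conservation of the $L^2$-norm with the fact that, for data in $L^2$, the local existence time furnished by Theorems \ref{teo2} and \ref{teo2c} depends on the data only through $\|u_0\|_{L^2}$, and then to iterate. First I would record the mass conservation law $\|u(t)\|_{L^2}=\|u(t_0)\|_{L^2}$ for the local solution. Formally this follows by multiplying (\ref{FoSch})$_1$ by $\bar u$, integrating over $\mathbb{R}^n$ and taking the imaginary part: the dispersive terms $\alpha(t)\int \Delta u\,\bar u$ and $\beta(t)\int \Delta^2 u\,\bar u$ are real after integrating by parts, and the Hartree term $\theta\int(|x|^{-\lambda}*|u|^2)|u|^2$ is real as well, so that $\frac{d}{dt}\|u(t)\|_{L^2}^2=0$. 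Since this computation is not directly licit for data merely in $L^2$, I would justify it by a standard regularization argument: approximate $u_0$ in $L^2$ by a sequence $u_0^k\in H^s$ with $s\geq \lambda/2$, invoke Theorem \ref{teo1} (resp. \ref{teo1c}) to obtain smoother solutions for which the identity is rigorous, and pass to the limit using the Lipschitz dependence on the data in $L^2$ encoded in the contraction estimate for $d(\Phi_1(u),\Phi_1(v))$ from the proof of Theorem \ref{teo2}, on a common time interval whose length is controlled by the $L^2$-norms only.

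The crucial structural observation is that, taking $s=0$ in Theorem \ref{teo2}, the hypothesis $0<\lambda<\min\{n,4\}$ guarantees $0\in[\max\{0,\lambda/2-2\},\lambda/2)$, and the contraction closes under the single smallness condition $CT_0^{\rho}R^2\leq \tfrac12$ with $R\simeq\|u_0\|_{L^2}$ and $\rho=1-\lambda/4>0$. Hence the existence time can be taken as $T_0=T_0(\|u_0\|_{L^2})=c\,\|u_0\|_{L^2}^{-2/\rho}$, which does not shrink so long as the $L^2$-norm stays controlled. This is precisely the quantity that mass conservation keeps constant.

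I would then extend the solution by iteration. On any bounded interval the continuity of $\beta$ together with $\beta(t)\neq 0$ gives a uniform lower bound $\inf|\beta|\geq \delta_1>0$, so the Strichartz constants of Proposition \ref{pr1}, and with them the step $T_0$, are uniform there. Starting from $t_0$, the local theory produces a solution on $[t_0-T_0,t_0+T_0]$; since $\|u(t_0\pm T_0)\|_{L^2}=\|u_0\|_{L^2}$, restarting the Cauchy problem at the endpoints yields solutions of the same length $T_0$, which glue by uniqueness to a solution on $[t_0-2T_0,t_0+2T_0]$. Iterating forward and backward with the fixed step $T_0$ reaches any prescribed time, so the solution extends to all of $\mathbb{R}$; equivalently, a finite maximal existence time would force $\|u(t)\|_{L^2}\to\infty$ through the blow-up alternative, contradicting mass conservation.

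The step requiring the most care is the piecewise constant case (Theorem \ref{teo2c}): there the local time is additionally constrained to keep the interval inside a single constant-dispersion piece $(m,m+\tau_+]$ or $(m+\tau_+,m+1]$, so the naive step may degenerate near the discontinuity points $m+\tau_+$ and $m+1$. I would treat this exactly as in the proof of Theorem \ref{teo2c}, building the solution on both sides of a discontinuity and gluing at it, which produces forward progress of a definite amount $\min\{T_1^*,T_2^*\}$ that again depends only on $\|u_0\|_{L^2}$ by virtue of mass conservation. Since the discontinuities sit on the periodic lattice determined by $T_1$ and $T_2=1$, only finitely many occur in any bounded time span, so finitely many iterations — interior steps of length $T_0$ interspersed with gluings across discontinuities — cover any prescribed interval and yield the global solution. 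Beyond the regularization needed for the conservation law, the only point to verify is that these gluing steps stay bounded below uniformly, which holds because every restart is performed with data of the same conserved $L^2$-norm.
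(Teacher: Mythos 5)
Your proposal is correct and follows essentially the same route as the paper: mass conservation combined with the observation that the local existence time from Theorems \ref{teo2} and \ref{teo2c} depends only on $\|u_0\|_{L^2}$ (the paper takes $T_0^{1-\lambda/4}=\frac{1}{8C^2\|u_0\|_{L^2}^2}$, matching your $T_0=c\|u_0\|_{L^2}^{-2/\rho}$ with $\rho=1-\lambda/4$), followed by iteration and gluing across the constant-dispersion pieces. The paper states this more tersely (it invokes the conservation law and a "standard continuity argument" without spelling out the regularization or the treatment of the discontinuity points), but the substance is identical.
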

\proof First we consider the case $\alpha,\beta$ are periodic piecewise constants.
Note that in the proof of Theorem \ref{teo2c}, the time existence of the solution $u(t)$ depends only on   $\Vert u_0\Vert_{L^2}.$ More exactly, $T$ can be taken as
\[T_0^{1-\frac{\lambda}{4}}=\frac{1}{8C^2\Vert u_0\Vert_{L^2}^2}.\]
Since $\Vert u(t)\Vert_{X_{T_0,R}^0}\leq C\Vert u_0\Vert_{L^2}$ and $\Vert u(t)\Vert_{L^2}=\Vert u_0\Vert_{L^2}$ for all $t$ on the time-interval of the existence, a standard continuity argument implies that, on each subinterval $I_m^1$ and $I_m^2,$ there exists a solution $u\in L^{\infty}(I_m^{1,2};L^2(\mathbb{R}^n)).$ Considering the union of sub-intervals $I_m,$ we infer the existence of a solution $u\in L^{\infty}(\mathbb{R};L^2(\mathbb{R}^n)).$ The continuity in time is obtained in a similar way as in Theorem \ref{teo1}, therefore there exists a solution $u\in C(\mathbb{R};L^2(\mathbb{R}^n)).$ 

In the case $\alpha,\beta\in C(\mathbb{R})$  with $\beta(t)\neq 0$, for all $t\in\mathbb{R},$ the $L^2$-conservative law and the local theory in $L^2$ provided by Theorem \ref{teo2}, also give the global existence in $L^2.$ 

\endproof
\subsection{Global existence in $H^1(\mathbb{R}^n)$}
If $\alpha$ and $\beta$ are constants, the solution $u$ of (\ref{FoSch}) satisfies the following energy conservation law
\begin{equation}\label{lc}
E(u(t))=-\beta\Vert \Delta u(t)\Vert^2_{L^2}+\alpha\Vert \nabla u(t)\Vert^2_{L^2}-\frac{\theta}{4}\int\int\frac{1}{\vert x-y\vert^{\lambda}}\vert u(t,x)\vert^2\vert u(t,y)\vert^2dxdy.
\end{equation}
Therefore, for some particular signs of $\alpha, \beta, \theta,$  and by using the following generalized Gagliardo-Niremberg inequality
\begin{equation*}
\int_{\mathbb{R}^n}(|x|^{-\lambda}*|u|^2)\vert u\vert^2dx\leq C\left(\int_{\mathbb{R}^n}\vert \nabla u\vert^2\right)^{\lambda/2}\left(\int_{\mathbb{R}^n}\vert u\vert^2\right)^{(4-\lambda)/2},
\end{equation*}
we get the {\it a priori} estimate $\Vert \nabla u\Vert_{L^2(\mathbb{R}^n)}\leq C,$ which implies  
the existence of global solution in $H^1.$ Unfortunately, if $\alpha,\beta$ are not constants, (\ref{lc}) does not hold. However, we are able to obtain existence of global solution in $H^1$ by combining the  $L^2$-conservative law, the local well-posedness in $H^1$ and an argument of blow up alternative. 
%Posteriorly, through an argument of induction and interpolation we can get the global well-posedness in $H^s.$
%Throughout this section we consider the model
%\begin{equation}\label{FoSchb}
%\left\{
%\begin{array}{lc}
%i\partial _{t}u+\alpha(t)\Delta u+\beta(t) \Delta^2  u+\theta|u|^{2}u=0, & x\in \mathbb{R}^{n},\ \ t\in \mathbb{R}, \\
%u(x,t_0)=u_{0}(x), &  x\in \mathbb{R}^{n}, \ \ t_0\in\mathbb{R}.
%\end{array}
%\right.
%\end{equation}

\begin{theo}\label{Global1H1}
Let $u_0\in H^1(\mathbb{R}^n), 0<\lambda<n$ and $\lambda< 4.$ Assume that $\alpha,\beta\in C(\mathbb{R})$  with $\beta(t)\neq 0$, for all $t\in\mathbb{R}.$ Then, the local solution to the initial value problem (\ref{FoSch}) can be extended to $\mathbb{R}.$
\end{theo}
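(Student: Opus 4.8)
The plan is to rule out finite-time blow-up of the $H^1$-norm by a perturbative persistence-of-regularity argument, exploiting that $\lambda<4$ makes the Hartree nonlinearity $L^2$-subcritical. Since no sign hypothesis is placed on $\theta,\alpha,\beta$, no conserved energy is available, so the argument must be based on Strichartz estimates rather than on a conservation law. First I would record the local theory: because $0<\lambda<4$ forces $\max\{0,\lambda/2-2\}=0$, the exponent $s=1$ is admissible in Theorem~\ref{teo1} (if $\lambda\le 2$) or in Theorem~\ref{teo2} (if $2<\lambda<4$), so (\ref{duhamel0}) has a unique maximal solution $u\in C((t_0-T_{\min},t_0+T_{\max});H^1)$ obeying the standard blow-up alternative: $T_{\max}<\infty$ implies $\Vert u(t)\Vert_{H^1}\to\infty$ as $t\uparrow t_0+T_{\max}$. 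By the reversibility (\ref{grup1ba}) it is enough to preclude $T_{\max}<\infty$, which I would do by contradiction.

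Next I would invoke the global $L^2$ theory. Since $\Vert u(t)\Vert_{L^2}=\Vert u_0\Vert_{L^2}=:M$ and $\lambda<\min\{n,4\}$, Theorem~\ref{Global1} gives $u\in C(\mathbb{R};L^2)$; moreover, covering the compact interval $[t_0,t_0+T_{\max}]$ by the finitely many $L^2$-local steps (whose length depends only on $M$) shows that, for a fixed admissible pair $(q,p)$, the Strichartz norm $\Vert u\Vert_{L^q([t_0,t_0+T_{\max}];L^p_x)}$ is finite. This finiteness, together with absolute continuity of the integral, lets me split $[t_0,t_0+T_{\max}]$ into finitely many subintervals $J$ on each of which $\Vert u\Vert_{L^q_J L^p_x}$ is as small as desired.

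The heart of the proof is a persistence estimate on such a subinterval $J=[a,b]$. Using Proposition~\ref{pr1} for the propagator together with the Duhamel formula, Leibniz' rule (Lemma~\ref{LeibRule}), the Hardy--Littlewood--Sobolev (Lemma~\ref{HLSineq}) and Hardy (Lemma~\ref{MaxSti}) inequalities, I would place the single derivative $D=(-\Delta)^{1/2}$ onto exactly one factor of the cubic term and estimate the remaining (derivative-free) Hartree interaction of the other two factors by $L^2$-level Strichartz norms, reaching schematically
\[
\Vert u\Vert_{L^\infty_J H^1\cap L^q_J H^1_p}\le C\,\Vert u(a)\Vert_{H^1}+C\,\Vert u\Vert_{L^q_J L^p_x}^2\,\Vert u\Vert_{L^q_J H^1_p}.
\]
Choosing the partition so fine that $C\Vert u\Vert_{L^q_J L^p_x}^2\le\frac12$ on every $J$ absorbs the last term and gives $\Vert u\Vert_{L^\infty_J H^1}\le 2C\Vert u(a)\Vert_{H^1}$; iterating over the $K$ pieces bounds $\sup_{[t_0,t_0+T_{\max})}\Vert u(t)\Vert_{H^1}$ by $(2C)^{K}\Vert u_0\Vert_{H^1}<\infty$, contradicting the blow-up alternative. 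The same computation backward in time handles $T_{\min}$, so the solution extends to all of $\mathbb{R}$; here the isometry property (\ref{grup2a}) is used repeatedly.

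The step I expect to be the main obstacle is the displayed nonlinear estimate, specifically arranging the cubic term to depend \emph{linearly} on the $H^1$-Strichartz norm while the other two factors are measured in the \emph{derivative-free} $L^2$-Strichartz norm controlled by the conserved mass $M$; this is exactly where $\lambda<4$ (that is, the scaling-critical exponent $s_c=\lambda/2-2$ is negative) is indispensable. That this Strichartz redistribution of the derivative cannot be replaced by a direct energy computation is transparent from scaling: differentiating $\Vert\nabla u\Vert_{L^2}^2$ along the flow (the dispersive terms drop out) and integrating by parts produces a purely nonlinear term of size $\sim M^{2-\lambda}\Vert\nabla u\Vert_{L^2}^{2+\lambda}$, whose exponent $2+\lambda>2$ would only yield possible finite-time blow-up.
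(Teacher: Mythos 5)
Your proposal is correct in outline, but it reaches the $H^1$ bound by a genuinely different mechanism than the paper. Both arguments share the same skeleton: mass conservation, a mass-level spacetime bound $\Vert u\Vert_{L^{q}([t_0,t_0+T_{\mbox{\tiny max}}];L^p_x)}<\infty$ obtained by iterating local steps whose length depends only on $\Vert u_0\Vert_{L^2}$ (this is exactly where admissibility forces $\lambda\le 4$ in both arguments), and a contradiction with the blow-up alternative. Where you differ is in how that $L^2$-level information is upgraded to $H^1$: you run Duhamel at the $H^1$ level, use Lemma \ref{LeibRule} to put the single derivative on one factor, measure the two derivative-free factors in the mass-level Strichartz norm, and close by absorption on subintervals where that norm is small; the paper instead differentiates $\Vert\nabla u(t)\Vert_{L^2}^2$ in time (the dispersive terms drop out since $\alpha,\beta$ are real), estimates the resulting Hartree term by Hardy--Littlewood--Sobolev and H\"{o}lder as $\partial_t\Vert\nabla u\Vert_{L^2}^2\le C\Vert\nabla u\Vert_{L^2}^2\Vert u(t)\Vert_{L^p}^2$ with $\frac2p+\frac{\lambda}{n}=1$, applies Gronwall, and then controls $\int_0^{T_{\mbox{\tiny max}}}\Vert u(\tau)\Vert^2_{L^p}d\tau$ by the same mass-level Strichartz bound. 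Your route avoids Gronwall entirely and is closer to a standard persistence-of-regularity argument; the paper's route avoids the Leibniz-rule bookkeeping at the $H^1$ level (you would still need to pin down the admissible pair, e.g.\ $(q,p)=(16/\lambda,\tfrac{4n}{2n-\lambda})$ makes your schematic estimate consistent, with time-H\"{o}lder closing precisely when $\lambda\le 4$). One correction to your closing remark: you claim the Strichartz redistribution of the derivative \emph{cannot} be replaced by a direct energy computation, because Gagliardo--Nirenberg turns the nonlinear term into $\sim M^{2-\lambda}\Vert\nabla u\Vert_{L^2}^{2+\lambda}$ and the resulting ODE blows up. That only shows the \emph{pointwise-in-time} closure via Gagliardo--Nirenberg fails; the paper's proof is exactly a direct energy computation, rescued by keeping $\Vert u(t)\Vert_{L^p}^2$ as a time-dependent Gronwall coefficient whose time integral is finite by Strichartz. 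So the energy identity is not a dead end --- it simply must be married to the same spacetime bound your argument also relies on.
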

\proof Recall that the $L^2$-solutions $u$ of  (\ref{FoSch})-(\ref{alpha_beta}) satisfies the mass conservation law 
\[\Vert u(t)\Vert_{L^2(\mathbb{R}^n)}=\Vert u_0\Vert_{L^2(\mathbb{R}^n)}.\]
Moreover, the following relation holds
\begin{equation}\label{RelImp1}
\partial_t\Vert \nabla u(t)\Vert_{L^2(\mathbb{R}^n)}^2=-2\theta \mbox{Im}\int_{\mathbb{R}^n}\nabla [|u(x,t)|^{2}u(x,t)]\nabla\overline{u}(x,t)dx.
\end{equation}
Then, if $p\geq 1$ is such that $\frac{2}{p}+\frac{\lambda}{n}=1,$ from (\ref{RelImp1}), Hardy-Littlewood-Sobolev and H\"older inequalities, we get
\begin{align}
\partial_t\Vert \nabla u(t)\Vert_{L^2(\mathbb{R}^n)}^2 &\leq 2\vert \theta\vert \left\vert  \int_{\mathbb{R}^n}\int_{\mathbb{R}^n}\vert x-y\vert^{-\lambda}\nabla\vert u(y)\vert^2u(x)\nabla\bar{u}(x)dydx\right\vert\nonumber\\
&\leq  C\Vert\nabla\vert u\vert^2\Vert_{L^{\frac{2p}{p+2}}(\mathbb{R}^n)}\Vert u \nabla\bar{u}\Vert_{L^{\frac{2p}{p+2}}(\mathbb{R}^n)}\nonumber\\
&\leq  C\Vert\nabla u (t)\Vert^2_{L^2(\mathbb{R}^n)}\Vert u(t)\Vert^2_{L^p(\mathbb{R}^n)}.\label{glo1}
\end{align}
Suppose that $T_{\mbox{\tiny{max}}}<\infty.$ Then, if $\frac{1}{q_1}+\frac{1}{q_2}=\frac{1}{2},$ from (\ref{glo1}) and Gronwall inequality, for $0\leq t<T_{\mbox{\tiny{max}}}$ we have that
\begin{align}\label{GradAprioria}
\Vert \nabla u(t)\Vert_{L^2(\mathbb{R}^n)}^2 &\leq \Vert \nabla u_0\Vert_{L^2(\mathbb{R}^n)}^2\exp(C\int_0^t\Vert u(\tau)\Vert^2_{L^p(\mathbb{R}^n)}d\tau)\nonumber\\
&\leq \Vert \nabla u_0\Vert_{L^2(\mathbb{R}^n)}^2\exp(CT_{\mbox{\tiny{max}}}^{\frac{1}{q_1}}\Vert u\Vert^2_{L^{2q_2}([0,T_{\mbox{\tiny{max}}}];L^p(\mathbb{R}^n)}).
\end{align}
Without loss of generality we consider $t_0=0;$ then, we use the equation
\[u(t)=U(t)u_0+i\theta\int_{0}^{t}U(t,\tau)\{(|x|^{-\lambda}*|u(\tau)|^2)u(\tau)\}d\tau,
\]
in order to obtain an estimate of $\Vert u\Vert_{L^{2q_2}([0,T_{\mbox{\tiny{max}}}];L^p(\mathbb{R}^n))}.$ Indeed, for $0\leq t\leq \tilde{T}_0\leq T_{\mbox{\tiny{max}}},$ we get
\begin{align*}
\Vert u\Vert_{L^{2q_2}([0,\tilde{T}_0];L^p(\mathbb{R}^n))}&\leq \|U(t,0)u_0\|_{L^{2q_2}([0,\tilde{T}_0];L^p(\mathbb{R}^n))}\nonumber\\
&+C\int_{0}^{\tilde{T}_0}\|U(t,\tau)\{(|x|^{-\lambda}*|u(\tau)|^{2})u(\tau)\}
\|_{L^{2q_2}([0,\tilde{T}_0];L^p(\mathbb{R}^n))}d\tau.
\end{align*}
At this point we need to consider that $(2q_2,p)$ is an admissible pair. This condition implies that $q_2=\frac{4}{\lambda}$ and thus, we find the restriction $\lambda\leq 4.$ Therefore, from H\"older inequality and Lemma \ref{HLSineq}, we obtain
\begin{align}\label{IneUL1a}
\| u\|_{L^{\frac{8}{\lambda}}([0,\tilde{T}_0];L^p(\mathbb{R}^n))}&\leq C\|u_0\|_{L^2(\mathbb{R}^n)}+C\int_{0}^{\tilde{T}_0}\|(|x|^{-\lambda}*|u(\tau)|^{2})u(\tau)
\|_{L^2(\mathbb{R}^n)}d\tau\notag\\
 & \leq C\|u_0\|_{L^2(\mathbb{R}^n)}+C\int_{0}^{\tilde{T}_0}\|u(\tau)\|_{L^{p}(\mathbb{R}^n)}\||x|^{-\lambda}*|u(\tau)|^{2}
\|_{L^{\frac{2p}{p-2}}(\mathbb{R}^n)}d\tau\notag\\
&= C\|u_0\|_{L^2(\mathbb{R}^n)}+C\|u_0\|_{L^2(\mathbb{R}^n)}\int_{0}^{\tilde{T}_0}\|u(\tau)\|_{L^{p}(\mathbb{R}^n)}^{2}d\tau\notag\\
& \leq C\|u_0\|_{L^2(\mathbb{R}^n)}+C\|u_0\|_{L^2(\mathbb{R}^n)}\tilde{T}^{\frac{4-\lambda}{4}}_0\|u\|_{L^{\frac{8}{\lambda}}([0,\tilde{T}_0];L^p(\mathbb{R}^n))}^{2}.
\end{align}
We claim that
\[
\| u\|_{L^{\frac{8}{\lambda}}([0,\tilde{T}_0];L^p(\mathbb{R}^n))}\leq 2C \|u_0\|_{L^2(\mathbb{R}^n)},
\]
provided $0<\tilde{T}_0^{\frac{4-\lambda}{4}}<\frac{1}{4C^2\|u_0\|^2_{L^2(\mathbb{R}^n)}}.$ Assume by contradiction that $
\| u\|_{L^{\frac{8}{\lambda}}([0,\tilde{T}_0];L^p(\mathbb{R}^n))} >2C \|u_0\|_{L^2(\mathbb{R}^n)}.$
By continuity there exists $T^*\leq \tilde{T}_0$ such that
\begin{equation}\label{IquaU1a}
\| u\|_{L^{\frac{8}{\lambda}}([0,T^*];L^p(\mathbb{R}^n))}= 2C\|u_0\|_{L^2(\mathbb{R}^n)}.
\end{equation}
Notice that (\ref{IneUL1a}) is also valid for $T^*$ instead of $\tilde{T}_0$. Replacing (\ref{IquaU1a}) in (\ref{IneUL1a}) we get that
\[
2C\|u_0\|_{L^2(\mathbb{R}^n)}\leq C\|u_0\|_{L^2(\mathbb{R}^n)}+C\|u_0\|_{L^2(\mathbb{R}^n)}(T^*)^{\frac{4-\lambda}{4}}4C^2\|u_0\|^2_{L^2(\mathbb{R}^n)},
\]
which implies $1\leq (T^*)^{\frac{4-\lambda}{4}}4C^2\|u_0\|^2_{L^2(\mathbb{R}^n)}\leq  \tilde{T}^{\frac{4-\lambda}{4}}_04C^2\|u_0\|^2_{L^2(\mathbb{R}^n)}$ and this contradicts the choice of $\tilde{T}_0.$ Therefore, considering ${T}_0^{\frac{4-\lambda}{4}}=\frac{1}{8C^2\|u_0\|^2_{L^2(\mathbb{R}^n)}}$ we have
\begin{equation}\label{APriori1a}
\| u\|_{L^{\frac{8}{\lambda}}([0,{T}_0];L^p(\mathbb{R}^n))}\leq 2C \|u_0\|_{L^2(\mathbb{R}^n)}.
\end{equation}
If $T_0=T_{\mbox{\tiny{max}}}$ we finish the proof. Suppose that $T_0<T_{\mbox{\tiny{max}}}.$ Then, we repeat the above argument to obtain {\it a priori} estimate in the interval $[0,2T_0].$ Indeed, from Duhamel's formula we have that
\[u(t)=U(t,T_0)u_0(T_0)+i\theta\int_{T_0}^tU(t,\tau)\{(|x|^{-\lambda}*|u(\tau)|^{2})u(\tau)\}d\tau.\]
For $T_0\leq t\leq T_0+\tilde{T}_1<T_{\mbox{\tiny{max}}},$ from the H\"older inequality and Lemma \ref{HLSineq} 
 we arrived at
\begin{align*}
\| u\|_{L^{\frac{8}{\lambda}}([T_0,T_0+\tilde{T}_1];L^p(\mathbb{R}^n))}&\leq C\|u_0\|_{L^2(\mathbb{R}^n)}+C\int_{T_0}^{t}\|(|x|^{-\lambda}*|u(\tau)|^{2})u(\tau)
\|_{L^2(\mathbb{R}^n)}d\tau\notag\\
 & \leq C\|u_0\|_{L^2(\mathbb{R}^n)}+C\int_{T_0}^{T_0+\tilde{T}_1}\|u(\tau)\|_{L^{p}(\mathbb{R}^n)}^{2}\|u(\tau)
\|_{L^2(\mathbb{R}^n)}d\tau\notag\\
& \leq C\|u_0\|_{L^2(\mathbb{R}^n)}+C\|u_0\|_{L^2(\mathbb{R}^n)}\tilde{T}_1^{\frac{4-\lambda}{4}}\|u\|_{L^{\frac{8}{\lambda}}([T_0,T_0+\tilde{T}_1];L^p(\mathbb{R}^n))}^{2}.
\end{align*}
Again, taking $0<\tilde{T}_1^{\frac{4-\lambda}{4}}<\frac{1}{4C^2\|u_0\|^2_{L^2(\mathbb{R}^n)}},$ we obtain
\begin{equation}\label{APriori2f}
\| u\|_{L^{\frac{8}{\lambda}}([T_0,T_0+\tilde{T}_1];L^p(\mathbb{R}^n))}\leq 2C \|u_0\|^2_{L^2(\mathbb{R}^n)}.
\end{equation}
Therefore, we can chose $\tilde{T}_1=T_0.$ From (\ref{APriori1a}) and (\ref{APriori2f}), we obtain
\[
\| u\|_{L^{\frac{8}{\lambda}}([0,2T_0];L^p(\mathbb{R}^n))}\leq 4C \|u_0\|^2_{L^2(\mathbb{R}^n)}.
\]
Repeating this process a finite number of steps and using the value of $T_0$ we arrived at,
\begin{equation}\label{Apriori3a}
\| u\|_{L^{\frac{8}{\lambda}}([0,T_{\mbox{\tiny{max}}});L^p(\mathbb{R}^n))}\leq C \frac{T_{\mbox{\tiny{max}}}}{T_0}\|u_0\|^2_{L^2(\mathbb{R}^n)}\leq CT_{\mbox{\tiny{max}}}\|u_0\|^2_{L^2(\mathbb{R}^n)}.
\end{equation}
Replacing (\ref{Apriori3a}) in (\ref{GradAprioria}) we get the estimate
\[
\Vert \nabla u(t)\Vert_{L^2(\mathbb{R}^n)}^2\leq \Vert \nabla u_0\Vert_{L^2(\mathbb{R}^n)}^2\exp(CT_{\mbox{\tiny{max}}}^{\frac{4-\lambda}{4}}\|u_0\|^2_{L^2(\mathbb{R}^n)}),
\]
for any $0\leq t<T_{\mbox{\tiny{max}}},$ which is a contradiction to the blow-up alternative. Therefore, $T_{\mbox{\tiny{max}}}=\infty.$

\endproof
\begin{remark}
Combining the arguments in the proof of Theorem \ref{teo1c} with those in the proof of Theorem \ref{Global1H1} we can prove that the local solution to the initial value problem (\ref{FoSch})
obtained in the case $\alpha,\beta$ piecewise constant, can be extended to  $\mathbb{R}.$ 
\end{remark}
\begin{remark}\label{remlwp}
We could try to prove the global existence in $H^s$ combining the local existence in $H^k,$ $k\in \mathbb{N},$ and an interpolation argument. We could use an induction argument on $k$ to prove global existence for initial data in $H^k(\mathbb{R}^n)$ with $k\geq 2$ an integer. For that, we need an {\it a priori} estimate to show that the global existence of (\ref{FoSch}) in $H^{k-1}(\mathbb{R}^n)$ implies the global existence in $H^{k}(\mathbb{R}^n).$ However, if we multiply the first equation in (\ref{FoSch}) by $D_x^{2\alpha}\bar{u},$ where $\alpha$ is a multi-index with $|\alpha|\leq k,$ $k>1,$ next, conjugate (\ref{FoSch}) and multiply it by $D_x^{2\alpha}u,$ and then, we add the two obtained equations and use basic properties of the Laplacian and the operator $\Delta^2,$ we arrived at
\begin{equation}\label{PartImag1z}
\partial_t\Vert D_x^{\alpha} u(t)\Vert_{L_x^2(\mathbb{R}^n)}^2=-2\theta \mbox{Im}\int_{\mathbb{R}^n}D_x^{\alpha}[(|x|^{-\lambda}*|u|^{2})u]D_x^{\alpha}\overline{u}dx.
\end{equation}
By Leibnitz's rule we have that
\begin{eqnarray}
D^{\alpha}_x[(|x|^{-\lambda}*|u|^{2})u]&=&(|x|^{-\lambda}*|u|^{2})D^\alpha_xu+D^\alpha_x(|x|^{-\lambda}*|u|^{2})u\nonumber\\
&&+\sum\limits_{0<\beta< \alpha}\binom{\alpha}{\beta}D^{\beta}_x(|x|^{-\lambda}*|u|^{2})D^{\alpha-\beta}_xu\nonumber\\
&=&(|x|^{-\lambda}*|u|^{2})D^\alpha_xu+(|x|^{-\lambda}*D^\alpha_x(|u|^{2}))u\nonumber\\
&&+\sum\limits_{0<\beta< \alpha}\binom{\alpha}{\beta}(|x|^{-\lambda}*D^{\beta}_x(|u|^{2}))D^{\alpha-\beta}_xu.\nonumber
\end{eqnarray}
%
%
%\[D^{\alpha}_x(u^2\overline{u})=\sum\limits_{\beta\leq \alpha}\binom{\alpha}{\beta}D^{\beta}_x(u^2)D^{\alpha-\beta}_x\overline{u}=\overline{u} D^{\alpha}_x(u^2)+ u^2D^{\alpha}_x(\overline{u})+\sum\limits_{0<\beta< \alpha}\binom{\alpha}{\beta}D^{\beta}_x(u^2)D^{\alpha-\beta}_x\overline{u}\]
%and
%\[\overline{u}D^{\alpha}_x(u^2)=\overline{u}\sum\limits_{\beta\leq \alpha}\binom{\alpha}{\beta}D^{\beta}_x(u)D^{\alpha-\beta}_x u=2|u|^2 D^{\alpha}_x(u)+\overline{u}\sum\limits_{0<\beta< \alpha}\binom{\alpha}{\beta}D^{\beta}_x(u)D^{\alpha-\beta}_x u.\]
Thus, from (\ref{PartImag1z}) we obtain
\begin{align}
\partial_t\Vert D_x^{\alpha} u\Vert_{L_x^2(\mathbb{R}^n)}^2=&-2\theta \mbox{Im}\left( \displaystyle\int_{\mathbb{R}^n}(|x|^{-\lambda}*|u|^{2})D^\alpha_xuD^\alpha_x\overline{u}dx+\int_{\mathbb{R}^n}(|x|^{-\lambda}*D^\alpha_x(|u|^{2}))uD^\alpha_x\overline{u}dx\right)\nonumber\\
&-2\theta \mbox{Im}\left(\displaystyle\sum\limits_{0<\beta< \alpha}\binom{\alpha}{\beta}\displaystyle\int_{\mathbb{R}^n}(|x|^{-\lambda}*D^{\beta}_x(|u|^{2}))D^{\alpha-\beta}_xuD_x^{\alpha}\overline{u}dx\right).\label{deriz}
\end{align}
Unfortunately, seems so difficult to control the right hand side of (\ref{deriz}) in terms of the norms $\|u\|_{H^{1}}$ and $\|u\|_{H^{k-1}}.$
\end{remark}
\subsection{Global well-posedness in $H^s(\mathbb{R}^n)$ with $s>0$ and nonlinearity $\vert u\vert^2u$}
Taking into account the Remark \ref{remlwp}, throughout this section we consider the model
\begin{equation}\label{FoSchb}
\left\{
\begin{array}{lc}
i\partial _{t}u+\alpha(t)\Delta u+\beta(t) \Delta^2  u+\theta|u|^{2}u=0, & x\in \mathbb{R}^{n},\ \ t\in \mathbb{R}, \\
u(x,t_0)=u_{0}(x), &  x\in \mathbb{R}^{n}, \ \ t_0\in\mathbb{R}.
\end{array}
\right.
\end{equation}
In this case, from Duhamel's formula we have,
\begin{equation}\label{duhamelu2u}
u(t)=U(t,t_0)u_0+i\theta\int_{t_0}^tU(t,\tau)|u(\tau)|^{2}u(\tau)d\tau.
\end{equation}
The proof of the next theorem is similar to that one of Theorem \ref{teo2}.
\begin{theo}\label{Theou2u}
Let $n\geq 1,$ $u_0\in H^s(\mathbb{R}^n),$ with $\max\{0, \frac{n}{2}-2\}\leq s<\frac{n}{2}$ and $(q,p)$ the admissible pair $(q,p)=\left(\frac{12}{n-2s}, \frac{6n}{n+4s}\right).$ Consider $\alpha,\beta\in C([-T+t_0,T+t_0])$  with $\beta(t)\neq 0$, for all $t\in[-T+t_0,T+t_0].$ Then there exists $T_0=T_0(\Vert u_0 \Vert_{H^s})\leq T$ and a unique solution $u$ of the integral equation (\ref{duhamelu2u}) in the class $C([-T_0+t_0,T_0+t_0];H^s(\mathbb{R}^n))\cap L_{T_0}^q(H^s_p(\mathbb{R}^n)).$  
\end{theo}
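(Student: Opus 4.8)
The plan is to realize the solution as the unique fixed point of the map
\[
\Phi_1(u)(t)=U(t,t_0)u_0+i\theta\int_{t_0}^tU(t,\tau)|u(\tau)|^{2}u(\tau)\,d\tau
\]
on the ball $Y^s_{T_0,R}$ associated with the admissible pair $(q,p)=\left(\frac{12}{n-2s},\frac{6n}{n+4s}\right)$, following verbatim the Banach fixed point scheme of Theorem~\ref{teo2}. The only structural change is that the Hartree factor $|x|^{-\lambda}*|u|^2$ is replaced by the pointwise product $|u|^2$, so that the Hardy--Littlewood--Sobolev inequality is no longer needed and every spatial splitting is carried out with H\"older's inequality alone. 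First I would apply the Strichartz estimates of Proposition~\ref{pr1} to both the $L^\infty_TH^s$ and the $L^q_TH^s_p$ norms of $\Phi_1(u)$, bounding the free term by $\|u_0\|_{H^s}$ through (\ref{IneLinCont}) and reducing the Duhamel term to the single nonlinear quantity $\big\||u|^2u\big\|_{L^{q'}_TH^s_{p'}}$.

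The heart of the argument is the trilinear estimate
\[
\big\||u|^2u\big\|_{L^{q'}_TH^s_{p'}}\lesssim T^{\rho}\,\|u\|_{L^q_TH^s_p}^2\,\|u\|_{L^\infty_TH^s},\qquad \rho=1+\tfrac{s}{2}-\tfrac{n}{4},
\]
which I would obtain by the fractional Leibniz rule (Lemma~\ref{LeibRule}) applied to the product $|u|^2\cdot u$. When the derivative $D^s$ falls on the last factor $u$, H\"older in $x$ gives $\big\||u|^2\big\|_{L^{3n/(n-2s)}_x}\|u\|_{H^s}=\|u\|_{L^b_x}^2\|u\|_{H^s}$ with $b=\frac{6n}{n-2s}$; when it falls on $|u|^2$, a second application of Leibniz yields a factor $\|u\|_{H^s}\|u\|_{L^b_x}\|u\|_{L^b_x}$. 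In both cases the Sobolev embedding $H^s_p\hookrightarrow L^b$, valid precisely because $\tfrac1b=\tfrac1p-\tfrac{s}{n}$ for the stated pair, converts the $L^b_x$ factors into $H^s_p$ norms, and H\"older in time together with $2q'\le q$ produces the gain $T^{\rho}$; a direct computation of $\tfrac{1}{q'}-\tfrac{2}{q}$ gives exactly $\rho=1+\tfrac{s}{2}-\tfrac{n}{4}$. The condition $s\ge \max\{0,\tfrac n2-2\}$ is exactly what guarantees both $2q'\le q$ and $\rho\ge0$, and this is why the same range of $s$ appears as in Theorem~\ref{teo2} with $\lambda$ formally replaced by $n$.

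With this estimate in hand, choosing $R$ and $T_0\le T$ so that $C\|u_0\|_{H^s}\le R/2$ and $CT_0^{\rho}R^2\le \tfrac12$ makes $\Phi_1$ map $Y^s_{T_0,R}$ into itself; the contraction property follows from the analogous difference estimate $d(\Phi_1(u),\Phi_1(v))\lesssim T_0^{\rho}R^2\,d(u,v)$, obtained by writing $|u|^2u-|v|^2v=|u|^2(u-v)+(|u|^2-|v|^2)v$ and repeating the trilinear bound. The Banach fixed point theorem then yields the unique solution, and the time continuity in $H^s$ is established exactly as at the end of Theorem~\ref{teo1}. The main obstacle I anticipate is the borderline case $\rho=0$, occurring at the lower endpoints $s=\tfrac n2-2$ (for $n\ge5$) and $s=0$ (for $n=4$): there the factor $T_0^{\rho}$ provides no smallness, and one must instead exploit that $\|U(\cdot,t_0)u_0\|_{L^q_TH^s_p}\to0$ as $T\to0$ by dominated convergence, closing the contraction in a small Strichartz ball rather than through the $T_0^{\rho}$ gain. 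Beyond this, the only genuinely computational point is checking the consistency of all the H\"older exponents and of the embedding $H^s_p\hookrightarrow L^b$ for the specific pair $(q,p)$.
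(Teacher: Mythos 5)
Your proposal is correct and follows essentially the same route as the paper's proof of Theorem \ref{Theou2u}: the identical fixed point map, the Strichartz estimates of Proposition \ref{pr1}, the fractional Leibniz rule with the same spatial exponents $\left(\frac{3n}{n-2s}, \frac{3n}{2n-s}, b=\frac{6n}{n-2s}\right)$, the Sobolev embedding $H^s_p\hookrightarrow L^b$, the H\"older-in-time gain $T^{\rho}$ with $\rho=1+\frac{s}{2}-\frac{n}{4}$, and the same contraction and time-continuity arguments. Your additional remark about the endpoint $\rho=0$ (i.e.\ $s=\frac{n}{2}-2$ for $n\geq 5$, or $s=0$ for $n=4$) is well taken: there the paper's choice ``$CT_0^{\rho}R^2\leq \frac12$'' provides no smallness, and your suggested remedy (closing the contraction in a small Strichartz ball using that $\|U(\cdot,t_0)u_0\|_{L^q_TH^s_p}\to 0$ as $T\to 0$) addresses a point the paper glosses over, at the cost that $T_0$ then depends on the profile of $u_0$ rather than only on $\|u_0\|_{H^s}$.
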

\proof Consider the mapping
\[
\Phi_2(u)(t)=U(t,t_0)u_0+i\theta\int_{t_0}^tU(t,\tau)|u(\tau)|^{2})u(\tau)d\tau.
\]
Since $U(t,t_0)$ is unitary in $H^s,$ using Propositions \ref{pr1}, Lemma \ref{LeibRule}, H\"{o}lder inequality and Sobolev embeddings, we obtain
\begin{align*}
\|\Phi_2(u)\|_{L^{\infty}_TH^s}&\leq  \|U(t,t_0)u_0\|_{L^{\infty}_TH^s}+\left\|\theta\int_{t_0}^tU(t,\tau)|u(\tau)|^{2}u(\tau)d\tau\right\|_{L^{\infty}_TH^s}\\
&\apprle \|u_0\|_{H^s}+\left\||u|^{2}u\right\|_{L^{q'}_TH_{p'}^s}\\
&\apprle \|u_0\|_{H^s}+\left\||u|^{2}\right\|_{L^{q'}_TL_x^{\frac{3n}{n-2s}}}\|u\|_{L_T^{\infty}H^s} +
 \left\||u|^{2}\right\|_{L^d_TH_{\frac{3n}{2n-s}}^s}\|u\|_{L_T^qL_x^b}\\
&\apprle \|u_0\|_{H^s}+\left\|u\right\|^2_{L^{2q'}_TL_x^b}\|u\|_{L_T^{\infty}H^s} +
 \left\||u|^{2}\right\|_{L^d_TH_{\frac{3n}{2n-s}}^s}\|u\|_{L_T^qL_x^b}\\
 &\apprle \|u_0\|_{H^s}+\left\|u\right\|^2_{L^{2q'}_TL_x^b}\|u\|_{L_T^{\infty}H^s} +
 \|u\|_{L^{\infty}_TH^s}\|u\|_{L^d_TL_x^b}\|u\|_{L_T^qL_x^b}\\
&\apprle \|u_0\|_{H^s}+\left\|u\right\|^2_{L^{2q'}_TH_p^s}\|u\|_{L_T^{\infty}H^s} +
 \|u\|_{L^{\infty}_TH^s}\|u\|_{L^d_TH_p^s}\|u\|_{L_T^qH_p^s}\\
 &\apprle \|u_0\|_{H^s}+T^{\rho}\left\|u\right\|^2_{L^q_TH_p^s}\|u\|_{L_T^{\infty}H^s},
 \end{align*}
with $\rho=1+\frac{s}{2}-\frac{n}{4},$ $d=\frac{6}{6-n+2s}$  and  $b=\frac{6n}{n-2s}.$ The rest of the proof es very similar to that one in Theorem \ref{teo2}.
\endproof

Next, we will analyze the global well-posedness in $H^s(\mathbb{R}^n)$ with $s\geq0.$ For that, next lemma will be useful. 
\begin{lemm}\label{BilTri1}
\cite{Bona} Let $f,g\in H^r(\mathbb{R}^n),$ with $r>\frac{1}{2}$ and $h\in H^s(\mathbb{R}^n),$ with $0\leq s\leq r.$ Then
\[\|fh\|_{H^s}\leq C\|f\|_{H^r}\|h\|_{H^s}.\]
\end{lemm}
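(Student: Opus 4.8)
The plan is to deduce the estimate from the fractional Leibniz rule of Lemma \ref{LeibRule} combined with Sobolev embeddings. Since $s\geq 0$ one has the equivalence $\|fh\|_{H^s}\apprle \|fh\|_{L^2}+\|D^s(fh)\|_{L^2}$, with $D^s=(-\Delta)^{s/2}$, so it suffices to bound each of the two summands by $\|f\|_{H^r}\|h\|_{H^s}$. The $L^2$-part is immediate: $\|fh\|_{L^2}\leq \|f\|_{L^\infty}\|h\|_{L^2}\apprle \|f\|_{H^r}\|h\|_{H^s}$, where I use $\|h\|_{L^2}\leq\|h\|_{H^s}$ and the Sobolev embedding $H^r(\mathbb{R}^n)\hookrightarrow L^\infty(\mathbb{R}^n)$; this embedding (valid for $r>n/2$, which is precisely $r>\tfrac12$ when $n=1$) is the source of the $\|f\|_{H^r}$ factor throughout.

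For the principal part I would apply Lemma \ref{LeibRule} with target exponent $2$, writing
\[
\|D^s(fh)\|_{L^2}\apprle \|D^s f\|_{L^{p_1}}\|h\|_{L^{q_2}}+\|f\|_{L^{q_1}}\|D^s h\|_{L^{p_2}},
\]
where $\tfrac1{p_1}+\tfrac1{q_2}=\tfrac1{q_1}+\tfrac1{p_2}=\tfrac12$. The second term is handled as above by choosing $q_1=\infty$ and $p_2=2$, so that $\|f\|_{L^\infty}\|D^s h\|_{L^2}\apprle \|f\|_{H^r}\|h\|_{H^s}$. For the first term I would pick $p_1,q_2$ so that the two embeddings $H^r\hookrightarrow W^{s,p_1}$ and $H^s\hookrightarrow L^{q_2}$ both hold, which respectively require $\tfrac1{p_1}\geq\tfrac12-\tfrac{r-s}{n}$ and $\tfrac1{q_2}\geq\tfrac12-\tfrac{s}{n}$; then $\|D^s f\|_{L^{p_1}}\apprle\|f\|_{H^r}$ and $\|h\|_{L^{q_2}}\apprle\|h\|_{H^s}$, which closes the estimate.

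The main obstacle is the exponent bookkeeping in this last step: one must exhibit a single admissible pair $(p_1,q_2)$ with $\tfrac1{p_1}+\tfrac1{q_2}=\tfrac12$ meeting both lower bounds at once. Adding the two required inequalities forces $1-\tfrac rn\leq \tfrac1{p_1}+\tfrac1{q_2}=\tfrac12$, i.e. $r\geq \tfrac n2$, while the hypotheses $0\leq s\leq r$ guarantee that the resulting range $\tfrac12-\tfrac{r-s}{n}\leq \tfrac1{p_1}\leq \tfrac sn$ is nonempty and can be realized with $p_1,q_2$ in the open range demanded by Lemma \ref{LeibRule}; a strict inequality $r>n/2$ (that is, $r>\tfrac12$ for $n=1$) lets one avoid the $L^\infty$ and $L^{q_2}$ endpoint cases. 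An alternative, equally viable route is a Littlewood--Paley/Bony paraproduct decomposition $fh=T_fh+T_hf+R(f,h)$, estimating the low--high, high--low and resonant pieces separately; this produces the same balance condition $r>n/2$ but organizes the frequency interactions more transparently.
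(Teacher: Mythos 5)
There is nothing internal to compare against: the paper does not prove Lemma \ref{BilTri1}, it imports it from Bona--Tzvetkov \cite{Bona}, where it is a one-dimensional statement (whence the hypothesis $r>\frac12$, i.e. $r>\frac n2$ with $n=1$). Your argument is therefore a genuine self-contained proof, and it is correct under the reading $r>\frac n2$: the splitting $\|fh\|_{H^s}\leq C(\|fh\|_{L^2}+\|D^s(fh)\|_{L^2})$, the $L^\infty$ bound via $H^r\hookrightarrow L^\infty$ both for the zero-order piece and for the term carrying $D^sh$ (choosing $(q_1,p_2)=(\infty,2)$, allowed since Lemma \ref{LeibRule} permits $q_i=\infty$), and, for the term carrying $D^sf$, a choice of $(p_1,q_2)$ in the window $\max\{0,\tfrac12-\tfrac{r-s}{n}\}\leq\tfrac1{p_1}\leq\min\{\tfrac12,\tfrac sn\}$, which is nonempty precisely because $r\geq\frac n2$ and $0\leq s\leq r$. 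Two endpoints deserve the explicit sentence you only partly give: when $s=0$ the window forces $p_1=\infty$, which Lemma \ref{LeibRule} forbids, but then $\|fh\|_{H^0}=\|fh\|_{L^2}$ and your elementary bound already finishes; when $s=r$ you are forced to $p_1=2$, $q_2=\infty$, which is admissible since $q_i=\infty$ is allowed and $H^s\hookrightarrow L^\infty$ holds as $s=r>\frac n2$.

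Two further remarks. First, your insistence on $r>\frac n2$ is not pedantry: as literally stated in $\mathbb{R}^n$ with only $r>\frac12$, the lemma is false for $n\geq 2$ (already at $s=0$, the bound $\|fh\|_{L^2}\leq C\|f\|_{H^r}\|h\|_{L^2}$ for all $h\in L^2$ forces $f\in L^\infty$, while $H^r(\mathbb{R}^n)\not\subset L^\infty(\mathbb{R}^n)$ for $r\leq\frac n2$); so the hypothesis must be read as $r>\frac n2$, exactly as you do. Second, for the record, the shortest route to this estimate is neither Kato--Ponce nor paraproducts but interpolation: the map $h\mapsto fh$ is bounded on $L^2$ with norm $\leq C\|f\|_{H^r}$ (by $H^r\hookrightarrow L^\infty$) and bounded on $H^r$ with the same norm (algebra property of $H^r$ for $r>\frac n2$), and complex interpolation gives boundedness on $H^s=[L^2,H^r]_{s/r}$ for the whole range $0\leq s\leq r$. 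Your argument and the paraproduct alternative you sketch reach the same conclusion with somewhat more machinery, but they are sound.
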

\begin{theo}\label{Global1Hs}
Let $u_0\in H^s(\mathbb{R}^n),$  with $s\geq 0,$  $n< 4.$  Assume that $\alpha,\beta\in C(\mathbb{R})$  with $\beta(t)\neq 0$, for all $t\in\mathbb{R}.$ Then the local solution to the initial value problem (\ref{FoSchb})
can be extended to $\mathbb{R}.$
\end{theo}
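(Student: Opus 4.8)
The plan is to globalize in two movements, using that for $n<4$ the cubic problem (\ref{FoSchb}) is $L^2$-subcritical (its scaling exponent is $s_c=\frac n2-2<0$) and that the mass $\|u(t)\|_{L^2}$ is conserved. First I would settle the $L^2$ theory: arguing as in Theorem \ref{Global1}, the local existence time furnished by Theorem \ref{Theou2u} at $s=0$ carries the positive power $T_0^{\rho}$ with $\rho=1-\frac n4>0$ and depends only on $\|u_0\|_{L^2}$, so mass conservation allows reapplying the local result with a uniform step and covering any finite interval. Crucially, since $n\le 3$ the pair $\left(\frac 8n,\infty\right)$ is admissible, so a Strichartz estimate (Proposition \ref{pr1}) applied to (\ref{duhamelu2u}), with the low factors of $|u|^2u$ placed in the already-controlled norms, yields the a priori bound
\[
\int_{0}^{T}\|u(\tau)\|_{L^\infty(\mathbb{R}^n)}^2\,d\tau\;\le\; C\bigl(T,\|u_0\|_{L^2}\bigr)\,T^{1-\frac n4}<\infty
\]
on every finite interval $[0,T]$.

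Next I would run a persistence-of-regularity argument for arbitrary $s\ge 0$. Local existence in $H^s$ is available for the whole range: Theorem \ref{Theou2u} covers $0\le s<\frac n2$, while for $s\ge \frac n2$ a contraction in $C([-T+t_0,t_0+T];H^s)$ closes because Lemma \ref{BilTri1} makes $H^s$ an algebra, giving $\||u|^2u\|_{H^s}\le C\|u\|_{H^s}^3$; in either case the solution extends to a maximal interval $[0,T_{\max})$ on which $\|u(t)\|_{H^s}\to\infty$ if $T_{\max}<\infty$. To exclude this, I would differentiate the $H^s$ norm exactly as in (\ref{RelImp1})--(\ref{PartImag1z}): the skew-adjoint dispersive terms drop out and, by the fractional Leibniz rule (Lemma \ref{LeibRule}),
\[
\partial_t\|u(t)\|_{H^s}^2=-2\theta\,\mathrm{Im}\int_{\mathbb{R}^n}\langle D\rangle^s(|u|^2u)\,\langle D\rangle^s\overline{u}\,dx\;\lesssim\;\|u(t)\|_{L^\infty(\mathbb{R}^n)}^2\,\|u(t)\|_{H^s}^2 .
\]
Gronwall's inequality then gives $\|u(t)\|_{H^s}^2\le \|u_0\|_{H^s}^2\exp\!\bigl(C\int_0^t\|u(\tau)\|_{L^\infty}^2\,d\tau\bigr)$, and the first movement shows the exponent is finite up to $T_{\max}$. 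This contradicts the blow-up alternative, so $T_{\max}=\infty$; the same reasoning runs backwards in time, yielding a global solution.

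The step I expect to be the main obstacle is the a priori control of $\int_0^T\|u\|_{L^\infty}^2\,d\tau$ from merely $L^2$ data, i.e.\ producing an $L^\infty$-in-space, $L^2$-in-time bound at a regularity below the Sobolev threshold $s=\frac n2$. This is exactly where subcriticality enters: the admissibility of a pair embedding into $L^\infty$ (here $p=\infty$, legitimate only because $n\le 3$) together with the surplus time power $1-\frac n4>0$ is what lets the $L^2$-level Strichartz norms close the estimate. A secondary technical point is arranging the nonlinear estimate so that all $s$ derivatives fall on a single factor of $|u|^2u$ while the other two are measured in the controlled $L^\infty$ norm; Lemma \ref{LeibRule} handles this in the energy identity and Lemma \ref{BilTri1} in the local theory above the threshold.
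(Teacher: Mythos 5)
Your proposal is correct, and its backbone coincides with the paper's: both proofs hinge on mass conservation plus the space-time bound in $L^{8/n}_tL^\infty_x$, obtained from Duhamel's formula, the Strichartz estimate for the admissible pair $\left(\tfrac 8n,\infty\right)$ (this is exactly where $n<4$ enters), and a continuity/bootstrap argument iterated over intervals of uniform length fixed by $\Vert u_0\Vert_{L^2}$; this bound is then fed into a Gronwall inequality and played against the blow-up alternative. Where you genuinely diverge is in the treatment of regularities above $H^1$. The paper first closes the $H^1$ case with the gradient identity (\ref{id2}), then runs an induction on integer $k\geq 2$: it differentiates $\Vert D^\alpha_x u\Vert_{L^2}^2$, expands the nonlinearity by the classical Leibniz rule, controls the commutator terms with the bilinear estimate of Lemma \ref{BilTri1} to get $\partial_t\Vert D^\alpha_x u\Vert_{L^2}^2\leq C\Vert u\Vert_{H^{k-1}}^2\Vert D^\alpha u\Vert_{L^2}^2$ (so the Gronwall factor is $\int_0^t\Vert u(\tau)\Vert^2_{H^{k-1}}d\tau$, finite by the induction hypothesis), and finally invokes nonlinear interpolation to reach fractional $s$. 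You instead run a single fractional energy estimate, $\partial_t\Vert u\Vert_{H^s}^2\apprle \Vert u\Vert_{L^\infty}^2\Vert u\Vert_{H^s}^2$, obtained by Cauchy-Schwarz and two applications of the Kato-Ponce rule (Lemma \ref{LeibRule}, with $r=2$, $p_i=2$, $q_i=\infty$), valid uniformly for all $s\geq 0$. This buys two things: the Gronwall factor is always $\int_0^t\Vert u\Vert_{L^\infty}^2d\tau$, already controlled at the $L^2$ level, so no induction hypothesis is needed; and the fractional case requires no interpolation step. It also sidesteps a weak point of the paper's argument, namely that Lemma \ref{BilTri1} is quoted with the one-dimensional threshold $r>\tfrac 12$ and is applied with $H^{\frac12+}$ norms in dimensions $n=2,3$, where the correct algebra threshold is $\tfrac n2$; your use of the algebra property only for $s>\tfrac n2$ is the standard, safe statement. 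Two minor caveats: at the single value $s=\tfrac n2$ neither Theorem \ref{Theou2u} nor the algebra property applies, so that endpoint needs a separate remark (e.g.\ persistence from the $L^2$ theory), and, like the paper, your differentiation of $\Vert u\Vert_{H^s}^2$ for rough solutions is formal and would need a regularization argument to be fully rigorous; since the paper's identities (\ref{id2}) and (\ref{PartImag1}) share exactly this informality, this does not place your proof below the paper's standard.
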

\proof We already to known that the $L^2$-solutions $u$ of  (\ref{FoSchb})  also satisfies the mass conservation law
\begin{equation*}
\Vert u(t)\Vert_{L^2(\mathbb{R}^n)}=\Vert u_0\Vert_{L^2(\mathbb{R}^n)}.
\end{equation*}
Moreover, the following relation holds
\begin{equation}\label{id2}
\partial_t\Vert \nabla u(t)\Vert_{L^2(\mathbb{R}^n)}^2=-2\theta \mbox{Im}\int_{\mathbb{R}^n}\nabla [|u(x,t)|^{2}u(x,t)]\nabla\overline{u}(x,t)dx.
\end{equation}
Let $u_0\in H^1(\mathbb{R}^n)$ and $T_{\mbox{\tiny{max}}}$ be the maximal existence time of the solution to (\ref{FoSchb}). Suppose that $T_{\mbox{\tiny{max}}}<\infty.$ Then, for $0<t<T_{\mbox{\tiny{max}}},$ from the (\ref{id2}) and the H\"older inequality we arrive at
\[
\partial_t\Vert \nabla u(t)\Vert_{L^2(\mathbb{R}^n)}^2 \leq C\Vert u (t)\Vert^2_{L^\infty(\mathbb{R}^n)}\Vert\nabla u(t) \Vert^2_{L^2(\mathbb{R}^n)}.
\]
Thus, if $\frac{1}{q_1}+\frac{1}{q_2}=\frac{1}{2}$ we have
\begin{align}\label{GradApriori}
\Vert \nabla u(t)\Vert_{L^2(\mathbb{R}^n)}^2 &\leq \Vert \nabla u_0\Vert_{L^2(\mathbb{R}^n)}^2\exp(C\int_0^t\Vert u(\tau)\Vert^2_{L^\infty(\mathbb{R}^n)}d\tau)\nonumber\\
&\leq \Vert \nabla u_0\Vert_{L^2(\mathbb{R}^n)}^2\exp(CT_{\mbox{\tiny{max}}}^{\frac{1}{q_1}}\Vert u\Vert^2_{L^{2q_2}([0,T_{\mbox{\tiny{max}}}];L^\infty(\mathbb{R}^n)}).
\end{align}
Without loss of generality we consider $t_0=0;$ then, we use the equation
\[u(t)=U(t)u_0+i\theta\int_{0}^tU(t,\tau)\{|u(\tau)|^{2}u(\tau)\}d\tau,\]
in order to obtain an estimate of $\Vert u\Vert_{L^{2q_2}([0,T_{\mbox{\tiny{max}}}];L^\infty(\mathbb{R}^n))}.$ Indeed,
\[
\Vert u\Vert_{L^{2q_2}([0,\tilde{T}_0];L^\infty(\mathbb{R}^n))}\leq \|U(t,0)u_0\|_{L^{2q_2}([0,\tilde{T}_0];L^\infty(\mathbb{R}^n))}+C\int_{0}^{\tilde{T}_0}\|U(t,\tau)\{|u(\tau)|^{2}u(\tau)\}
\|_{L^{2q_2}([0,\tilde{T}_0];L^\infty(\mathbb{R}^n))}d\tau.
\]
At this point we need to use the inequality (\ref{IneLinCont}), which implies that $q_2=\frac{4}{n}$ and $n< 4.$ Hence,
\begin{align}\label{IneUL1}
\| u\|_{L^{\frac{8}{n}}([0,\tilde{T}_0];L^\infty(\mathbb{R}^n))}&\leq C\|u_0\|_{L^2(\mathbb{R}^n)}+C\int_{0}^{\tilde{T}_0}\||u(\tau)|^{2}u(\tau)
\|_{L^2(\mathbb{R}^n)}d\tau\notag\\
 & \leq C\|u_0\|_{L^2(\mathbb{R}^n)}+C\int_{0}^{\tilde{T}_0}\|u(\tau)\|_{L^{\infty}(\mathbb{R}^n)}^{2}\|u(\tau)
\|_{L^2(\mathbb{R}^n)}d\tau\notag\\
&= C\|u_0\|_{L^2(\mathbb{R}^n)}+C\|u_0\|_{L^2(\mathbb{R}^n)}\int_{0}^{\tilde{T}_0}\|u(\tau)\|_{L^{\infty}(\mathbb{R}^n)}^{2}d\tau\notag\\
& \leq C\|u_0\|_{L^2(\mathbb{R}^n)}+C\|u_0\|_{L^2(\mathbb{R}^n)}\tilde{T}^{\frac{4-n}{4}}_0\|u\|_{L^{\frac{8}{n}}([0,\tilde{T}_0];L^\infty(\mathbb{R}^n))}^{2}.
\end{align}
Now, for  $n=1,2,3,$  we claim that
\begin{equation}\label{APriori1}
\| u\|_{L^{\frac{8}{n}}([0,\tilde{T}_0];L^\infty(\mathbb{R}^n))}\leq 2C \|u_0\|_{L^2(\mathbb{R}^n)},
\end{equation}
provided $0<\tilde{T}_0^{\frac{4-n}{4}}<\frac{1}{4C^2\|u_0\|^2_{L^2(\mathbb{R}^n)}}.$ Assume by contradiction that, $
\| u\|_{L^{\frac{8}{n}}([0,\tilde{T}_0];L^\infty(\mathbb{R}^n))} >2C \|u_0\|_{L^2(\mathbb{R}^n)}.$
By continuity there exists $T^*\leq \tilde{T}_0$ such that
\begin{equation}\label{IquaU1}
\| u\|_{L^{\frac{8}{n}}([0,T^*];L^\infty(\mathbb{R}^n))}= 2C\|u_0\|_{L^2(\mathbb{R}^n)}.
\end{equation}
Notice that (\ref{IneUL1}) is also valid for $T^*$ instead of $\tilde{T}_0$. Replacing (\ref{IquaU1}) in (\ref{IneUL1}) we get that
\[
2C\|u_0\|_{L^2(\mathbb{R}^n)}\leq C\|u_0\|_{L^2(\mathbb{R}^n)}+C\|u_0\|_{L^2(\mathbb{R}^n)}(T^*)^{\frac{4-n}{4}}4C^2\|u_0\|^2_{L^2(\mathbb{R}^n)},
\]
which implies $1\leq (T^*)^{\frac{4-n}{4}}4C^2\|u_0\|^2_{L^2(\mathbb{R}^n)}\leq  \tilde{T}^{\frac{4-n}{4}}_04C^2\|u_0\|^2_{L^2(\mathbb{R}^n)}$ and this contradicts the choice of $\tilde{T}_0.$ Therefore, considering ${T}_0^{\frac{4-n}{4}}=\frac{1}{8C^2\|u_0\|^2_{L^2(\mathbb{R}^n)}}$ we have
\begin{equation}\label{APriori1f}
\| u\|_{L^{\frac{8}{n}}([0,{T}_0];L^\infty(\mathbb{R}^n))}\leq 2C \|u_0\|_{L^2(\mathbb{R}^n)}.
\end{equation}
If $T_0=T_{\mbox{\tiny{max}}}$ we finish the proof. Suppose that $T_0<T_{\mbox{\tiny{max}}}.$ Then, we repeat the above argument to obtain {\it a priori} estimate in the interval $[0,2T_0].$ Indeed, from Duhamel's formula we have that
\[u(t)=U(t,T_0)u_0(T_0)+i\theta\int_{T_0}^tU(t,\tau)\{|u(\tau)|^{2}u(\tau)\}d\tau.\]
For $T_0\leq t\leq T_0+\tilde{T}_1<T_{\mbox{\tiny{max}}},$ from (\ref{IneLinCont}), we arrived at

\begin{align*}
\| u\|_{L^{\frac{8}{n}}([T_0,T_0+\tilde{T}_1];L^\infty(\mathbb{R}^n))}&\leq C\|u_0\|_{L^2(\mathbb{R}^n)}+C\int_{T_0}^{t}\||u(\tau)|^{2}u(\tau)
\|_{L^2(\mathbb{R}^n)}d\tau\notag\\
 & \leq C\|u_0\|_{L^2(\mathbb{R}^n)}+C\int_{T_0}^{T_0+\tilde{T}_1}\|u(\tau)\|_{L^{\infty}(\mathbb{R}^n)}^{2}\|u(\tau)
\|_{L^2(\mathbb{R}^n)}d\tau\notag\\
& \leq C\|u_0\|_{L^2(\mathbb{R}^n)}+C\|u_0\|_{L^2(\mathbb{R}^n)}\tilde{T}_1^{\frac{4-n}{4}}\|u\|_{L^{\frac{8}{n}}([T_0,T_0+\tilde{T}_1];L^\infty(\mathbb{R}^n))}^{2}.
\end{align*}
Again, taking $0<\tilde{T}_1^{\frac{4-n}{4}}<\frac{1}{4C^2\|u_0\|^2_{L^2(\mathbb{R}^n)}},$ we obtain
\begin{equation}\label{APriori2}
\| u\|_{L^{\frac{8}{n}}([T_0,T_0+\tilde{T}_1];L^\infty(\mathbb{R}^n))}\leq 2C \|u_0\|^2_{L^2(\mathbb{R}^n)}.
\end{equation}
Therefore, we can chose $\tilde{T}_1=T_0.$ From (\ref{APriori1}) and (\ref{APriori2}), we obtain
\[
\| u\|_{L^{\frac{8}{n}}([0,2T_0];L^\infty(\mathbb{R}^n))}\leq 4C \|u_0\|^2_{L^2(\mathbb{R}^n)}.
\]
Repeating this process a finite number of steps and using the value of $T_0$ we arrived at,
\begin{equation}\label{Apriori3}
\| u\|_{L^{\frac{8}{n}}([0,T_{\mbox{\tiny{max}}});L^\infty(\mathbb{R}^n))}\leq C \frac{T_{\mbox{\tiny{max}}}}{T_0}\|u_0\|^2_{L^2(\mathbb{R}^n)}\leq CT_{\mbox{\tiny{max}}}\|u_0\|^2_{L^2(\mathbb{R}^n)}.
\end{equation}
Replacing (\ref{Apriori3}) in (\ref{GradApriori}) we get the {\it a priori} estimate,
\[
\Vert \nabla u(t)\Vert_{L^2(\mathbb{R}^n)}^2\leq \Vert \nabla u_0\Vert_{L^2(\mathbb{R}^n)}^2\exp(CT_{\mbox{\tiny{max}}}^{\frac{4-n}{4}}\|u_0\|^2_{L^2(\mathbb{R}^n)}),
\]
for any $0\leq t<T_{\mbox{\tiny{max}}},$ which is a contradiction to the blow-up alternative. Therefore, $T_{\mbox{\tiny{max}}}=\infty.$

Next, we use an induction argument on $k$ to prove global well-posedness for initial data in $H^k(\mathbb{R}^n)$ with $k\geq 2$ an integer. For this we use an {\it a priori} estimate to show that the global well-posedness of (\ref{FoSchb}) in $H^{k-1}(\mathbb{R}^n)$ implies the global well-posedness in $H^{k-1}(\mathbb{R}^n).$

First, multiply equation (\ref{FoSchb}) by $D_x^{2\alpha}\bar{u},$ where $\alpha$ is a multi-index with $|\alpha|\leq k,$ next conjugate (\ref{FoSchb}) and multiply by $D_x^{2\alpha}u,$ add the two equations obtained and from the properties of the Laplacian and the operator $\Delta^2,$ we arrived at

\begin{equation}\label{PartImag1}
\partial_t\Vert D_x^{\alpha} u(t)\Vert_{L_x^2(\mathbb{R}^n)}^2=-2\theta \mbox{Im}\int_{\mathbb{R}^n}D_x^{\alpha}[|u(x,t)|^{2}u(x,t)]D_x^{\alpha}\overline{u}(x,t)dx.
\end{equation}
By Leibnitz's rule we have that
\[D^{\alpha}_x(u^2\overline{u})=\sum\limits_{\beta\leq \alpha}\binom{\alpha}{\beta}D^{\beta}_x(u^2)D^{\alpha-\beta}_x\overline{u}=\overline{u} D^{\alpha}_x(u^2)+ u^2D^{\alpha}_x(\overline{u})+\sum\limits_{0<\beta< \alpha}\binom{\alpha}{\beta}D^{\beta}_x(u^2)D^{\alpha-\beta}_x\overline{u}\]
and
\[\overline{u}D^{\alpha}_x(u^2)=\overline{u}\sum\limits_{\beta\leq \alpha}\binom{\alpha}{\beta}D^{\beta}_x(u)D^{\alpha-\beta}_x u=2|u|^2 D^{\alpha}_x(u)+\overline{u}\sum\limits_{0<\beta< \alpha}\binom{\alpha}{\beta}D^{\beta}_x(u)D^{\alpha-\beta}_x u.\]
Now, from (\ref{PartImag1}) we obtain
\begin{align*}
\partial_t\Vert D_x^{\alpha} u(t)\Vert_{L_x^2(\mathbb{R}^n)}^2=&-2\theta \mbox{Im}\left(\sum\limits_{0<\beta< \alpha}\binom{\alpha}{\beta}\int_{\mathbb{R}^n}\overline{u}D^{\beta}_x u D^{\alpha-\beta}_x uD_x^{\alpha}\overline{u}dx+\int_{\mathbb{R}^n} u^2D^{\alpha}_x \overline{u} D^{\alpha}_x \overline{u}dx\right)\\
&-2\theta \mbox{Im}\left(\sum\limits_{0<\beta< \alpha}\binom{\alpha}{\beta}\int_{\mathbb{R}^n}D^{\beta}_x (u^2) D^{\alpha-\beta}_x \overline{u}D_x^{\alpha}\overline{u}dx\right).
\end{align*}
For $0<\beta<\alpha,$ using Proposition \ref{BilTri1} with $s=0$ we get 
\begin{align}\label{IntIne1}
\left|\int_{\mathbb{R}^n}\overline{u}D^{\beta}_x u D^{\alpha-\beta}_x uD_x^{\alpha}\overline{u}dx\right|&\leq \|D_x^{\alpha}\overline{u}\|_{L^2(\mathbb{R}^n)}\|\overline{u}D^{\beta}_x u D^{\alpha-\beta}_x u\|_{L^2(\mathbb{R}^n)}\notag\\
&\leq C\|D_x^{\alpha}\overline{u}\|_{L^2(\mathbb{R}^n)}\|u\|_{H^{\frac{1}{2}+}}\|D^{\beta}_x u\|_{H^{\frac{1}{2}+}}\|D^{\alpha-\beta}_x u\|_{L^2(\mathbb{R}^n)}\notag\\
&\leq C\|D_x^{\alpha}\overline{u}\|^2_{L^2(\mathbb{R}^n)}\|u\|_{H^{1}}\|u\|_{H^{k-1}}
\leq C\|u\|^2_{H^{k-1}}\|D^{\alpha}u\|^2_{L^2(\mathbb{R}^n)}
\end{align}
For $0<\beta<\alpha,$ using Proposition \ref{BilTri1} with $s=0$ we get
\begin{align}\label{IntIne2}
\left|\int_{\mathbb{R}^n}D^{\beta}_x (u^2) D^{\alpha-\beta}_x \overline{u}D_x^{\alpha}\overline{u}dx\right|&\leq \|D_x^{\alpha}\overline{u}\|_{L^2(\mathbb{R}^n)}\|D^{\beta}_x (u^2) D^{\alpha-\beta}_x \overline{u}\|_{L^2(\mathbb{R}^n)}\notag\\
&\leq C\|D_x^{\alpha}\overline{u}\|_{L^2(\mathbb{R}^n)}\|D^{\beta}_x (u^2)\|_{L^2(\mathbb{R}^n)}\|D^{\alpha-\beta}_x u\|_{H^{\frac{1}{2}+}}\notag\\
&\leq C\|D_x^{\alpha}\overline{u}\|_{L^2(\mathbb{R}^n)}\|u\|^2_{H^{k-1}}\|D^{\alpha-\beta}_x u\|_{H^{\frac{1}{2}+}}\leq C\|u\|^2_{H^{k-1}}\|D^{\alpha}u\|^2_{L^2(\mathbb{R}^n)}.
\end{align}
Finally,
\begin{align}\label{IntIne3}
\left|\int_{\mathbb{R}^n}u^2D^{\alpha}_x \overline{u} D^{\alpha}_x \overline{u}dx\right|&\leq \|u^2\|_{L^{\infty}(\mathbb{R}^n)}\|D^{\alpha}\overline{u}\|^2_{L^2(\mathbb{R}^n)}\leq C\|u\|^2_{H^{1}}\|D^{\alpha}\overline{u}\|^2_{L^2(\mathbb{R}^n)}\leq C\|u\|^2_{H^{k-1}}\|D^{\alpha}u\|^2_{L^2(\mathbb{R}^n)}.
\end{align}
Using (\ref{PartImag1}), (\ref{IntIne1}), (\ref{IntIne2}) and (\ref{IntIne3}), we obtain
\[
\partial_t\Vert D_x^{\alpha} u(t)\Vert_{L^2(\mathbb{R}^n)}^2\leq C\|u(t)\|^2_{H^{k-1}}\|D^{\alpha}u(t)\|^2_{L^2(\mathbb{R}^n)},                      \ \ \text{for}\ \ k\geq 2.
\]
By Gromwall's inequality we get
\[
\Vert D_x^{\alpha} u(t)\Vert_{L^2(\mathbb{R}^n)}^2\leq C \| D_x^{\alpha}u(0)\|^2_{L^2(\mathbb{R}^n)}\exp\left(C\int_0^t\|u(\tau)\|^2_{H^{k-1}}d\tau\right),\ \ \text{for}\ \ k\geq 2.
\]
In order to obtain global well-posedness in the fractional Sobolev space $H^{s}(\mathbb{R}^n),$ with $s>0$ not an integer, a straightforward argument of nonlinear interpolation theory can be used, which finishes the proof of the theorem.
\endproof
\begin{remark}
Combining the arguments in the proof of Theorem \ref{teo1c} with those in the proof of Theorem \ref{Global1Hs} we can prove that the local solution to the initial value problem (\ref{FoSch})
obtained in the case $\alpha,\beta$ piecewise constant (Theorem \ref{teo1c}), can be extended to $\mathbb{R}.$ 
\end{remark}

%%%%%%%%%%%%%%%%%%%%%%%%%%%%%%%%%%%%%%%%%%%%%%%%%%%%%%%%%%%%%%%%%%%%%%%%%%%%%%%
\section{Averaging for fast dispersion with nonlinearity $\vert u\vert^2 u$}
In this section we consider the $\epsilon$-scaled equation (\ref{FoSchb}) and analyze the limit as $\epsilon\rightarrow 0^+$, which is also known as the regime of rapidly varying dispersion. Let $\epsilon>0,$ $\beta_\epsilon(t)=\beta(\frac{t}{\epsilon}),$ $\alpha_\epsilon(t)=\alpha(\frac{t}{\epsilon}).$
For $0<\epsilon$, we consider the rescaled problem
\begin{equation}\label{FoSchb_ep}
\left\{
\begin{array}{lc}
i\partial _{t}u^\epsilon+\alpha_\epsilon(t)\Delta u^\epsilon+\beta_\epsilon(t) \Delta^2u^\epsilon+\theta|u^{\epsilon}|^{2}u^{\epsilon}=0, & x\in \mathbb{R}^{n},\ \ t\in \mathbb{R}, \\
u^\epsilon(x,t_0)=\varphi(x), &  x\in \mathbb{R}^{n}, \ \ t_0\in\mathbb{R}.
\end{array}
\right.
\end{equation}
We want to analyze the behavior of the global solution $u^\epsilon$ of (\ref{FoSchb_ep}) as $\epsilon\rightarrow 0^+$ to
the solution $u^0$ of the averaged problem
\begin{equation}\label{FoSchb_aver}
\left\{
\begin{array}{lc}
i\partial _{t}u^0+m(\alpha)\Delta u^0+m(\beta) \Delta^2u^0+\theta|u^0|^{2}u^0=0, & x\in \mathbb{R}^{n},\ \ t\in \mathbb{R}, \\
u^0(x,t_0)=\varphi(x), &  x\in \mathbb{R}^{n}, \ \ t_0\in\mathbb{R},
\end{array}
\right.
\end{equation}
where $m(\alpha)$ and $m(\beta)$ are the averages given by $m(\alpha)=\frac{1}{T_1}\int_0^{T_1}\alpha(r)dr$ and $m(\beta)=\int_0^{1}\beta(r)dr.$  We have the following result.
\begin{theo}\label{lim1}
Let $\varphi\in H^s(\mathbb{R}^n),$ $s>\frac{n}{2},$ and $u^\epsilon, u^0\in C(\mathbb{R},H^s(\mathbb{R}^n))$ be the global mild solutions (\ref{FoSchb_ep}) and  (\ref{FoSchb_aver}), respectively. Then, for all $T>0,$ we have
\[
\lim_{\epsilon\rightarrow 0^+} \Vert u^\epsilon-u^0\Vert_{L^\infty_TH_x^s}= 0.
\]
\end{theo}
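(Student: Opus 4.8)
The plan is to compare $u^\epsilon$ and $u^0$ through their Duhamel representations (\ref{duhamelu2u}) and to close the estimate by Gronwall's inequality, isolating the whole difficulty in the convergence of the rescaled linear propagator $U_\epsilon(t,\tau):=U_{\alpha_\epsilon,\beta_\epsilon}(t,\tau)$ to the averaged one $U_0(t,\tau):=U_{m(\alpha),m(\beta)}(t,\tau)$. Writing $F(u)=|u|^2u$, and subtracting the two Duhamel formulas, I would decompose
\begin{align*}
u^\epsilon(t)-u^0(t) &= \bigl(U_\epsilon(t,t_0)-U_0(t,t_0)\bigr)\varphi + i\theta\int_{t_0}^t \bigl(U_\epsilon(t,\tau)-U_0(t,\tau)\bigr)F(u^0(\tau))\,d\tau\\
&\quad + i\theta\int_{t_0}^t U_\epsilon(t,\tau)\bigl(F(u^\epsilon(\tau))-F(u^0(\tau))\bigr)\,d\tau\\
&=: I_\epsilon(t)+II_\epsilon(t)+III_\epsilon(t).
\end{align*}
Since $s>\frac n2$, the space $H^s$ is a Banach algebra and $F$ is locally Lipschitz, $\|F(u)-F(v)\|_{H^s}\le C(\|u\|_{H^s}^2+\|v\|_{H^s}^2)\|u-v\|_{H^s}$. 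I would first record a uniform bound $M:=\sup_{0<\epsilon}\|u^\epsilon\|_{L^\infty_T H^s}<\infty$: the global solutions exist by hypothesis, and the a priori estimates preventing blow-up are governed only by the ($\epsilon$-free) nonlinearity and by the dispersive constant of Lemma \ref{VanCorpus1a}, which depends on $\beta_\epsilon$ solely through $\min_t|\beta_\epsilon(t)|=\min_t|\beta(t)|$, unchanged by $t\mapsto t/\epsilon$. Using then that $U_\epsilon$ is an $H^s$-isometry (cf. (\ref{grup2a})), the third term obeys $\|III_\epsilon(t)\|_{H^s}\le CM^2\int_{t_0}^t\|u^\epsilon(\tau)-u^0(\tau)\|_{H^s}\,d\tau$, so once $\eta_\epsilon:=\sup_{|t-t_0|\le T}\bigl(\|I_\epsilon(t)\|_{H^s}+\|II_\epsilon(t)\|_{H^s}\bigr)\to0$ is established, Gronwall yields $\|u^\epsilon-u^0\|_{L^\infty_T H^s}\le \eta_\epsilon\,e^{CM^2T}\to0$.

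The core step is an averaging lemma for the propagators. Writing $\alpha=m(\alpha)+\tilde\alpha$ with $\tilde\alpha$ periodic of mean zero, the change of variables $r=\epsilon\sigma$ gives $A_\epsilon(t,\tau)=m(\alpha)(t-\tau)+\epsilon\bigl(\Psi(t/\epsilon)-\Psi(\tau/\epsilon)\bigr)$ with $\Psi(x)=\int_0^x\tilde\alpha$ periodic and bounded; hence $|A_\epsilon(t,\tau)-m(\alpha)(t-\tau)|\le C\epsilon$ uniformly for $t,\tau\in[-T+t_0,T+t_0]$, and likewise for $B_\epsilon$. On the Fourier side both propagators are multiplication by unimodular symbols, so with $|e^{ia}-e^{ib}|\le\min(2,|a-b|)$,
\[
\bigl|\widehat{(U_\epsilon(t,\tau)-U_0(t,\tau))h}(\xi)\bigr|\le \min\bigl(2,\ C\epsilon(|\xi|^2+|\xi|^4)\bigr)\,|\widehat h(\xi)|.
\]
Splitting the frequency integral at $|\xi|=R$ gives
\[
\|(U_\epsilon(t,\tau)-U_0(t,\tau))h\|_{H^s}^2\le C\epsilon^2(R^2+R^4)^2\|h\|_{H^s}^2 + 4\int_{|\xi|>R}\langle\xi\rangle^{2s}|\widehat h(\xi)|^2\,d\xi,
\]
uniformly in $t,\tau$. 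Given $\delta>0$, I would first take $R$ large to control the $H^s$-tail and then $\epsilon$ small, obtaining $\sup_{t,\tau}\|(U_\epsilon(t,\tau)-U_0(t,\tau))h\|_{H^s}\to0$ for each fixed $h\in H^s$; since these operators are bounded by $2$ in $H^s$, a finite-net argument upgrades this to uniform convergence over $h$ ranging in any compact subset of $H^s$.

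This lemma closes the argument. The term $I_\epsilon$ is the case $h=\varphi$, $\tau=t_0$, so $\sup_t\|I_\epsilon(t)\|_{H^s}\to0$. For $II_\epsilon$, the map $\tau\mapsto F(u^0(\tau))$ is continuous from $[-T+t_0,T+t_0]$ into $H^s$ (the continuous flow $u^0$ composed with the continuous $F$), hence its range is compact in $H^s$; applying the lemma uniformly over this compact set and integrating over an interval of length $\le 2T$ gives $\sup_t\|II_\epsilon(t)\|_{H^s}\to0$. Thus $\eta_\epsilon\to0$ and Gronwall concludes. The hard part is exactly the lemma: the symbol difference loses four derivatives through the factor $|\xi|^2+|\xi|^4$, so no convergence rate in $H^s$ is available for general data; this is precisely what the two-sided bound $\min(2,\cdot)$ and the low/high frequency split circumvent, trading the derivative loss against the $H^s$-tail and requiring only $\varphi\in H^s$ together with the compactness of the nonlinear source $F(u^0(\cdot))$.
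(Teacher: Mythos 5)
Your proof is correct, and its skeleton coincides with the paper's: your $I_\epsilon$, $II_\epsilon$, $III_\epsilon$ are exactly the paper's initial-data term, $I_2^\epsilon$ and $I_1^\epsilon$, and you use the same averaging mechanism, writing $\alpha=m(\alpha)+\tilde\alpha$, $\beta=m(\beta)+\tilde\beta$ with zero-mean periodic parts whose bounded primitives give $|A_\epsilon(t,\tau)-m(\alpha)(t-\tau)|+|B_\epsilon(t,\tau)-m(\beta)(t-\tau)|\le C\epsilon$. The execution, however, differs at three points, each time in a more elementary or more careful direction. (i) Where the paper passes to the limit in the unimodular symbol by dominated convergence (its step (\ref{aver2})), you use the quantitative bound $\min\bigl(2,\,C\epsilon(|\xi|^2+|\xi|^4)\bigr)$ and split frequencies at $|\xi|=R$; the two are equivalent, yours making explicit why the four-derivative loss precludes any rate for general $H^s$ data. (ii) For the source term $II_\epsilon$ the paper only says ``working as in the proof of (\ref{aver2})''; your observation that $\{F(u^0(\tau)):\,|\tau-t_0|\le T\}$ is compact in $H^s$ and that strong convergence of operators with norm bounded by $2$ is uniform on compact sets (finite-net argument) is precisely what is needed to make that terse step rigorous. (iii) For the contraction term you exploit that $H^s$, $s>\frac n2$, is a Banach algebra, obtaining a pointwise-in-time Lipschitz bound closed by classical Gronwall, whereas the paper estimates its $A_1,A_2,A_3$ via Kato's Leibniz rule (Lemma \ref{LeibRule}) in mixed $L^{y_i}_TL^p_x$ norms and closes with Lemma A.1 of Cazenave--Scialom; at this regularity your route loses nothing and is shorter. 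One caveat applies to both proofs: the Gronwall-type closure needs $M=\sup_{0<\epsilon\le 1}\Vert u^\epsilon\Vert_{L^\infty_TH^s_x}<\infty$ \emph{uniformly} in $\epsilon$. The paper uses this silently (its constants $C_1$ and $K$ in (\ref{aver6}) must be $\epsilon$-independent); you at least flag it, and your justification --- that the a priori bounds of Section 5 depend on $\beta_\epsilon$ only through $\min_t|\beta_\epsilon|=\min_t|\beta|$, which is invariant under $t\mapsto t/\epsilon$ --- is consistent with the paper's global theory; alternatively, the standard bootstrap (run the estimate on the maximal subinterval where $\Vert u^\epsilon-u^0\Vert_{H^s}\le 1$, so that $M\le\Vert u^0\Vert_{L^\infty_TH^s}+1$, and conclude this subinterval is everything once $\epsilon$ is small) removes any reliance on uniform global bounds.
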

\begin{proof}
We define the propagator $U_\epsilon$ associated to the fast dispersion functions $\beta_\epsilon$ and $\alpha_\epsilon$ as
\begin{eqnarray*}
U_\epsilon(t,s)f(x)=
(e^{-i\xi^2A_\epsilon(s,t)+i\xi^4B_\epsilon(s,t)}\widehat{f}(\xi))^{\vee}(x),
\end{eqnarray*}
where $A_\epsilon(s,t)=\int_s^t\alpha_\epsilon(r)dr$ and $B_\epsilon(s,t)=\int_s^t\beta_\epsilon(r)dr.$ In addition, the propagator associated to the averaged dispersion $m(\alpha)$ and $m(\beta)$ is given by
\begin{eqnarray*}
U_0(t,s)f(x)=
(e^{-i\xi^2m(\alpha)(t-s)+i\xi^4m(\beta)(t-s)}\widehat{f}(\xi))^{\vee}(x).
\end{eqnarray*}
Considering the integral formulation associated to (\ref{FoSchb_ep}) and  (\ref{FoSchb_aver}) we have
\begin{align}
u^\epsilon(t,x)-u^0(t,x)&=(U_\epsilon(t,t_0)-U_0(t,t_0))\varphi+i\theta\int_{t_0}^tU_\epsilon(t,\tau)(|u^\epsilon(\tau)|^2u^\epsilon(\tau)-|u^0(\tau)|^2u^0(\tau))d\tau\nonumber\\
&-i\int_{t_0}^t(U_\epsilon(t,\tau)-U_0(t,\tau))(|u^0(\tau)|^2)u^0(\tau))d\tau\nonumber\\
&=(U_\epsilon(t,t_0)-U_0(t,t_0))\varphi+I_1^\epsilon(t,x)+I_2^\epsilon(t,x).\label{aver1}
\end{align}
We will analyze the $H^s$-norm of right hand side of (\ref{aver1}). First of all, notice that $\alpha(t)=m(\alpha)+\alpha_0(t),$ and $\beta(t)=m(\beta)+\beta_0(t),$ where $\alpha_0$ and $\beta_0$ have period $T_1$ and $1$ respectively, and zero mean. Therefore, we get
\begin{eqnarray*}
A_\epsilon(s,t)=\int_s^t\alpha_\epsilon(r)dr=m(\alpha)(t-s)+\epsilon A_\epsilon^\alpha(s,t),\ \ B_\epsilon(s,t)=\int_s^t\beta_\epsilon(r)dr=m(\beta)(t-s)+\epsilon B_\epsilon^\beta(s,t),
\end{eqnarray*}
where $A_\epsilon^\alpha(s,t)=\int_{s/\epsilon}^{t/\epsilon}\alpha_0(\tau)d\tau$ and $B_\epsilon^\beta(s,t)=\int_{s/\epsilon}^{t/\epsilon}\beta_0(\tau)d\tau.$ Therefore,
\begin{eqnarray*}
\Vert U_\epsilon(t,t_0)\varphi-U_0(t,t_0)\varphi\Vert_{H^s}=\Vert \langle\xi\rangle^s e^{\{-i\xi^2m(\alpha)(t-s)+i\xi^4m(\beta)(t-s)\}}\times e^{\{-i\epsilon A_\epsilon^\alpha(s,t)+i\epsilon B_\epsilon^\beta(s,t)-1\}}\widehat{\varphi}\Vert_{L^2(\mathbb{R}^n)}.
\end{eqnarray*}
Since $ A_\epsilon^\alpha(s,t),B_\epsilon^\beta(s,t)\in L^\infty,$ then $\lim\limits_{\epsilon\rightarrow 0}\epsilon A_\epsilon^\alpha(s,t)
=0$ and $\lim\limits_{\epsilon\rightarrow 0}\epsilon B_\epsilon^\beta(s,t)
=0.$ Consequently,
\begin{equation}\label{aver2}
\lim_{\epsilon\rightarrow 0}\sup_{t\in \mathbb{R}}\Vert U_\epsilon(t,t_0)\varphi-U_0(t,t_0)\varphi\Vert_{H^s}=0.
 \end{equation}
Now, we bound the terms $I^\epsilon_1,I^\epsilon_2.$ Notice that for $s> \frac{n}{2}$ it holds
\begin{eqnarray*}
\Vert |u^0|^2u^0\Vert_{H^s}\leq C\sup_{t\in [T-t_0,T+t_0]}\Vert u^0\Vert^3_{H^s}<\infty.
\end{eqnarray*}
%\begin{eqnarray*}
%\sup_{t\in [T-t_0,T+t_0]}\Vert I^\epsilon_2(t,x)\Vert_{L^2_x}\leq CT\Vert (U_\epsilon(t,\tau)-U_0(t,\tau))(|u^0|^2)u^0)\Vert_{L^\infty_tL^2_x}.
%\end{eqnarray*}
Therefore, by working as in the proof of (\ref{aver2}) we get
\begin{equation}
\lim_{\epsilon\rightarrow 0}\sup_{t\in [T-t_0,T+t_0]}\Vert I^\epsilon_2(t,x)\Vert_{H^s}=0.
\end{equation}
From (\ref{grup2a}) we have
\begin{align}
\Vert I^\epsilon_1(t,x)\Vert_{H^s} &\leq  \Vert |u^\epsilon|^2u^\epsilon-|u^0|^2u^0\Vert_{L^1_{T}L^2_x}+\Vert D^s[|u^\epsilon|^2u^\epsilon-|u^0|^2u^0]\Vert_{L^1_{T}L^2_x}\nonumber\\
 &\leq  \Vert |u^\epsilon|^2u^\epsilon-|u^0|^2u^0\Vert_{L^1_{T}L^2_x}+\Vert D^s[(|u^\epsilon|^2-|u^0|^2)u^\epsilon]\Vert_{L^1_{T}L^2_x}\nonumber\\
 &+\Vert D^s[|u^0|^2(u^\epsilon-u^0)]\Vert_{L^1_{T}L^2_x}\nonumber\\
 &:= A_1+A_2+A_3.
 \end{align}
 If $s>\frac{n}{2},$ we get
 \begin{align}
A_1&\leq C\sup_{t\in [T-t_0,T+t_0]}(\Vert u^\epsilon\Vert^2_{L^\infty_x}+\Vert u^0\Vert^2_{L^\infty_x})  \Vert u^\epsilon-u^0\Vert_{L^1_{T}L^2_x}\nonumber\\
&\leq C\sup_{t\in [T-t_0,T+t_0]}(\Vert u^\epsilon\Vert^2_{H^s}+\Vert u^0\Vert^2_{H^s})  \Vert u^\epsilon-u^0\Vert_{L^1_{T}L^2_x}.
\end{align}
By considering $\frac{1}{y_1}+\frac{1}{y_2}=1$ and $s>\frac{n}{2},$ we obtain
% $\tilde{p_1}<2$ (which implies that $\lambda<\frac{n(n+8)}{2(n+4)}$) and $y_1=\frac{2\tilde{p}_1}{2-\tilde{p}_1}, s\geq \frac{n}{2}(1-\frac{2}{y_1}),$ we get { *******las condiciones anteriores son para acotar $A_2$ por la norma $H^s$ dela diferencia de $u^\epsilon-u^0.$}
\begin{align}
A_2&\leq C\Vert D^s_x(\vert u^{\epsilon}(\tau)\vert^2-\vert u^0(\tau)\vert^2)\Vert_{L^{y_1}_TL^{2}_x}\Vert u^{\epsilon}\Vert_{L^{y_2}_TL^{\infty}_x}\nonumber\\
&+C\Vert \vert u^{\epsilon}(\tau)\vert^2-\vert u^0(\tau)\vert^2\Vert_{L^{y_1}_TL^{\infty}_x}\Vert D^s_x u^{\epsilon}\Vert_{L^{y_2}_TL^{2}_x}\nonumber\\
&\leq C\Vert u^{\epsilon} (D^s_x(u^{\epsilon}-u^0))\Vert_{L^{y_1}_TL^{2}_x}\Vert u^{\epsilon}\Vert_{L^{y_2}_TL^{\infty}_x}+C\Vert (u^{\epsilon}-u^0) D^s_xu^0\Vert_{L^{y_1}_TL^{2}_x}\Vert u^{\epsilon}\Vert_{L^{y_2}_TL^{\infty}_x}\nonumber\\
&+C\Vert  u^{\epsilon}-u^0\Vert_{L^{2y_1}_TL^{\infty}_x}(\Vert  u^{\epsilon}\Vert_{L^{2y_1}_TL^{\infty}_x}+\Vert u^0\Vert_{L^{2y_1}_TL^{\infty}_x})\Vert D^s_x u^{\epsilon}\Vert_{L^{y_2}_TL^{2}_x}\nonumber\\
&\leq C\Vert D^s_x(u^{\epsilon}-u^0)\Vert_{L^{2y_1}_TL^{2}_x}\Vert u^{\epsilon} \Vert_{L^{2y_1}_TL^{\infty}_x}\Vert u^{\epsilon}\Vert_{L^{y_2}_TL^{\infty}_x}+C\Vert u^{\epsilon}-u^0\Vert_{L^{y_1}_TL^{\infty}_x}\Vert D^s_xu^0\Vert_{L^{\infty}_TL^{2}_x}\Vert u^{\epsilon}\Vert_{L^{y_2}_TL^{\infty}_x}\nonumber\\
&+C\Vert  u^{\epsilon}-u^0\Vert_{L^{2y_1}_TL^{\infty}_x}(\Vert  u^{\epsilon}\Vert_{L^{2y_1}_TL^{\infty}_x}+\Vert u^0\Vert_{L^{2y_1}_TL^{\infty}_x})\Vert D^s_x u^{\epsilon}\Vert_{L^{y_2}_TL^{2}_x}\nonumber\\
&\leq C(\Vert D^s_x(u^{\epsilon}-u^0)\Vert_{L^{2y_1}_TL^{2}_x}+\Vert  u^{\epsilon}-u^0\Vert_{L^{2y_1}_TL^{\infty}_x})\nonumber\\
&\leq C\Vert  u^{\epsilon}-u^0\Vert_{L^{2y_1}_TH^{s}}.
\end{align}
and
\begin{align}
A_3&\leq C\Vert D^s(\vert u^0\vert^2)\Vert_{L^{y_2}_TL^{2}_x}\Vert  u^\epsilon-u^0\Vert_{L^{y_1}_TL^{\infty}_x}+C\Vert \vert u^0\vert^2\Vert_{L^{y_2}_TL^{\infty}_x}\Vert D^s( u^\epsilon-u^0)\Vert_{L^{y_1}_TL^{2}_x}\nonumber\\
&\leq C(\Vert  u^\epsilon-u^0\Vert_{L^{y_1}_TL^{\infty}_x}+\Vert D^s( u^\epsilon-u^0)\Vert_{L^{y_1}_TL^{2}_x})\nonumber\\
&\leq C\Vert  u^\epsilon-u^0\Vert_{L^{y_1}_TH^{s}}.\label{aver5}
\end{align}
Therefore, from (\ref{aver1})-(\ref{aver5}) we get
\begin{eqnarray}
\Vert  u^\epsilon-u^0\Vert_{L^{\infty}_TH^{s}}\leq C_\epsilon+C_1\Vert  u^\epsilon-u^0\Vert_{L^{2y_1}_TH^{s}}.\label{aver6}
\end{eqnarray}
By using the Lemma A.1 in Cazenave and Scialom \cite{caze_scial} and (\ref{aver6}), we get that there exists a positive constant $K=K(C_1,y_1,T)$ such that
\[
\Vert  u^\epsilon-u^0\Vert_{L^{\infty}_TH^{s}}\leq KC_\epsilon\rightarrow 0,\ \mbox{as}\ \epsilon\rightarrow 0^+,
\]
which complete the proof.
\end{proof}

%%%%%%%%%%%%%%%%%%%%%%%%%%AGRADECIMIENTOS%%%%%%%%%%%%%%%%%%%%%%%%%%%%%%%%%%%%%%%
\textbf{Acknowledgements:} The second author was partially supported by Fondo Nacional de Financiamiento para la Ciencia, la Tecnolog\'{\i}a y la Innovaci\'on Francisco Jos\'e de Caldas, contrato Colciencias FP 44842-157-2016.

\end{document}